\newtheorem{theorem}{Theorem}
\newtheorem{lemma}[theorem]{Lemma}
\newtheorem{corollary}[theorem]{Corollary}
\newenvironment{proof}{\noindent{\scshape Proof.}}{\hspace*{2mm}~$\square$}
\newenvironment{demo}[1]{\noindent{\textbf{Proof of #1}}}{\hspace*{2mm}~$\square$}
\newcommand{\Z}{\mathbb{Z}}
\newcommand{\R}{\mathbb{R}}
\newcommand{\N}{\mathbb{N}}
\newcommand{\ind}{\mathbf{1}}
\newcommand{\ep}{\epsilon}
\newcommand{\threshold}{\tau}
\newcommand{\GG}{\mathscr G}
\newcommand{\VG}{\mathscr V}
\newcommand{\EG}{\mathscr E}
\newcommand{\OO}{\Gamma}
\newcommand{\VO}{V}
\newcommand{\EO}{E}
\newcommand{\weight}{W}
\newcommand{\Weight}{\mathbf W}
\newcommand{\radius}{\mathbf{r}}
\newcommand{\diameter}{\mathbf{d}}
\newcommand{\integer}[1]{\lfloor{#1}\rfloor}
\newcommand{\ceil}[1]{\lceil{#1}\rceil}
\DeclareMathOperator{\card}{card \,}
\DeclareMathOperator{\binomial}{Binomial \,}
\DeclareMathOperator{\geometric}{Geometric \,}
\DeclareMathOperator{\hypergeometric}{Hypergeometric \,}
\DeclareMathOperator{\cont}{cont \,}
\DeclareMathOperator{\cent}{center}
\DeclareMathOperator{\reg}{reg}
\begin{document}
\begin{frontmatter}
\title     {Limiting behavior for a general class of \\ voter models with confidence threshold}
\runtitle  {Voter models with confidence threshold}
\author    {Nicolas Lanchier\thanks{Research partially supported by NSF Grant DMS-10-05282.} and
            Stylianos Scarlatos\thanks{Research partially supported through Scholarship of Excellence 2014-2015 jointly by the Greek State Scholarships Foundation (IKY) and Siemens.}}
\runauthor {N. Lanchier and S. Scarlatos}
\address   {School of Mathematical and Statistical Sciences \\ Arizona State University \\ Tempe, AZ 85287, USA.}
\address   {Department of Biomedical Engineering \\ and Computational Science \\ Aalto University \\ Aalto FI-00076, Finland.}

\maketitle

\begin{abstract} \ \
 This article is concerned with a general class of stochastic spatial models for the dynamics of opinions.
 Like in the voter model, individuals are located on the vertex set of a connected graph and update their opinion at a constant rate
 based on the opinion of their neighbors.
 However, unlike in the voter model, the set of opinions is represented by the set of vertices of another connected graph that we
 call the opinion graph:
 when an individual interacts with a neighbor, she imitates this neighbor if and only if the distance between their opinions, defined
 as the graph distance induced by the opinion graph, does not exceed a certain confidence threshold.
 When the confidence threshold is at least equal to the radius of the opinion graph, we prove that the one-dimensional process
 fluctuates and clusters and give a universal lower bound for the probability of consensus of the process on finite connected graphs.
 We also establish a general sufficient condition for fixation of the infinite system based on the structure of the opinion graph,
 which we then significantly improve for opinion graphs which are distance-regular.
 Our general results are used to understand the dynamics of the system for various examples of opinion graphs:
 paths and stars, which are not distance-regular, and cycles, hypercubes and the five Platonic solids, which are distance-regular.
\end{abstract}

\begin{keyword}[class=AMS]
\kwd[Primary]{60K35}
\end{keyword}

\begin{keyword}
\kwd{Interacting particle systems, voter model, opinion dynamics, confidence threshold, annihilating random walks, fluctuation, fixation, distance-regular graphs.}
\end{keyword}

\end{frontmatter}


\section{Introduction}
\label{sec:intro}

\indent Since the work of Arratia~\cite{arratia_1983} on annihilating random walks, it is known that, when starting with infinitely
 many supporters of each opinion, the one-dimensional voter model fluctuates, i.e., the number of opinion changes at each vertex is
 almost surely infinite.
 In contrast, as a consequence of irreducibility, the process on finite connected graphs fixates to a configuration in which all
 the vertices share the same opinion.
 The objective of this paper is to study the dichotomy between fluctuation and fixation for a general class of opinion models
 with confidence threshold.
 The main novelty is to equip the set of opinions with the structure of a connected graph and use the induced graph distance to
 define mathematically a level of disagreement among individuals.
 Based on this modeling approach, some of the most popular models of opinion dynamics can be recovered by choosing the
 structure of the opinion space suitably:
 the constrained voter model, independently introduced in~\cite{itoh_etal_1998, vazquez_krapivsky_redner_2003}, is obtained by assuming
 that the opinion space is a path, while the Axelrod model for the dissemination of cultures~\cite{axelrod_1997} and the discrete
 Deffuant model~\cite{deffuant_al_2000} are closely related to our models when the opinion space is a Hamming graph
 and a hypercube, respectively. \vspace*{8pt}


\noindent{\bf Model description} --
 The class of models considered in this article are examples of interacting particle systems inspired from the voter
 model~\cite{clifford_sudbury_1973, holley_liggett_1975} for the dynamics of opinions.
 Individuals are located on the vertex set of a connected graph and characterized by their opinion, with the set of opinions being
 identified with the vertex set of another connected graph.
 The former graph represents the underlying spatial structure and is used to determine the interaction neighborhood of each individual.
 The latter graph, that we call the opinion graph, represents the structure of the opinion space and is used to determine the distance
 between two opinions and the level of disagreement between two individuals.
 From now on, we call spatial distance the graph distance induced by the spatial structure and opinion distance the graph distance induced
 by the opinion graph.
 Individuals interact with each of their neighbors at rate one.
 As the result of an interaction, an individual imitates her neighbor if and only if the distance between their opinions just before the
 interaction does not exceed some confidence threshold~$\threshold \in \N$.
 More formally, we let
 $$ \begin{array}{rclcl}
    \GG & := & (\VG, \EG) & = & \hbox{the {\bf spatial structure}} \vspace*{2pt} \\
    \OO & := & (\VO, \EO) & = & \hbox{the {\bf opinion graph}} \end{array} $$
 be two connected graphs, where~$\OO$ is also assumed to be finite.
 Then, our opinion model is the continuous-time Markov chain whose state at time~$t$ is a spatial configuration
 $$ \eta_t : \VG \ \longrightarrow \ \VO \quad \hbox{where} \quad \eta_t (x) = \hbox{opinion at~$x \in \VG$ at time~$t$} $$
 and with transition rates at vertex~$x \in \VG$ given by
\begin{equation}
\label{eq:rates}
  \begin{array}{rrl}
      c_{i \to j} (x, \eta) & := & \lim_{h \to 0} \,(1/h) \,P \,(\eta_{t + h} (x) = j \,| \,\eta_t = \eta \ \hbox{and} \ \eta (x) = i) \vspace*{4pt} \\
                            &  = & \card \{y \in N_x : \eta (y) = j \} \ \ind \{d (i, j) \leq \threshold \} \quad \hbox{for all} \quad i, j \in V. \end{array}
\end{equation}
 Here, the set~$N_x$ denotes the interaction neighborhood of vertex~$x$, i.e., all the vertices which are at spatial distance one
 from~$x$, while~$d (i, j)$ refers to the opinion distance between~$i$ and~$j$, which is the length of the shortest path connecting
 both opinions on the opinion graph.
 Note that the classical voter model is simply obtained by assuming that the opinion graph consists of two vertices
 connected by an edge and that the confidence threshold equals one.
 The general class of opinion models described by the transition rates~\eqref{eq:rates} where the opinion space is represented by
 a finite connected graph equipped with its graph distance has been recently introduced in~\cite{scarlatos_2013}. \vspace*{8pt}


\noindent{\bf Main results} --
 The main question about the general model is whether the system fluctuates and clusters, leading ultimately the population to a global
 consensus, or fixates in a highly fragmented configuration.
 Recall that the process is said to
\begin{itemize}
 \item {\bf fluctuate} when~$P \,(\eta_t (x) \ \hbox{changes infinitely often}) = 1$ for all~$x \in \VG$, \vspace*{3pt}
 \item {\bf fixate} when~$P \,(\eta_t (x) \ \hbox{changes a finite number of times}) = 1$ for all~$x \in \VG$, \vspace*{3pt}
 \item {\bf cluster} when~$P \,(\eta_t (x) = \eta_t (y)) \to 1$ as~$t \to \infty$ for all~$x, y \in \VG$.
\end{itemize}
 Note that whether the system fluctuates and clusters or fixates in a fragmented configuration is very sensitive to the initial configuration.
 Also, throughout this paper, we assume that the process starts from a product measure with densities which are constant across space, i.e.,
 $$ \rho_j \ := \ P \,(\eta_0 (x) = j) \quad \hbox{for all} \quad (x, j) \in \VG \times \VO $$
 only depends on opinion~$j$ but not on site~$x$.
 To avoid trivialities, these densities are assumed to be positive.
 Sometimes, we will make the stronger assumption that all the opinions are equally likely at time zero.
 These two hypotheses correspond to the following two conditions:
\begin{align}
 \label{eq:product} \rho_j \ > \ 0  \hspace*{15pt} \quad \hbox{for all} \quad j \in V \vspace*{3pt} \\
 \label{eq:uniform} \rho_j \ = \ F^{-1}            \quad \hbox{for all} \quad j \in V
\end{align}
 where~$F := \card V$ refers to the total number of opinions.
 Key quantities to understand the long-term behavior of the system are the radius and the diameter of the opinion graph defined respectively
 as the minimum and maximum eccentricity of any vertex:
 $$ \begin{array}{rclcl}
        \radius & := & \min_{i \in \VO} \ \max_{j \in \VO} \ d (i, j) & = & \hbox{the {\bf radius} of the graph~$\OO$} \vspace*{3pt} \\
      \diameter & := & \max_{i \in \VO} \ \max_{j \in \VO} \ d (i, j) & = & \hbox{the {\bf diameter} of the graph~$\OO$}. \end{array} $$
 To state our first theorem, we also introduce the subset
\begin{equation}
\label{eq:center}
  C (\OO, \threshold) \ := \ \{i \in \VO : d (i, j) \leq \threshold \ \hbox{for all} \ j \in \VO \}
\end{equation}
 that we shall call the~{\bf $\threshold$-center} of the opinion graph.
 The next result states that, whenever the confidence threshold is at least equal to the radius of the opinion graph, the infinite one-dimensional
 system fluctuates and clusters while the probability that the finite system reaches ultimately a consensus, i.e., fixates in a configuration where
 all the individuals share the same opinion, is bounded from below by a positive constant that does not depend on the size of the spatial structure.
 Here, infinite one-dimensional means that the spatial structure is the graph with vertex set~$\Z$ and where each vertex is connected to its
 two nearest neighbors.
\begin{theorem} --
\label{th:fluctuation}
 Assume~\eqref{eq:product}. Then,
\begin{enumerate}
 \item[a.] the process on~$\Z$ fluctuates whenever
 \begin{equation}
 \label{eq:fluctuation}
    d (i, j) \leq \threshold \quad \hbox{for all} \quad (i, j) \in V_1 \times V_2 \quad \hbox{for some $V$-partition~$\{V_1, V_2 \}$}.
 \end{equation}
\end{enumerate}
 Assume in addition that~$\radius \leq \threshold$. Then,
\begin{enumerate}
 \item[b.] the process on~$\Z$ clusters and \vspace*{3pt}
 \item[c.] the probability of consensus on any finite connected graph satisfies
  $$ \begin{array}{l} P \,(\eta_t \equiv \hbox{constant for some} \ t > 0) \ \geq \ \rho_{\cent} := \sum_{j \in C (\OO, \threshold)} \,\rho_j \ > \ 0. \end{array} $$
\end{enumerate}
\end{theorem}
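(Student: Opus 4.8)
The plan is to expose honest linear voter models inside the dynamics by means of suitable projections, and then to invoke the classical one-dimensional results on fluctuation and clustering. For part~(a), let $\{V_1, V_2\}$ be a $V$-partition for which~\eqref{eq:fluctuation} holds and consider the projected process $\xi_t (x) := \ind \{\eta_t (x) \in V_2 \}$, $x \in \Z$. The first step is to observe that $(\xi_t)$ is itself the voter model on~$\Z$: if $\eta (x) = i \in V_1$, then any neighbour $y$ with opinion $j \in V_2$ satisfies $d (i, j) \leq \threshold$ by~\eqref{eq:fluctuation}, so the update $\eta (x) \to j$ is never suppressed by the confidence threshold, whereas a neighbour with opinion in $V_1$ can never turn $\xi (x)$ from $0$ into $1$; summing over $j \in V_2$ shows that vertex $x$ flips $\xi$ from $0$ to $1$ at rate exactly $\card \{y \in N_x : \xi (y) = 1 \}$, and symmetrically from $1$ to $0$. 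These rates depend on $\eta$ only through $\xi$, so $(\xi_t)$ is an autonomous voter model, started under~\eqref{eq:product} from a product measure with positive density of each state; it therefore fluctuates by Arratia's theorem, and since every change of $\xi_t (x)$ is in particular a change of $\eta_t (x)$, the process $(\eta_t)$ fluctuates as well.

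Assume from now on that $\radius \leq \threshold$, so that $C := C (\OO, \threshold)$ is nonempty. Two remarks prepare parts~(b) and~(c). Repeating the computation above, with $V_2$ replaced by a singleton $\{k\}$ with $k \in C$, or by $C$ itself, and using that $d (k, \cdot) \leq \threshold$ for every $k \in C$ so that no transition into or out of the opinion $k$, or into or out of the set $C$, is ever blocked, one obtains that
\[
  \xi^k_t (x) \ := \ \ind \{\eta_t (x) = k \} \quad (k \in C) \qquad \text{and} \qquad \xi^C_t (x) \ := \ \ind \{\eta_t (x) \in C \}
\]
are voter models on~$\GG$. Moreover, once $\eta_t$ takes its values in $C$ only, any two opinions then present are mutually within~$\threshold$, so from that instant on $(\eta_t)$ evolves exactly as an unconstrained multitype voter model.

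For part~(c), work on a finite connected graph~$\GG$. The voter model $\xi^C_t$ reaches a consensus almost surely, while $M_t := \card \{x \in \VG : \eta_t (x) \in C \}$ is a bounded martingale; optional stopping at the almost surely finite absorption time of $\xi^C_t$ then gives $P \,(\text{all opinions eventually lie in } C) = \rho_{\cent}$. On this event there is a finite time from which all opinions lie in $C$, and from then on $(\eta_t)$ runs as a multitype voter model on a finite connected graph, which reaches a genuine consensus almost surely; hence $P \,(\eta_t \equiv \text{constant for some } t > 0) \geq \rho_{\cent}$, which is positive because every $\rho_j > 0$ and $C \neq \emptyset$.

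For part~(b) it suffices to prove $P \,(\eta_t (x) \neq \eta_t (y)) \to 0$ for all $x, y \in \Z$; since this event forces $\eta_t$ to be non-constant on the finite interval $I$ joining $x$ and $y$, I would bound the probability that some edge of $I$ carries two distinct opinions. If one of those two opinions lies in $C$, say it is~$k$, then $\xi^k_t$ is non-constant on $I$, and since each voter model $\xi^k_t$ with $k \in C$ clusters, this case contributes at most $\sum_{k \in C} P \,(\xi^k_t \text{ non-constant on } I)$, which tends to $0$. Otherwise no edge of $I$ with distinct endpoints has an endpoint carrying an opinion in $C$, so (by connectedness of $I$) no vertex of $I$ carries an opinion in $C$ at all; the remaining, genuinely delicate case is thus that $\eta_t$ is non-constant on a stretch $I$ of opinions lying entirely outside $C$. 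Such a stretch is frozen only at the edges where two opinions at distance exceeding $\threshold$ meet, and part~(a) guarantees that every vertex keeps changing, which should force opinions from $C$ to invade it. I expect the main obstacle to be turning this qualitative statement into a quantitative estimate: the plan is to describe the interfaces between distinct opinions as an interacting particle system --- most interfaces performing random walks, among them those of the voter models $\xi^k$ and $\xi^C$, with occasionally frozen ones --- dominated in an appropriate sense by a system of annihilating random walks into which the frozen interfaces are eventually absorbed, and to conclude that the density of interfaces inside $I$ tends to zero. This interface analysis is the technical heart of~(b).
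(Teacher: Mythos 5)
Parts~(a) and~(c) of your proposal are correct. Part~(a) is essentially the paper's own argument: the projection $\ind\{\eta_t(x)\in V_2\}$ is an autonomous voter model because condition~\eqref{eq:fluctuation} makes every transition across the partition unconstrained, and Arratia's result gives fluctuation. Your part~(c) is a legitimate variant: the paper runs the martingale argument on the full opinion model (showing each $X_t(j)$ is a martingale, characterizing the absorbing states, and applying optional stopping at the absorption time of $\eta_t$), whereas you apply optional stopping to the coupled two-type voter model $\xi^C$ and then observe that, once all opinions lie in the $\threshold$-center, the dynamics is an unconstrained multitype voter model on a finite connected graph, which reaches consensus. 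Both routes work; yours trades the paper's absorbing-state analysis for the standard absorption theory of the finite voter model.

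Part~(b), however, contains a genuine gap, and it is exactly at the technical heart of the theorem. Your reduction is fine up to the point where $\eta_t$ is non-constant on the interval $I$ with no vertex of $I$ carrying a centrist opinion; note that the event ``no centrist opinion on $I$'' does \emph{not} vanish as $t\to\infty$ (by clustering of $\xi^C$ its probability tends to $1-\rho_{\cent}>0$), so everything hinges on showing that disagreements inside such all-extremist stretches die out, i.e., that frozen interfaces disappear. At this point you only offer a plan (``dominated in an appropriate sense by a system of annihilating random walks into which the frozen interfaces are eventually absorbed''), and that domination is neither constructed nor obviously available: frozen interfaces do not move and cannot annihilate on their own, and the only mechanism that destroys them is invasion by centrist opinions, which must be extracted from fluctuation of the coupled voter model for the partition $\{C(\OO,\threshold),B(\OO,\threshold)\}$. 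The paper's actual argument couples $\eta_t$ with the system of piles $\xi_t(e)=d(\eta_t(e-1/2),\eta_t(e+1/2))$, observes from the triangle inequality that particles die but are never born, so that $u(t):=E\,\xi_t(e)$ is nonincreasing, and then shows by a case analysis (frozen piles are eventually reduced because the fluctuating voter model $\xi^C$ brings a centrist opinion to one of their endpoints; active piles perform recurrent random walks, so they eventually collide and either annihilate or create a frozen pile that is then reduced) that every pile almost surely loses a particle, forcing $u(t)\to 0$ and hence the interface density to zero. Your proposal stops short of this quantitative step: you neither prove that each frozen interface is eventually reduced, nor convert such a pointwise statement into decay of the density of interfaces (the role played by the monotone functional $u$). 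Without that, clustering is not established.
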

 We will show that the~$\threshold$-center is nonempty if and only if the threshold is at least equal to the radius so the
 probability of consensus in the last part is indeed positive.
 In fact, except when the threshold is at least equal to the diameter, in which case all three conclusions of the theorem turn out to be trivial,
 when the threshold is at least equal to the radius, both the~$\threshold$-center and its complement are nonempty, and therefore form a partition
 that satisfies~\eqref{eq:fluctuation}.
 In particular, fluctuation also holds when the radius is not more than the threshold.
 We also point out that the last part of the theorem implies that the average domain length in the final absorbing state scales like the population
 size, namely~$\card \VG$.
 This result applies in particular to the constrained voter model where the opinion graph is a path with three vertices interpreted as leftists,
 centrists and rightists, thus contradicting the conjecture on domain length scaling in~\cite{vazquez_krapivsky_redner_2003}.

\indent We now seek for sufficient conditions for fixation of the infinite one-dimensional system, beginning with general opinion graphs.
 At least for the process starting from the uniform product measure, these conditions can be expressed using
 $$ N (\OO, s) \ := \ \card \{(i, j) \in \VO \times \VO : d (i, j) = s \} \quad \hbox{for} \quad s = 1, 2, \ldots, \diameter, $$
 which is the number of pairs of opinions at opinion distance~$s$ of each other.
 In the statement of the next theorem, the function~$\ceil{\,\cdot \,}$ refers to the ceiling function.
\begin{theorem} --
\label{th:fixation}
 For the opinion model on~$\Z$, fixation occurs
\begin{enumerate}
 \item[a.] when~\eqref{eq:uniform} holds and
 \begin{equation}
 \label{eq:th-fixation}
   \begin{array}{l} S (\OO, \threshold) \ := \ \sum_{k > 0} \,((k - 2) \,\sum_{s : \ceil{s / \threshold} = k} \,N (\OO, s)) \ > \ 0, \end{array}
 \end{equation}
 \item[b.] for some initial distributions~\eqref{eq:product} when~$\diameter > 2 \threshold$.
\end{enumerate}
\end{theorem}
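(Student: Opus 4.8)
I would treat the two parts by different mechanisms.

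\medskip

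\emph{Part a: a weight function.} To each edge $e=(x,x+1)$ of $\Z$ attach the opinion gap $d_t(e):=d(\eta_t(x),\eta_t(x+1))$ and consider the functional $W_t:=\sum_e \phi(d_t(e))$ with $\phi(s):=\ceil{s/\threshold}$, the least number of steps of size at most $\threshold$ needed to span opinion-distance $s$. Two elementary observations drive the argument. First, $\phi$ is nondecreasing and subadditive, and the triangle inequality on $\OO$ forces $d(\eta(x-1),\eta(x+1))\le d(\eta(x-1),\eta(x))+d(\eta(x),\eta(x+1))$ whenever $x$ imitates a neighbor, so every elementary transition changes $W$ by a nonpositive amount; on a truncation to a cycle $\Z/2n\Z$ (where there is no boundary leak) this makes $W$ nonincreasing, with $W_0\le 2n\,\phi(\diameter)$, so the number of transitions that \emph{strictly} decrease $W$ is $O(n)$. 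Second, in the limit only a frozen configuration can be reached, and every interface of a frozen configuration is a blocked edge with $\phi\ge 2$; this is the source of the ``$-2$'' in $S(\OO,\threshold)$, which records the per-edge weight already ``spent'' on a frozen interface. Comparing, under the uniform law~\eqref{eq:uniform}, the initial weight density $E[W_0]/2n=F^{-2}\sum_s \phi(s)\,N(\OO,s)$ with the weight density of the absorbing state, the hypothesis $S(\OO,\threshold)>0$ is exactly the inequality that lets one bound the expected number of changes per site uniformly in $n$, after which a standard Borel--Cantelli / finite-speed-of-propagation argument transfers this to $\Z$ and yields that almost surely every site changes only finitely often.

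\medskip

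\emph{Part b: frozen interfaces.} Pick opinions $a,b$ with $d(a,b)=\diameter>2\threshold$. By the triangle inequality no opinion lies within $\threshold$ of both $a$ and $b$, so an edge carrying the pair $\{a,b\}$ can never trigger an imitation across itself, and a maximal run of $a$'s adjacent to a maximal run of $b$'s forms an interface that stays put for as long as the two runs remain pure at that interface. I would take the initial product measure with $\rho_a,\rho_b$ close to $1/2$ and the remaining required positive mass $\ep$ spread thinly over the other opinions, and show that almost surely infinitely many such frozen interfaces on each side of the origin are never destroyed: an impurity sitting among $a$-compatible opinions cannot cross into a $b$-region (that edge is again blocked, by $\diameter>2\threshold$), so impurities perform a branching--coalescing random walk confined between two pure runs, and a Peierls / Borel--Cantelli estimate valid for $\ep$ small shows that only finitely many interfaces near any given site are ever reached by a migrating impurity.

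\medskip

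\emph{Main obstacle.} In part~a the delicate point is the \emph{neutral} transitions, those in which $W$ does not strictly decrease: the voter-model-like wandering of a boundary between two adjacent agreement blocks whose opinions are within $\threshold$ of each other, together with the transitions that create a blocked interface of minimal weight $2$. These are precisely the moves that keep the system fluctuating when $S(\OO,\threshold)\le 0$, so the quantitative argument must show that, when $S(\OO,\threshold)>0$, they cannot sustain infinitely many changes at a fixed site; handling them, and making the estimate survive the passage from the cycle to $\Z$, is the crux. In part~b the obstacle is ruling out impurity cascades that would successively dissolve a positive density of frozen interfaces, and obtaining a Borel--Cantelli bound uniform in the window size is where the estimates have to be pushed hardest.
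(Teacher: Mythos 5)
Your part~a rests on the monotone functional $W_t=\sum_e\ceil{\xi_t(e)/\threshold}$, and while it is true that subadditivity of the ceiling makes $W$ nonincreasing, this observation cannot by itself yield fixation: the bound of $O(n)$ strictly decreasing transitions on a cycle holds for \emph{every} opinion graph and threshold, including the plain voter model, which fluctuates. The flips at a fixed site are produced precisely by the neutral moves (active piles performing symmetric random walks), which you yourself identify as the crux but for which you supply no mechanism; the claim that $S(\OO,\threshold)>0$ ``is exactly the inequality that lets one bound the expected number of changes per site'' is asserted, not derived, and your weight $\phi(s)=\ceil{s/\threshold}$ does not encode the accounting that produces the $k-2$ (in the paper, $-1$ is the supply carried by each active pile and $k-1$ is the minimal number of active piles that must be sacrificed to free a frozen pile of order $k$). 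The paper's actual mechanism is different: following Bramson--Griffeath, if the origin flips infinitely often it is reached by active paths starting left of $-N$ for all $N$; on that event, every frozen pile in the interval spanned by the extremal active paths must be turned active using only active piles initially inside that interval (active piles from outside cannot enter, by extremality of the paths), so $\sum_e\cont(e\,|\,T_e<\infty)\le 0$ over some interval $(l,r)$ with $l<-N$, $r\ge 0$, where $\cont$ equals $-1$ for active piles and is at least $k-2$ for frozen piles of order $k$. Large deviation estimates --- which also have to deal with the fact that the initial pile sizes at neighboring edges are \emph{not} independent, handled in the paper through the changeover lemma --- show this event has probability at most $e^{-c(r-l)}$ whenever the mean contribution per edge, $F^{-2}S(\OO,\threshold)$, is positive; summing over $l,r$ and applying the fixation criterion (Lemma~\ref{lem:fixation-condition}) completes the proof. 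None of this space-time accounting appears in your sketch, so part~a has a genuine gap: the step from ``$W$ is nonincreasing and $S>0$'' to ``each site flips finitely often'' is missing and cannot be filled by the Lyapunov argument alone.

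For part~b the paper does not argue via surviving interfaces at all: it reuses the same expected-weight criterion (Lemma~\ref{lem:expected-weight}) for the product measure with $\rho_{i_-}=\rho_{i_+}=X$ close to $1/2$ on a diametrical pair and small positive densities elsewhere, observes that the expected weight is a polynomial in the densities which equals $1/2$ at $(X,Y)=(1/2,0)$ because $d(i_-,i_+)>2\threshold$ forces weight at least $3-2=1$, and concludes by continuity. Your direct blocking argument starts from the correct observation that no opinion is within $\threshold$ of both ends of a diametrical pair, but an $\{a,b\}$ interface need not be breached by an impurity ``crossing'': it can be eroded from its own side, e.g.\ the boundary $a$ imitates an adjacent impurity $c$ with $d(a,c)\le\threshold$, and then $c$ imitates a further impurity $c'$ with $d(c,c')\le\threshold$ and $d(c',b)\le\threshold$, at which point the interface edge is active. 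Excluding such impurity chains for small $\ep$ is exactly the Peierls-type estimate you defer, so this part is also incomplete as written, and in any case it is a genuinely different and substantially harder route than the paper's short perturbation-plus-continuity argument.
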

 Combining Theorems~\ref{th:fluctuation}.a and~\ref{th:fixation}.b shows that these two results are sharp when~$\diameter = 2 \radius$,
 which holds for opinion graphs such as paths and stars:
 for such graphs, the one-dimensional system fluctuates starting from any initial distribution~\eqref{eq:product}
 if and only if~$\radius \leq \threshold$.

\indent Our last theorem, which is also the most challenging result of this paper, gives a significant improvement of the previous condition for fixation for {\bf distance-regular} opinion graphs.
 This class of graphs is defined mathematically as follows: let
 $$ \OO_s (i) \ := \ \{j \in \VO : d (i, j) = s \} \quad \hbox{for} \quad s = 0, 1, \ldots, \diameter $$
 be the {\bf distance partition} of the vertex set~$\VO$ for some~$i \in \VO$.
 Then, the opinion graph is said to be a distance-regular graph when the so-called {\bf intersection numbers}
\begin{equation}
\label{eq:dist-reg-1}
  \begin{array}{rrl}
      N (\OO, (i_-, s_-), (i_+, s_+)) & := & \card (\OO_{s_-} (i_-) \cap \OO_{s_+} (i_+)) \vspace*{3pt} \\
                                      &  = & \card \{j \in \VO : d (i_-, j) = s_- \ \hbox{and} \ d (i_+, j) = s_+ \} \vspace*{3pt} \\
                                      &  = & f (s_-, s_+, d (i_-, i_+)) \end{array}
\end{equation}
 only depend on the distance~$d (i_-, i_+)$ but not on the particular choice of~$i_-$ and~$i_+$.
 This implies that, for distance-regular opinion graphs, the number of vertices
 $$ N (\OO, (i, s)) \ := \ \card (\OO_s (i)) \ = \ f (s, s, 0) \ =: \ h (s) $$
 does not depend on vertex~$i$.
 To state our last theorem, we let
 $$ \begin{array}{l} \Weight (k) \ := \ - 1 + \sum_{1 < n \leq k} \,\sum_{n \leq m \leq \ceil{\diameter / \threshold}} \,(q_n \,q_{n + 1} \cdots q_{m - 1}) / (p_n \,p_{n + 1} \cdots p_m) \end{array} $$
 where by convention an empty sum is equal to zero and an empty product is equal to one, and where the coefficients~$p_n$ and~$q_n$ are defined in terms of the intersection numbers as
 $$ \begin{array}{rcl}
      p_n & := & \max \,\{\sum_{s : \ceil{s / \threshold} = n - 1} f (s_-, s_+, s) / h (s_+) : \ceil{s_- / \threshold} = 1 \ \hbox{and} \ \ceil{s_+ / \threshold} = n \} \vspace*{3pt} \\
      q_n & := & \,\min \,\{\sum_{s : \ceil{s / \threshold} = n + 1} f (s_-, s_+, s) / h (s_+) : \ceil{s_- / \threshold} = 1 \ \hbox{and} \ \ceil{s_+ / \threshold} = n \}. \end{array} $$
 Then, we have the following sufficient condition for fixation.
\begin{theorem} --
\label{th:dist-reg}
 Assume~\eqref{eq:uniform} and~\eqref{eq:dist-reg-1}.
 Then, the process on~$\Z$ fixates when
\begin{equation}
\label{eq:th-dist-reg}
  \begin{array}{l} S_{\reg} (\OO, \threshold) \ := \ \sum_{k > 0} \,(\Weight (k) \,\sum_{s : \ceil{s / \threshold} = k} \,h (s)) \ > \ 0. \end{array}
\end{equation}
\end{theorem}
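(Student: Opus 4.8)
The plan is to establish fixation through an energy (Lyapunov) argument, combined with a translation-invariance averaging device, in the spirit of the proof of Theorem~\ref{th:fixation}.a but with a weight function calibrated to the intersection numbers of the opinion graph. For each edge $\{x, x + 1\}$ of $\Z$ and time $t$, call $\ell_t (x) := \ceil{d (\eta_t (x), \eta_t (x + 1)) / \threshold}$ the \emph{level} of that edge: level $0$ means the two opinions agree, level $1$ means they can still imitate one another, and level $\geq 2$ means the edge is ``blocked.'' Since an update changes an opinion only when it occurs across a level-$1$ edge, and such an update at a site $x$ modifies exactly the two edges incident to $x$ — the level-$1$ edge used, which drops to level $0$, and the edge on the other side of $x$, whose level changes by at most one by the triangle inequality (the opinion at $x$ moves by opinion-distance at most $\threshold$) — the natural functional on a finite window $[a, b] \cap \Z$ is
$$ \mathcal E_t^{[a, b]} \ := \ \sum_{a \leq x < b} \Weight (\ell_t (x)), \qquad \Weight (0) := \hbox{a fixed negative constant}. $$

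I would first record the algebraic fact that the function $\Weight$ of the statement is exactly the solution of the backward recursion
$$ p_n \,(\Weight (n) - \Weight (n - 1)) \ = \ 1 + q_n \,(\Weight (n + 1) - \Weight (n)), \qquad 2 \leq n \leq \ceil{\diameter / \threshold}, $$
started from $\Weight (1) = -1$ with the natural boundary condition at the top level (automatic since the sum defining $q_{\ceil{\diameter / \threshold}}$ is empty): unwinding it reproduces $\Weight (k) = -1 + \sum_{1 < n \leq k} \sum_{n \leq m \leq \ceil{\diameter / \threshold}} (q_n \cdots q_{m - 1}) / (p_n \cdots p_m)$. In particular $\Weight$ is nondecreasing and bounded, so $\mathcal E_t^{[a, b]} \geq - M (b - a)$ for a constant $M$ independent of the window, while under~\eqref{eq:uniform} the quantity $E \,[\mathcal E_0^{[a, b]}]$ equals a fixed multiple of $b - a$.

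The core of the proof — and the step I expect to be the main obstacle — is the drift estimate: for the law of the process at any time, an update at an interior site across a level-$1$ edge should change $\mathcal E_t^{[a, b]}$ by at most $-c$ in expectation, for some fixed $c > 0$, whereas updates at the $O(1)$ sites near the endpoints $a$ and $b$ change it by only a bounded amount (hence contribute $O(t)$ over $[0, t]$). When site $x$ copies across a level-$1$ edge and $\{x, x + 1\}$ sits at level $n$, the energy change is $(\Weight (0) - \Weight (1)) + (\Weight (\ell') - \Weight (n))$, where the new level $\ell'$ lies in $\{n - 1, n, n + 1\}$; by distance-regularity, the probabilities that $\ell'$ equals $n - 1$ or $n + 1$ are expressible through ratios of the intersection numbers $f (\cdot, \cdot, \cdot)$ and $h (\cdot)$. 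The delicate points are that one only controls the \emph{levels} — not the exact opinion distances — of the two edges at $x$, and that at positive times the law of the process is no longer a product measure, so the opinion being copied need not be uniform on the relevant sphere. Both are handled by bounding the true probability of a downward level transition from below by $p_n$ and that of an upward transition from above by $q_n$, these being the corresponding extremal intersection-number expressions over all admissible exact distances; the recursion is then precisely what makes $(\Weight (0) - \Weight (1)) - p_n \,(\Weight (n) - \Weight (n - 1)) + q_n \,(\Weight (n + 1) - \Weight (n)) = \Weight (0) < 0$, so one may take $c = - \Weight (0)$. This is also the point at which the hypothesis $S_{\reg} (\OO, \threshold) > 0$ is used: it is what ensures that the weight function produced by the recursion has the global behavior under which this bound is valid, and it rules out the degenerate regime $\ceil{\diameter / \threshold} = 1$, which instead falls under Theorem~\ref{th:fluctuation}.

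Granting the drift estimate, a standard compensator (Dynkin) argument gives
$$ c \cdot E \,[\,\#\{\hbox{opinion changes at interior sites of } [a, b] \hbox{ before time } t\}\,] \ \leq \ E \,[\mathcal E_0^{[a, b]}] - E \,[\mathcal E_t^{[a, b]}] + C t \ \leq \ 2 M (b - a) + C t. $$
By translation invariance the left-hand side equals $c \,(b - a - 1) \,E \,[\#\{\hbox{opinion changes at the origin before time } t\}]$; dividing by $b - a - 1$ and letting $b - a \to \infty$ kills the term $C t / (b - a - 1)$ and leaves a bound on $E \,[\#\{\hbox{opinion changes at the origin before time } t\}]$ that does not depend on $t$. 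By monotone convergence the opinion at the origin changes only finitely often almost surely, and by translation invariance this holds simultaneously at every vertex of $\Z$, which is exactly fixation.
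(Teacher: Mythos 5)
The central drift estimate is false, and with it the whole Lyapunov scheme. You claim that every update across a level-$1$ edge decreases the expected energy by at least some fixed $c > 0$, and you then convert the total available energy drop, at most $2M(b-a)$, into a bound on the number of opinion changes. But when the second edge incident to the updated site is at level $0$, the update is simply a translation of an active pile: the edge that is used drops from level $1$ to level $0$ while the previously empty edge rises from level $0$ to level $1$, so the energy change is exactly $(\Weight (0) - \Weight (1)) + (\Weight (1) - \Weight (0)) = 0$, even though the site does change opinion. Since active piles perform recurrent symmetric random walks on $\Z$, such zero-energy opinion changes can occur arbitrarily many times, so no positive $c$ exists and the expected number of opinion changes at the origin is not controlled by the energy decrease. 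This is not a repairable technicality: the energy only decreases at collisions, and bounding the number of collisions does not distinguish fixation from fluctuation (a single wandering active pile flips a site infinitely often). Symptomatically, your argument never genuinely uses the hypothesis $S_{\reg} (\OO, \threshold) > 0$ --- the final inequality only uses $E \,\mathcal E_0^{[a,b]} - E \,\mathcal E_t^{[a,b]} \leq 2 M (b - a)$ --- so, were it correct, it would prove fixation for instance for the octahedron with $\threshold = 1$, contradicting Theorem~\ref{th:fluctuation} and Corollary~\ref{cor:polyhedron}.

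A second, independent problem is that the one-sided bounds available from the definitions of $p_n$ and $q_n$ point the wrong way for your drift computation. Because $p_n$ is a maximum and $q_n$ a minimum over the admissible exact distances, what can actually be proved (and what the paper proves in Lemma~\ref{lem:jump}, after establishing the exact collision law $f (s_-, s_+, s) / h (s_+)$ through the ancestry and non-crossing argument of Lemma~\ref{lem:collision}) is that the order decreases with probability \emph{at most} $p_n$ and increases with probability \emph{at least} $q_n$; your cancellation $(\Weight (0) - \Weight (1)) - p_n \,(\Weight (n) - \Weight (n - 1)) + q_n \,(\Weight (n + 1) - \Weight (n)) = \Weight (0)$ therefore bounds the expected energy change from below, not from above, and the worst case for your argument would require the opposite extremal constants, which define a different weight function and a different condition. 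The paper uses these bounds in the correct direction: the order of a frozen pile, observed at collision times, stochastically dominates the birth-and-death chain with parameters $p_n, q_n$, so the number of active piles sacrificed before the pile becomes active is at least the hitting time of state~$1$, whose mean is $1 + \Weight (k)$ (Lemmas~\ref{lem:hitting} and~\ref{lem:coupling}); fixation then follows from the active-path criterion of Lemma~\ref{lem:fixation-condition} combined with inclusion~\eqref{eq:inclusion-1} and the large deviation estimates of Section~\ref{sec:deviation}, and it is precisely in that last step that the hypothesis $S_{\reg} (\OO, \threshold) > 0$ is used.
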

 To understand the coefficients~$p_n$ and~$q_n$, we note that, letting~$i_-$ and~$j$ be two opinions at opinion distance~$s_-$ of each other, we have the following
 interpretation:
 $$ \begin{array}{rcl}
      f (s_-, s_+, s) / h (s_+) & = & \hbox{probability that an opinion~$i_+$ chosen uniformly} \vspace*{0pt} \\
                                &   & \hbox{at random among the opinions at distance~$s_+$ from} \\
                                &   & \hbox{opinion~$j$ is at distance~$s$ from opinion~$i_-$}. \end{array} $$


\noindent{\bf Outline of the proofs} --
 The lower bound for the probability of consensus on finite connected graphs follows from the optional stopping theorem after proving that
 the process that keeps track of the number of supporters of opinions belonging to the~$\threshold$-center is a martingale.
 The analysis of the infinite system is more challenging.
 The first key to all our proofs is to use the formal machinery introduced in~\cite{lanchier_2012, lanchier_scarlatos_2013, lanchier_schweinsberg_2012}
 that consists in keeping track of the disagreements along the edges of the spatial structure.
 This technique has also been used in~\cite{lanchier_moisson_2014, lanchier_scarlatos_2014} to study related models.
 In the context of our general opinion model, we put a pile of~$s$ particles on edges that connect individuals who are at opinion distance~$s$
 of each other, i.e., we set
 $$ \xi_t ((x, x + 1)) \ := \ d (\eta_t (x), \eta_t (x + 1)) \quad \hbox{for all} \quad x \in \Z. $$
 The definition of the confidence threshold implies that, piles with at most~$\threshold$ particles, that we call active, evolve
 according to symmetric random walks, while larger piles, that we call frozen, are static.
 In addition, the jump of an active pile onto another pile results in part of the particles being annihilated.
 The main idea to prove fluctuation is to show that, after identifying opinions that belong to the same member of the
 partition~\eqref{eq:fluctuation}, the process reduces to the voter model, and use that the one-dimensional voter model
 fluctuates according to~\cite{arratia_1983}.
 Fluctuation, together with the stronger assumption~$\radius \leq \threshold$, implies that the frozen piles, and ultimately all the piles
 of particles, go extinct, which is equivalent to clustering of the opinion model.

\indent In contrast, fixation occurs when the frozen piles have a positive probability of never being reduced, which is more difficult to establish.
 To briefly explain our approach to prove fixation, we say that the pile at~$(x, x + 1)$ is of order~$k$ when
 $$ (k - 1) \,\threshold \ < \ \xi_t ((x, x + 1)) \ \leq \ k \threshold. $$
 To begin with, we use a construction due to~\cite{bramson_griffeath_1989} to obtain an implicit condition for fixation in terms
 of the initial number of piles of any given order in a large interval.
 Large deviation estimates for the number of such piles are then proved and used to turn this implicit condition into the explicit
 condition~\eqref{eq:th-fixation}.
 To derive this condition, we use that at least~$k - 1$ active piles must jump onto a pile initially of order~$k > 1$ to turn this pile
 into an active pile.
 Condition~\eqref{eq:th-fixation} is obtained assuming the worst case scenario when the number of particles that annihilate is maximal.
 To show the improved condition for fixation~\eqref{eq:th-dist-reg} for distance-regular opinion graphs, we use the same approach but
 count more carefully the number of annihilating events.
 First, we use duality-like techniques to prove that, when the opinion graph is distance-regular, the system of piles becomes Markov.
 This is used to prove that the jump of an active pile onto a pile of order~$n > 1$ reduces/increases its order with respective
 probabilities at most~$p_n$ and at least~$q_n$.
 This implies that the number of active piles that must jump onto a pile initially of order~$k > 1$ to turn it into an active pile
 is stochastically larger than the first hitting time to state~1 of a certain discrete-time birth and death process.
 This hitting time is equal in distribution to
 $$ \begin{array}{l} \sum_{1 < n \leq k} \,\sum_{n \leq m \leq \ceil{\diameter / \threshold}} \,(q_n \,q_{n + 1} \cdots q_{m - 1}) / (p_n \,p_{n + 1} \cdots p_m) \ = \ 1 + \Weight (k). \end{array} $$
 The probabilities~$p_n$ and~$q_n$ are respectively the death parameter and the birth parameter of the discrete-time birth and death
 process while the integer~$\ceil{\diameter / \threshold}$ is the number of states of this process, which is also the maximum order
 of a pile. \vspace*{8pt}


\noindent{\bf Application to concrete opinion graphs} --
\begin{figure}[t]
\centering
\includegraphics[width=0.98\textwidth]{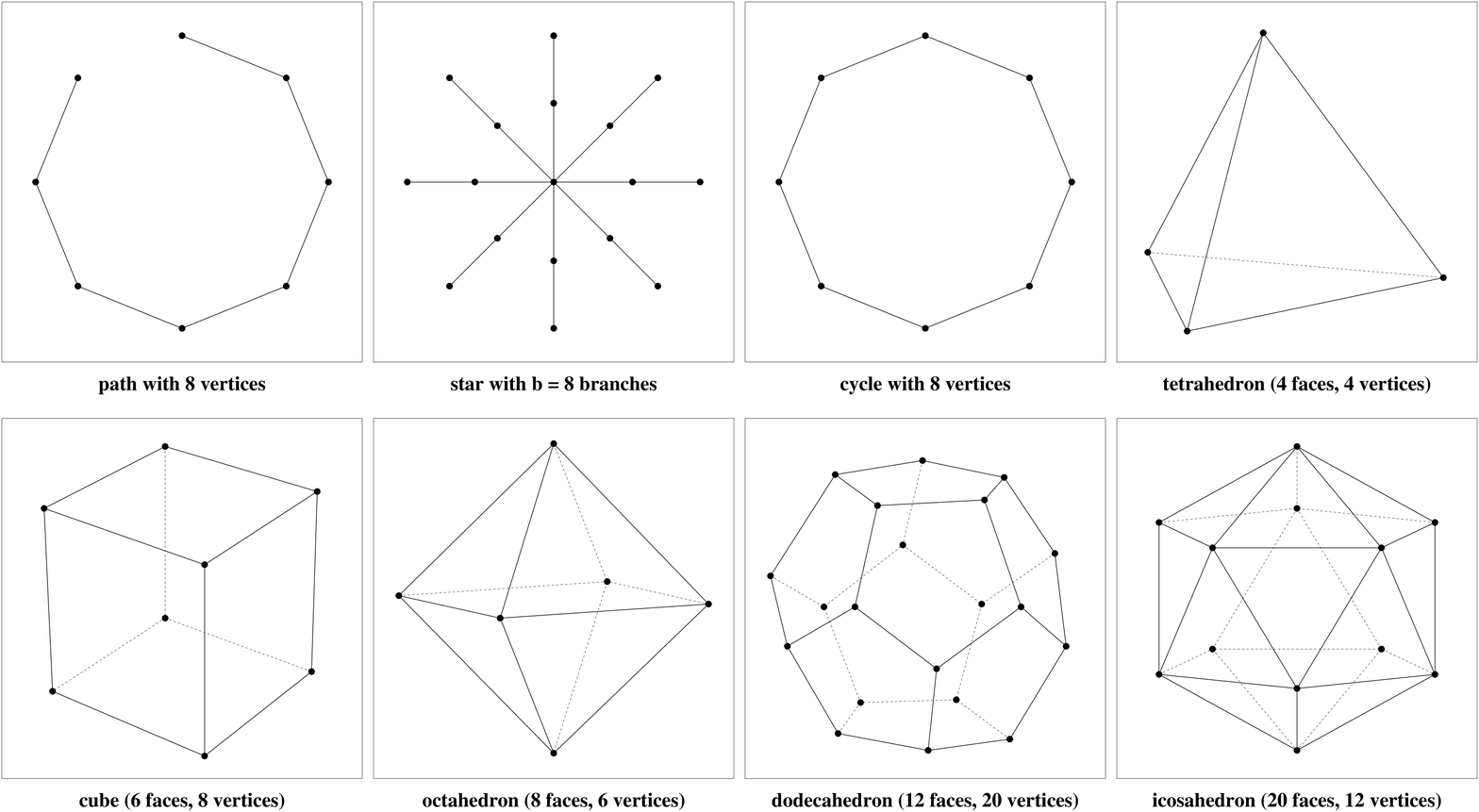}
\caption{\upshape{Opinion graphs considered in Corollaries~\ref{cor:path}--\ref{cor:hypercube}}}
\label{fig:graphs}
\end{figure}
 We now apply our general results to particular opinion graphs, namely the ones which are represented in Figure~\ref{fig:graphs}.
 First, we look at paths and more generally stars with~$b$ branches of equal length.
 For paths, one can think of the individuals as being characterized by their position about one issue, ranging from strongly agree to strongly disagree.
 For stars, individuals are offered~$b$ alternatives:
 the center represents undecided individuals while vertices far from the center are more extremist in their position.
 These graphs are not distance-regular so we can only apply Theorem~\ref{th:fixation} to study fixation of the infinite system.
 This theorem combined with Theorem~\ref{th:fluctuation} gives the following two corollaries.
\begin{corollary}[path] --
\label{cor:path}
 When~$\OO$ is the path with~$F$ vertices,
\begin{itemize}
 \item the system fluctuates when~\eqref{eq:product} holds and~$F \leq 2 \threshold + 1$ whereas \vspace*{3pt}
 \item the system fixates when~\eqref{eq:uniform} holds and~$3 F^2 - (20 \threshold + 3) \,F + 10 \,(3 \threshold + 1) \,\threshold > 0$.
\end{itemize}
\end{corollary}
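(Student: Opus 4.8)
The plan is to read the two general theorems off against the elementary combinatorics of the path, with almost all of the work sitting in the computation of $S(\OO,\threshold)$.

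For the first bullet I would simply invoke Theorem~\ref{th:fluctuation}.a. The path with $F$ vertices has diameter $\diameter=F-1$ and radius $\radius=\ceil{(F-1)/2}$, the minimum eccentricity being attained at a central vertex; hence $\radius\leq\threshold$ is equivalent to $F\leq 2\threshold+1$. Under this condition the remark following Theorem~\ref{th:fluctuation} applies: either $\threshold\geq\diameter$, in which case every $\VO$-partition works, or $\radius\leq\threshold<\diameter$, in which case $\{C(\OO,\threshold),\VO\setminus C(\OO,\threshold)\}$ is a $\VO$-partition satisfying~\eqref{eq:fluctuation}. Either way~\eqref{eq:fluctuation} holds and fluctuation follows.

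For the second bullet I would verify the hypothesis~\eqref{eq:th-fixation} of Theorem~\ref{th:fixation}.a. First record that in the path with $F$ vertices there are exactly $F-s$ unordered pairs of vertices at distance $s$ for each $1\leq s\leq\diameter=F-1$, so $N(\OO,s)=2\,(F-s)$. Writing $a:=F-1$ and using that $\ceil{s/\threshold}=k$ means $(k-1)\threshold<s\leq k\threshold$, together with $\ceil{s/\threshold}-2=\sum_{k\geq 3}\ind\{s\geq (k-1)\threshold+1\}-\ind\{s\leq\threshold\}$ for $s\geq 1$ and the telescoping identity $\sum_{s=(k-1)\threshold+1}^{a}(F-s)=\tfrac12\,n_k(n_k+1)$ with $n_k:=a-(k-1)\threshold$, I would reduce the sum to the closed form
$$ S(\OO,\threshold)\ =\ 2\sum_{s=1}^{a}(\ceil{s/\threshold}-2)\,(F-s)\ =\ \sum_{k\,:\,n_k\geq 1}n_k(n_k+1)\ -\ \threshold\,(2F-\threshold-1), $$
where the first sum effectively runs over $3\leq k\leq\ceil{a/\threshold}$ (the assumption $a\geq\threshold$ is harmless, since $a<\threshold$ forces $\radius\leq\threshold$ and is already covered by the first bullet).

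The crucial observation, which I would highlight, is that for a fixed value of $K:=\ceil{a/\threshold}$ this expression is a quadratic polynomial in $F$, and that for $K=5$ a direct computation with $n_3=a-2\threshold$, $n_4=a-3\threshold$, $n_5=a-4\threshold$ gives exactly $S(\OO,\threshold)=3F^2-(20\threshold+3)\,F+10\,(3\threshold+1)\,\threshold$, the left-hand side in the statement. Since $K=5$ corresponds to $4\threshold+2\leq F\leq 5\threshold+1$, in this window the hypothesis of the corollary is literally $S(\OO,\threshold)>0$ and Theorem~\ref{th:fixation}.a applies. To close the argument I would note that $K\geq 6$ only adds the nonnegative quantity $\sum_{k=6}^{K}n_k(n_k+1)$ to the same quadratic, hence $S(\OO,\threshold)>0$ there as well; that the larger root $\tfrac16(20\threshold+3+\sqrt{40\threshold^2+9})$ of the quadratic always lies in $[4\threshold+1,5\threshold+2)$, so that the quadratic being positive with $F>2\threshold+1$ puts us in the range $K\geq 5$; and that the remaining small values of $F$ are either in the fluctuation regime of the first bullet or dispatched by a direct check.

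I expect the main obstacle to be organizational rather than conceptual: one has to see that $S(\OO,\threshold)$, as a function of $F$, is piecewise quadratic with breakpoints at the multiples of $\threshold$, and that the single quadratic in the statement is precisely the piece lying over the window where $S(\OO,\threshold)$ changes sign, the other pieces being handled by monotonicity and sign considerations — together with the mildly fiddly bookkeeping of the last, possibly incomplete, block of length $\threshold$ in the computation above. A secondary point to be careful about is that the naive bound $\ceil{s/\threshold}\geq s/\threshold$ is too lossy: it would only give fixation for $F\gtrsim 6\threshold$ rather than $F\gtrsim 4.4\,\threshold$, so the block sums genuinely need to be evaluated exactly.
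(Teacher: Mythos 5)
Your proposal follows essentially the same route as the paper. For fluctuation, both invoke Theorem~\ref{th:fluctuation}a via Lemma~\ref{lem:partition} and $\radius=\lfloor F/2\rfloor\leq\threshold$ iff $F\leq 2\threshold+1$. For fixation, the paper likewise takes $N(\OO,s)=2(F-s)$, restricts to $4\threshold<F-1\leq 5\threshold$ (your $K=5$ window), evaluates $S(\OO,\threshold)$ exactly there to obtain the stated quadratic, and extends to larger $F$ using that the larger root lies in that window. Your closed form $S(\OO,\threshold)=\sum_{k=3}^{K}n_k(n_k+1)-\threshold\,(2F-\threshold-1)$ is correct (I checked it reproduces $3F^2-(20\threshold+3)F+10(3\threshold+1)\threshold$ for $K=5$), and your handling of $K\geq 6$ --- the extra terms $n_k(n_k+1)$ are positive while the quadratic is already positive beyond its larger root $\leq 5\threshold+1$ --- is if anything cleaner than the paper's appeal to monotonicity of $F\mapsto S(\OO,\threshold)$, which is not literally monotone over all $F$.

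The one step that fails is the claim that ``the quadratic being positive with $F>2\threshold+1$ puts us in the range $K\geq 5$.'' The smaller root of the quadratic exceeds $2\threshold+1$ as soon as $\threshold>2$, so the corollary's hypothesis can hold with $F$ just above $2\threshold+1$ and $K$ small: for $\threshold=10$ and $F=22$ the quadratic equals $86>0$ and $F>21$, yet $K=3$ and in fact $S(\OO,\threshold)=2-330=-328<0$, so Theorem~\ref{th:fixation}a gives nothing there; moreover $\radius=11>\threshold$, so the fluctuation bullet does not apply either, and no ``direct check'' based on the paper's results settles such cases. To be fair, these values lie equally outside the paper's own proof, which treats only $F-1>4\threshold$ and thereby implicitly reads the hypothesis as $F$ exceeding the larger root (this is also how Table~\ref{tab:summary} reports the condition). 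So your argument is complete on the regime actually intended, but the sentence disposing of the low-$F$ branch is false as stated and should be replaced by the same explicit restriction rather than by an appeal to the first bullet or a direct check.
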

\begin{corollary}[star] --
\label{cor:star}
 When~$\OO$ is the star with~$b$ branches of length~$r$,
\begin{itemize}
 \item the system fluctuates when~\eqref{eq:product} holds and~$r \leq \threshold$ whereas \vspace*{3pt}
 \item the system fixates when~\eqref{eq:uniform} holds, $2r > 3 \threshold$ and
 $$ 4 \,(b - 1) \,r^2 + 2 \,((4 - 5b) \,\threshold + b - 1) \,r + (6b - 5) \,\threshold^2 + (1 - 2b) \,\threshold \ > \ 0. $$
\end{itemize}
\end{corollary}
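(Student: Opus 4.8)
The plan is to read off the geometry of the star, dispatch the fluctuation half with Theorem~\ref{th:fluctuation}, and obtain the fixation half by computing $N (\OO, s)$ by hand and feeding it into Theorem~\ref{th:fixation}.a. First I would fix coordinates: write $o$ for the center of the star and $v_{a, 1}, \ldots, v_{a, r}$ for the vertices of branch $a \in \{1, \ldots, b\}$, with $v_{a, i}$ at opinion distance $i$ from $o$. Then $d (v_{a, i}, v_{a, j}) = |i - j|$ within a branch while $d (v_{a, i}, v_{a', j}) = i + j$ across two distinct branches, so $o$ has eccentricity $r$ and every leaf has eccentricity $2r$; hence $\radius = r$ and $\diameter = 2r$. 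The first bullet is then immediate: since $\radius = r \leq \threshold$, fluctuation under~\eqref{eq:product} follows from Theorem~\ref{th:fluctuation}.a, as already noted in the discussion following that theorem.

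For the second bullet I would appeal to Theorem~\ref{th:fixation}.a; as remarked above the star is not distance-regular (it is not even regular, its center and its leaves having degrees $b$ and $1$), so Theorem~\ref{th:dist-reg} is unavailable. The task is thus to compute $S (\OO, \threshold)$ and show it is positive, and the first step for that is to count, for each $s \in \{1, \ldots, 2r\}$, the ordered pairs of opinions at distance $s$. I would do this by sorting them into the three possible types---a pair $\{o, v_{a, i}\}$ with $i = s$, a pair $\{v_{a, i}, v_{a, j}\}$ on one branch with $|i - j| = s$, and a pair $\{v_{a, i}, v_{a', j}\}$ on two distinct branches with $i + j = s$---and using that the number of $(i, j) \in \{1, \ldots, r\}^2$ with $i + j = s$ equals $\min \{s - 1, \, 2r + 1 - s\}$ for $2 \leq s \leq 2r$. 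This produces the piecewise formula
\begin{equation*}
  N (\OO, s) \ = \ 2b \,\ind \{s \leq r\} \ + \ 2b \,(r - s) \,\ind \{s \leq r - 1\} \ + \ b \,(b - 1) \,\min \{s - 1, \, 2r + 1 - s\} \,\ind \{2 \leq s \leq 2r\}.
\end{equation*}

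The last step is to substitute this into $S (\OO, \threshold) = \sum_{k > 0} (k - 2) \sum_{s : \ceil{s / \threshold} = k} N (\OO, s)$ and evaluate the finite sums that result. This is where the hypothesis $2r > 3 \threshold$ is used: it forces the first block $\{s : \ceil{s / \threshold} = 1\}$ into the range $s \leq r - 1$, makes the blocks carrying a positive weight nonempty, and pins down the interaction between the ceiling blocks $\{s : \ceil{s / \threshold} = k\}$ and the breakpoints of $N (\OO, \cdot)$ at $s = r - 1$, $s = r + 1$ and $s = 2r$, reducing the whole computation to a manageable case analysis (possibly with a few sub-cases). Once the bookkeeping is settled each summand is an elementary arithmetic or quadratic sum in $r$ and $\threshold$, and collecting terms shows that $S (\OO, \threshold) > 0$ holds whenever $4 (b - 1) r^2 + 2 ((4 - 5b) \threshold + b - 1) r + (6b - 5) \threshold^2 + (1 - 2b) \threshold > 0$; Theorem~\ref{th:fixation}.a then gives fixation under~\eqref{eq:uniform}.

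The step I expect to be the real obstacle is precisely this last bookkeeping: one must track which of the three terms in $N (\OO, s)$ is active on each block $\{s : \ceil{s / \threshold} = k\}$---which may require a further split according to the size of $r$ relative to the multiples of $\threshold$---and verify that $2r > 3 \threshold$ does isolate the case whose partial sums collapse to the stated quadratic. Everything else is a routine evaluation of finite sums, together with the two theorem citations above.
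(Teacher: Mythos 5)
Your overall route matches the paper's: fluctuation via $\radius = r \leq \threshold$ and Theorem~\ref{th:fluctuation}.a, and fixation via an explicit pair count fed into Theorem~\ref{th:fixation}.a. Your formula for $N (\OO, s)$ is correct and is exactly the paper's Lemma~\ref{lem:star} in disguise: for $s \leq r$ your three terms sum to $b \,(2r + (b - 3)(s - 1))$ and for $r < s \leq 2r$ to $b \,(b - 1)(2r - s + 1)$. So the raw ingredients are right.

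The gap is in the last step, which you both defer and mis-describe. The hypothesis $2r > 3 \threshold$ does not by itself make the block sums ``collapse to the stated quadratic.'' The identity $S (\OO, \threshold) = b \,[\,4 (b - 1) r^2 + 2 ((4 - 5b) \threshold + b - 1) r + (6b - 5) \threshold^2 + (1 - 2b) \threshold\,]$ holds exactly only when in addition $2r \leq 4 \threshold$, i.e.\ $\ceil{\diameter / \threshold} = 4$: then the only blocks are $k = 1, 2, 3, 4$ with weights $-1, 0, 1, 2$, and $\threshold < r \leq 2 \threshold$ places the $k = 1$ block in the first regime of $N (\OO, \cdot)$ and the $k = 3, 4$ blocks in the second, which is precisely the computation the paper carries out under the standing assumption $3 \threshold < 2r \leq 4 \threshold$. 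For $2r > 4 \threshold$ --- a case the corollary also covers, since the quadratic is positive there for $r$ large --- extra blocks with weights $3, 4, \ldots$ appear, $S (\OO, \threshold)$ is a different piecewise polynomial, and ``the partial sums collapse to the quadratic'' is simply false; you need a supplementary argument, e.g.\ the one the paper uses for the path corollary: check that the largest root in $r$ of the quadratic lies inside the window $(\tfrac{3}{2} \threshold, 2 \threshold]$ and that $r \mapsto S (\OO, \threshold)$ is nondecreasing for fixed $\threshold$, so positivity propagates to all larger $r$. Finally, since the quadratic identity is the whole content of the fixation half, actually performing the block-by-block evaluation in the window (rather than announcing that the bookkeeping will work out) is the part of the proof that still remains to be written; once you restrict to $3 \threshold < 2r \leq 4 \threshold$ it is the straightforward computation done in the paper.
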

 To illustrate Theorem~\ref{th:dist-reg}, we now look at distance-regular graphs, starting with the five convex regular polyhedra also known as the Platonic solids.
 These graphs are natural mathematically though we do not have any specific interpretation from the point of view of social sciences except, as explained below,
 for the cube and more generally hypercubes.
 For these five graphs, Theorems~\ref{th:fluctuation} and~\ref{th:dist-reg} give sharp results with the exact value of the critical threshold except
 for the dodecahedron for which the behavior when~$\threshold = 3$ remains an open problem.
\begin{corollary}[Platonic solids] --
\label{cor:polyhedron}
 Assume~\eqref{eq:uniform}. Then,
\begin{itemize}
 \item the tetrahedral model fluctuates for all~$\threshold \geq 1$, \vspace*{2pt}
 \item the cubic model fluctuates when~$\threshold \geq 2$ and fixates when~$\threshold \leq 1$, \vspace*{2pt}
 \item the octahedral model fluctuates for all~$\threshold \geq 1$, \vspace*{2pt}
 \item the dodecahedral model fluctuates when~$\threshold \geq 4$ and fixates when~$\threshold \leq 2$, \vspace*{2pt}
 \item the icosahedral model fluctuates when~$\threshold \geq 2$ and fixates when~$\threshold \leq 1$.
\end{itemize}
\end{corollary}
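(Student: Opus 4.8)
The plan is to treat the five solids one at a time, exploiting the fact that each of them is a distance-regular (indeed distance-transitive) graph, so that both Theorem~\ref{th:fluctuation} and Theorem~\ref{th:dist-reg} are available. For each graph I first record the data entering the two criteria: the diameter $\diameter$, the radius $\radius$, the distance distribution $h(s)=\card \OO_s(i)$, and the intersection array $\{b_0,\ldots,b_{\diameter-1};c_1,\ldots,c_\diameter\}$. Concretely, the tetrahedron is $K_4$ with $\diameter=\radius=1$; the octahedron is $K_{2,2,2}$ with $\diameter=\radius=2$; the cube is $Q_3$ with $\diameter=\radius=3$, $h=(1,3,3,1)$ and array $\{3,2,1;1,2,3\}$; the icosahedron has $\diameter=\radius=3$, $h=(1,5,5,1)$ and array $\{5,2,1;1,2,5\}$; and the dodecahedron has $\diameter=\radius=5$, $h=(1,3,6,6,3,1)$ and array $\{3,2,1,1,1;1,1,1,2,3\}$. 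An important structural feature of the last four graphs is that they are \emph{antipodal}: the relation ``$d(i,j)=\diameter$'' is a perfect matching on $\VO$.

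For the fluctuation halves I apply Theorem~\ref{th:fluctuation}.a, distinguishing two regimes. When $\threshold\ge\diameter$ every interaction is an imitation, the process is literally the voter model on $\VO$ opinions, and fluctuation is immediate; this already gives all $\threshold\ge1$ for the tetrahedron. When $\threshold=\diameter-1$, antipodality supplies an explicit $V$-partition witnessing \eqref{eq:fluctuation}: take $V_1=\{i,\bar\imath\}$ to be one antipodal pair and $V_2=\VO\setminus V_1$. For $i'\in V_1$ and $j\in V_2$ one has $d(i',j)\le\diameter$, with equality only when $j$ is the antipode of $i'$, i.e.\ $j\in V_1$; hence $d(i',j)\le\diameter-1=\threshold$ for every cross-pair, and Theorem~\ref{th:fluctuation}.a applies. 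Since there is no integer strictly between $\diameter-1$ and $\diameter$, this covers exactly ``$\threshold\ge1$'' for the octahedron, ``$\threshold\ge2$'' for the cube and the icosahedron, and ``$\threshold\ge4$'' for the dodecahedron (clustering is not claimed in these borderline cases, consistent with $\radius>\threshold$ there).

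For the fixation halves I apply Theorem~\ref{th:dist-reg}, so it suffices to check $S_{\reg}(\OO,\threshold)>0$. Because the opinion graph is distance-regular, the intersection numbers $f(s_-,s_+,s)$ of \eqref{eq:dist-reg-1} are determined by the intersection array and can be computed directly; from them I form the coefficients $p_n$ and $q_n$, then the weights $\Weight(k)$, then $S_{\reg}$. I will carry this out in detail for the cube with $\threshold=1$ as the model case: here $\ceil{\,\cdot\,/\threshold}$ is the identity, $\ceil{\diameter/\threshold}=3$, the pile orders are $k=1,2,3$, and the suprema/infima defining $p_n,q_n$ are single terms, $p_2=f(1,2,1)/h(2)$, $q_2=f(1,2,3)/h(2)$, $p_3=f(1,3,2)/h(3)$, $q_3=f(1,3,4)/h(3)=0$; evaluating these on $Q_3$ and substituting into $\Weight$ yields weights $\Weight(1)<0<\Weight(2)<\Weight(3)$ whose $h$-weighted combination $\Weight(1)h(1)+\Weight(2)h(2)+\Weight(3)h(3)$ comes out strictly positive, so $S_{\reg}(\OO,1)>0$ and fixation holds. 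The icosahedron with $\threshold=1$ and the dodecahedron with $\threshold\in\{1,2\}$ are handled by the same recipe; in the dodecahedral case with $\threshold=2$ one has $\ceil{\diameter/\threshold}=3$, the orders are again $k=1,2,3$ grouping $s\in\{1,2\},\{3,4\},\{5\}$, and $p_n,q_n$ are genuine maxima/minima over the finitely many $(s_-,s_+)$ compatible with the prescribed orders.

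The main obstacle is purely computational: assembling the intersection numbers $f(s_-,s_+,s)$ for the dodecahedron (twenty vertices, diameter five) and the icosahedron, and then tracking signs through the definition of $\Weight(k)$ and the telescoping sum defining $S_{\reg}$ — one must be sure the single negative contribution $\Weight(1)h(1)$ is outweighed by the later, increasing weights. A secondary point worth flagging is the threshold $\threshold=3$ for the dodecahedron, which the corollary leaves open: there $\diameter-1=4>3$, and one checks that the graph on $\VO$ whose edges are the pairs at distance $\ge4$ is connected, so no $V$-partition satisfies \eqref{eq:fluctuation} and Theorem~\ref{th:fluctuation} gives nothing; while the corresponding value $S_{\reg}(\OO,3)$ fails to be positive, so Theorem~\ref{th:dist-reg} gives nothing either. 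For the other four solids the fixation threshold and the fluctuation threshold produced above are adjacent, which pins down the critical threshold exactly, since under \eqref{eq:product} fluctuation and fixation are mutually exclusive.
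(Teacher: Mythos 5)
Your proposal is correct and follows essentially the same route as the paper: fluctuation via Theorem~\ref{th:fluctuation}.a using the partition $V_1=\{i_-,i_+\}$ with $d(i_-,i_+)=\diameter$ (unique antipode), and fixation via Theorem~\ref{th:dist-reg} by computing $p_n,q_n$ from the intersection numbers and checking $S_{\reg}(\OO,\threshold)>0$ (your cube values $p_2=2/3$, $q_2=1/3$ indeed give $S_{\reg}=2>0$, and the analogous computations for the icosahedron and dodecahedron go through). The only inessential deviations are that the paper bounds $p_3\leq 1$, $q_2\geq 0$ instead of computing them exactly, and treats the dodecahedron with $\threshold=1$ by the simpler criterion of Theorem~\ref{th:fixation} rather than Theorem~\ref{th:dist-reg}; both give the same conclusions.
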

 Next, we look at the case where the individuals are characterized by some preferences represented by the set of vertices of a cycle.
 For instance, as explained in~\cite{boudourides_scarlatos_2005}, all strict orderings of three alternatives can be represented by the cycle
 with~$3! = 6$ vertices.
\begin{corollary}[cycle] --
\label{cor:cycle}
 When~$\OO$ is the cycle with~$F$ vertices,
\begin{itemize}
 \item the system fluctuates when~\eqref{eq:product} holds and~$F \leq 2 \threshold + 2$ whereas \vspace*{3pt}
 \item the system fixates when~\eqref{eq:uniform} hold and~$F \geq 4 \threshold + 2$.
\end{itemize}
\end{corollary}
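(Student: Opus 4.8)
The plan is to coordinatize the opinion graph $\OO = C_F$ by $\Z / F \Z$, so that $d(i,j) = \min(|i-j|, F - |i-j|)$, and to record the following facts: $C_F$ is distance-transitive, hence distance-regular, so that~\eqref{eq:dist-reg-1} holds; $\radius = \diameter = \integer{F/2}$; and $h(0) = 1$, $h(s) = 2$ for $1 \le s < F/2$, $h(F/2) = 1$ when $F$ is even. Write $M := \ceil{\diameter / \threshold}$ for the maximal order of a pile. The two bullets are then treated separately: fluctuation via Theorem~\ref{th:fluctuation} and fixation via Theorem~\ref{th:dist-reg} (Theorem~\ref{th:fixation} alone is too weak here, since one checks that it yields $S (\OO, \threshold) > 0$ only when $F \ge 6 \threshold + 2$).

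For fluctuation, if $F \le 2 \threshold + 1$ then every pair of opinions is at distance at most $\diameter \le \threshold$, so~\eqref{eq:fluctuation} holds for any partition of $V$ into two nonempty sets and Theorem~\ref{th:fluctuation}.a applies. If $F = 2 \threshold + 2$, I would group the vertices into the $\threshold + 1 \ge 2$ antipodal pairs $\{k, \, k + \threshold + 1\}$, $0 \le k \le \threshold$, and let $\{V_1, V_2\}$ be any splitting of these pairs into two nonempty families. Two opinions lying in different members of $\{V_1, V_2\}$ then belong to different antipodal pairs, so they are distinct and non-antipodal, hence at distance at most $\diameter - 1 = \threshold$; thus~\eqref{eq:fluctuation} holds and Theorem~\ref{th:fluctuation}.a again gives fluctuation.

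For fixation, assume~\eqref{eq:uniform} and $F \ge 4 \threshold + 2$, so $\diameter \ge 2 \threshold + 1$ and $M \ge 3$. The heart of the argument is the bound $p_n \le 1/2$ for $2 \le n \le M - 1$. To obtain it, fix $s_-$ of order $1$ and $s_+$ of order $n \le M - 1$; then $s_+ \le (M-1) \threshold < \diameter$, so $h(s_+) = 2$. Placing $i_- = 0$ and, using the reflection symmetry of the cycle, $j = s_-$, the two opinions at distance $s_+$ from $j$ are $s_- \pm s_+$, each selected with probability $1/2$. Since $0 < s_+ - s_- < \diameter$, the opinion $s_- - s_+$ is at distance $s_+ - s_- \in ((n-2)\threshold, n \threshold)$ from $i_-$, of order $n - 1$ or $n$. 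The opinion $s_- + s_+$ is at distance $s_- + s_+$ from $i_-$ when $s_- + s_+ \le \diameter$ (order $\ge n$, as $s_- + s_+ > (n-1)\threshold$), and at distance $F - (s_- + s_+)$ otherwise; in the latter case $s_- + s_+ \le (n+1)\threshold \le M \threshold < \diameter + \threshold$ forces $F - (s_- + s_+) > F - \diameter - \threshold \ge \diameter - \threshold > (M-2)\threshold$, of order $\ge M - 1 > n - 1$. Hence only the opinion $s_- - s_+$ can place the new pile in order $n - 1$, which gives $\sum_{s \,:\, \ceil{s/\threshold} = n - 1} f(s_-, s_+, s) / h(s_+) \le 1/2$, and so $p_n \le 1/2$.

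Feeding $p_n \le 1/2$ for $n \le M - 1$, together with the trivial bounds $p_M \le 1$ and $q_n \ge 0$, into the definition of $\Weight$ and keeping only the $m = n$ terms, I get $\Weight (k) \ge -1 + \sum_{2 \le n \le k} 1/p_n \ge 2k - 3$ for $2 \le k \le M - 1$ and $\Weight (M) \ge -1 + 2(M-2) + 1/p_M \ge 2M - 4$. Since $M \ge 3$, this yields $\Weight (M-1) \ge 1$ and $\Weight (M) \ge 2$, while $\Weight (k) \ge 1$ for $2 \le k \le M - 2$ and $\Weight (1) = -1$. The number of opinions of order $k$ equals $2 \threshold$ for $1 \le k < M$ (no distance of such order equals $\diameter$) and is at least $1$ for $k = M$, so dropping the nonnegative contributions of orders $2, \dots, M-2$,
$$ S_{\reg} (\OO, \threshold) \ \ge \ (-1)(2\threshold) + \Weight (M-1) \,(2\threshold) + \Weight (M) \cdot 1 \ \ge \ -2\threshold + 2\threshold + 2 \ = \ 2 \ > \ 0, $$
and Theorem~\ref{th:dist-reg} gives fixation. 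I expect the bound $p_n \le 1/2$ to be the only delicate point: one has to rule out that the ``wrapped'' candidate distance $F - (s_- + s_+)$ lands in order $n - 1$, and this is exactly where $F \ge 4 \threshold + 2$ (equivalently $M \ge 3$ and $\diameter > (M-1)\threshold$) enters; note that the argument never uses the precise values of the $q_n$, so it applies uniformly for all $\threshold \ge 1$.
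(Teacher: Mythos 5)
Your proposal is correct, and on the fixation side it is actually more complete than the paper's own proof. The fluctuation part is essentially the paper's argument: for $F \leq 2\threshold + 1$ condition~\eqref{eq:fluctuation} is immediate, and for $F = 2\threshold + 2$ both you and the paper build a partition in which no two opinions lying in different classes are antipodal (the paper simply takes $V_1$ to be a single antipodal pair and $V_2$ its complement; your splitting of all antipodal pairs is the same idea). For fixation, both proofs run through Theorem~\ref{th:dist-reg} and the same geometric mechanism: given the left opinion at distance $s_-$, the two equally likely candidates at distance $s_+$ from the middle opinion are $s_- \pm s_+$, and only $s_- - s_+$ can lower the order, the wrapped distance $F - (s_- + s_+)$ being too large once $F \geq 4\threshold + 2$. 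The difference is that the paper carries this out only in the boundary case $F = 4\threshold + 2$, where $\ceil{\diameter/\threshold} = 3$, computing $p_2 = 1/2$ exactly and inserting it into the three-state identity~\eqref{eq:common}, whereas you prove $p_n \leq 1/2$ for every $2 \leq n \leq M - 1$ (your case analysis of the wrapped candidate is correct, resting on $(M-1)\threshold < \diameter$ and $F \geq 2\diameter$) and then lower-bound the weights $\Weight(k)$ for arbitrary $M \geq 3$, so your argument covers the whole range $F \geq 4\threshold + 2$ uniformly; the paper's written proof leaves $F > 4\threshold + 2$ implicit (it could be patched by observing that the $M = 3$ computation persists while $\diameter \leq 3\threshold$ and that Theorem~\ref{th:fixation} takes over for larger $F$, in line with your side remark about $F \geq 6\threshold + 2$). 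Two cosmetic points only: the phrase ``number of opinions of order $k$'' should read $\sum_{s : \ceil{s/\threshold} = k} h(s)$, which is the quantity you in fact use, and, exactly like the paper, your partition for $F = 2\threshold + 2$ tacitly assumes $\threshold \geq 1$ so that at least two antipodal pairs exist; neither affects correctness.
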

 Finally, we look at hypercubes with~$F = 2^d$ vertices, which are generalizations of the three-dimensional cube.
 In this case, the individuals are characterized by their position --~in favor or against~-- about~$d$ different issues, and the opinion distance between two
 individuals is equal to the number of issues they disagree on.
 Theorem~\ref{th:dist-reg} gives the following result.
\begin{corollary}[hypercube] --
\label{cor:hypercube}
 When~$\OO$ is the hypercube  with~$2^d$ vertices,
\begin{itemize}
 \item the system fluctuates when~\eqref{eq:product} holds and~$d \leq \threshold + 1$ whereas \vspace*{3pt}
 \item the system fixates when~\eqref{eq:uniform} holds and~$d / \threshold > 3$ or when~$d / \threshold > 2$ with~$\threshold$ large.
\end{itemize}
\end{corollary}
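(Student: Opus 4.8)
The plan is to obtain Corollary~\ref{cor:hypercube} from Theorems~\ref{th:fluctuation}, \ref{th:fixation} and~\ref{th:dist-reg}. Identify the vertices of the hypercube~$\OO$ with~$\{0, 1\}^d$, so that the opinion distance is the Hamming distance and~$\radius = \diameter = d$; the distance partition around any vertex has size~$h (s) = \binom{d}{s}$, and counting how a vertex at distance~$s_-$ from~$i_-$ and~$s_+$ from~$i_+$ distributes its flipped coordinates between the~$\ell := d (i_-, i_+)$ coordinates in which~$i_-$ and~$i_+$ disagree and the other~$d - \ell$ ones gives
$$ f (s_-, s_+, \ell) \ = \ \binom{\ell}{(\ell + s_- - s_+)/2} \,\binom{d - \ell}{(s_- + s_+ - \ell)/2}, $$
which depends only on~$\ell$; thus~$\OO$ is distance-regular, so that Theorem~\ref{th:dist-reg} is available in addition to Theorems~\ref{th:fluctuation} and~\ref{th:fixation}. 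For the fluctuation statement, when~$d \leq \threshold$ the diameter is at most~$\threshold$ and every conclusion is trivial, while when~$d = \threshold + 1$ I would take~$V_1 := \{\mathbf 0, \mathbf 1 \}$ (the all-zeros and all-ones vertices) and~$V_2 := \VO \setminus V_1$: since each~$j \in V_2$ has between~$1$ and~$d - 1$ coordinates equal to~$1$, both~$d (\mathbf 0, j)$ and~$d (\mathbf 1, j)$ lie in~$\{1, \ldots, d - 1 \} = \{1, \ldots, \threshold \}$, so~$\{V_1, V_2 \}$ satisfies~\eqref{eq:fluctuation} and Theorem~\ref{th:fluctuation}.a applies. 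This proves fluctuation whenever~$d \leq \threshold + 1$.

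For the first fixation condition it is cleanest to invoke Theorem~\ref{th:fixation}.a. With~$N (\OO, s) = 2^d \binom{d}{s}$ and with~$B_k := \sum_{(k - 1) \threshold < s \leq \min (k \threshold, d)} \binom{d}{s}$ denoting the order-$k$ mass, one gets~$S (\OO, \threshold) = 2^d \,(- B_1 + \sum_{k \geq 3} (k - 2) B_k)$, and the symmetry~$\binom{d}{s} = \binom{d}{d - s}$ turns the sums~$B_k$ with~$k \geq 3$ into sums over small values of~$s$ that cancel~$- B_1$ and leave a strictly positive remainder precisely when~$d > 3 \threshold$; hence~$S (\OO, \threshold) > 0$ in that range. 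The same conclusion also follows from Theorem~\ref{th:dist-reg}, since~$\Weight (k) \geq -1 + \sum_{2 \leq n \leq k} 1/p_n \geq k - 2$ for every relevant~$k$ and therefore~$S_{\reg} (\OO, \threshold) \geq 2^{-d} S (\OO, \threshold)$.

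The second fixation condition genuinely needs Theorem~\ref{th:dist-reg}, so the task is to prove~$S_{\reg} (\OO, \threshold) > 0$ when~$d / \threshold > 2$ and~$\threshold$ is large. The starting point is a probabilistic reading of the weights: if~$i_-$ and~$j$ are at Hamming distance~$s_-$ and~$i_+$ is uniform among the vertices at distance~$s_+$ from~$j$, then~$d (i_-, i_+) = s_- + s_+ - 2 H$, where~$H$ --~the number of the~$s_+$ flipped coordinates falling among the~$s_-$ coordinates in which~$i_-$ and~$j$ disagree~-- has the~$\hypergeometric (d, s_-, s_+)$ distribution; hence~$p_n$ is the maximum over~$s_-$ of order~$1$ and~$s_+$ of order~$n$ of~$P (\ceil{(s_- + s_+ - 2 H)/\threshold} = n - 1)$, and~$q_n$ the corresponding minimum with~$n + 1$ in place of~$n - 1$. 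Since~$\Weight (k) - \Weight (k - 1)$ is a sum of nonnegative terms including~$1/p_k > 0$, the map~$k \mapsto \Weight (k)$ is nondecreasing, and with~$\Weight (1) = - 1$ and~$\Weight (2) \geq 1/p_2 - 1 \geq 0$ one obtains
$$ S_{\reg} (\OO, \threshold) \ \geq \ - \sum_{s = 1}^{\threshold} \binom{d}{s} \, + \, \Weight (2) \,\Big( 2^d - 1 - \sum_{s = 1}^{\threshold} \binom{d}{s} \Big). $$
I would then show that~$P (\ceil{(s_- + s_+ - 2 H)/\threshold} = 1)$ is asymptotically largest at~$s_- = 1$, $s_+ = \threshold + 1$, where it equals~$(\threshold + 1)/d$, a central-limit/large-deviation analysis of~$H$ ruling out all other admissible choices, so that~$p_2 \to \threshold/d < 1/2$ and hence~$\Weight (2)$ exceeds~$1$ for~$\threshold$ large; on the other hand~$\sum_{s \leq \threshold} \binom{d}{s}$ is an exponentially small fraction of~$2^d$ because~$\threshold$ sits a linear distance below the median~$d/2$ of~$\binomial (d, 1/2)$. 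Combining these two facts makes the right-hand side above positive for~$\threshold$ large.

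The main obstacle is this last paragraph. One must carry out the maximization of~$P (\ceil{(s_- + s_+ - 2 H)/\threshold} = 1)$ over all admissible pairs~$(s_-, s_+)$ rigorously, in particular ruling out the finite-size configurations in which a strongly concentrated hypergeometric pushes this probability above~$1/2$ --~which is exactly the phenomenon forcing the hypothesis ``$\threshold$ large''~-- and then estimate the binomial partial sums~$\sum_{s \leq \threshold} \binom{d}{s}$ against~$2^d$ sharply enough to handle the near-critical ratios~$d / \threshold$ slightly above~$2$. Once those bounds are available, substituting into~\eqref{eq:th-dist-reg} --~and, should additional margin be needed, retaining the further positive terms~$q_2 / (p_2 p_3) + \cdots$ in~$\Weight (2)$ together with the contributions of orders~$\geq 3$~-- gives the two stated fixation conditions.
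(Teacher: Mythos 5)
Your fluctuation argument and your treatment of the case $d>3\threshold$ follow the paper essentially verbatim: the partition $V_1=\{\mathbf 0,\mathbf 1\}$, $V_2=\VO\setminus V_1$ is exactly the paper's antipodal-pair partition, and the cancellation of $-B_1$ against the high-order terms via $\binom{d}{s}=\binom{d}{d-s}$ is the paper's computation $2^{-d}S(\OO,\threshold)=2\,h(d)=2$ at $d=3\threshold+1$ followed by monotonicity in $d$. Those parts are fine. The problem is the second fixation condition, where you have correctly set up the framework (the hypergeometric representation $\sum_{s\le\threshold}f(s_-,s_+,s)=h(s_+)\,P(Z\ge K)$ with $Z=\hypergeometric(d,s_-,s_+)$ and $K=\ceil{(s_-+s_+-\threshold)/2}$, and the reduction to a bound on $p_2$) but have not proved the one estimate that carries all the weight. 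Your claim that the maximum of $P(Z\ge K)$ over admissible $(s_-,s_+)$ is attained at $s_-=1$, $s_+=\threshold+1$ and tends to $\threshold/d$ is asserted, not established, and you acknowledge as much. This is not a routine verification: for pairs such as $(s_-,s_+)=(\threshold,\threshold+1)$ with $d$ close to $2\threshold$, the mean $s_-s_+/d$ of $Z$ sits only $O(\sigma\threshold)$ below the cutoff $K\approx(\threshold+1)/2$, so a bare central-limit heuristic gives $P(Z\ge K)$ close to $1/2$ and one needs a quantitative large-deviation input (the paper uses Hoeffding's inequality for the hypergeometric) that is only effective when $s_-\ge\sigma\threshold$; for small $s_-$ concentration fails entirely and the paper instead uses a combinatorial symmetry argument ($\binom{s_-}{a}\binom{d-s_-}{s_+-a}\le\binom{s_-}{a}\binom{d-s_-}{s_+-s_-+a}$ for $a\ge K$) to get $P(Z\ge K)\le P(Z<K)$. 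This two-regime case analysis is the actual content of the proof and is entirely missing from your proposal.

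One genuine point in your favor: your final combination $S_{\reg}(\OO,\threshold)\ge -B_1+\Weight(2)\,(2^d-1-B_1)$, exploiting that $\Weight$ is nondecreasing and that $B_1=\sum_{s\le\threshold}\binom{d}{s}$ is an exponentially small fraction of $2^d$ when $\threshold<d/2$, would in principle require only $p_2\le 1-c$ for some fixed $c>0$ rather than the paper's $p_2\le 1/2$ (the paper instead needs the full factor $1/p_2\ge 2$ to run its termwise pairing $h(\threshold+1)+\cdots+h(d)\ge h(1)+\cdots+h(\threshold)+h(d)$). This is a legitimate potential simplification. But even the weaker bound $p_2\le 1-c$, uniform over all admissible $(s_-,s_+)$, still requires controlling the near-critical pairs described above, so the gap remains: the hypergeometric tail lemma has to be proved, and proving it forces essentially the same small-$s_-$/large-$s_-$ dichotomy that the paper carries out.
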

\begin{table}[t]
\begin{center}
\begin{tabular}{cccccc}
\hline \noalign{\vspace*{2pt}}
 opinion graph   & radius                     & diameter                     & fluctuation                & fix. ($\threshold = 1$)  & fix. ($\threshold$ large)                              \\ \noalign{\vspace*{1pt}} \hline \noalign{\vspace*{6pt}}
 path            & $\radius = \integer{F/2}$  & $\diameter = F - 1$          & $F \leq 2 \threshold + 1$  & $F \geq 6$               & $F / \threshold > (10 + \sqrt{10}) / 3  \approx 4.39$  \\ \noalign{\vspace*{3pt}}
 star~($b = 3$)  & $\radius = r$              & $\diameter = 2r$             & $r \leq \threshold$        & $r \geq 2$               & $r / \threshold > (11 + \sqrt{17}) / 8  \approx 1.89$  \\ \noalign{\vspace*{3pt}}
 star~($b = 5$)  & $\radius = r$              & $\diameter = 2r$             & $r \leq \threshold$        & $r \geq 2$               & $r / \threshold > (21 + \sqrt{41}) / 16 \approx 1.71$  \\ \noalign{\vspace*{3pt}}
 cycle           & $\radius = \integer{F/2}$  & $\diameter = \integer{F/2}$  & $F \leq 2 \threshold + 2$  & $F \geq 6$               & $F / \threshold > 4$                                   \\ \noalign{\vspace*{3pt}}
 hypercube       & $\radius = d$              & $\diameter = d$              & $d \leq \threshold + 1$    & $d \geq 3$               & $d / \threshold > 2$                                   \\ \noalign{\vspace*{8pt}} \hline \noalign{\vspace*{3pt}}
 opinion graph   & radius                     & diameter                     & fluctuation                & \multicolumn{2}{c}{fixation when}                                                 \\ \noalign{\vspace*{1pt}} \hline \noalign{\vspace*{6pt}}
 tetrahedron     & $\radius = 1$              & $\diameter = 1$              & $\threshold \geq 1$        & \multicolumn{2}{c}{$\threshold = 0$}                                              \\ \noalign{\vspace*{3pt}}
 cube            & $\radius = 3$              & $\diameter = 3$              & $\threshold \geq 2$        & \multicolumn{2}{c}{$\threshold \leq 1$}                                           \\ \noalign{\vspace*{3pt}}
 octahedron      & $\radius = 2$              & $\diameter = 2$              & $\threshold \geq 1$        & \multicolumn{2}{c}{$\threshold = 0$}                                              \\ \noalign{\vspace*{3pt}}
 dodecahedron    & $\radius = 5$              & $\diameter = 5$              & $\threshold \geq 4$        & \multicolumn{2}{c}{$\threshold \leq 2$}                                           \\ \noalign{\vspace*{3pt}}
 icosahedron     & $\radius = 3$              & $\diameter = 3$              & $\threshold \geq 2$        & \multicolumn{2}{c}{$\threshold \leq 1$}                                           \\ \noalign{\vspace*{4pt}} \hline
\end{tabular}
\end{center}
\caption{\upshape{Summary of our results for the opinion graphs in Figure~\ref{fig:graphs}}}
\label{tab:summary}
\end{table}
 Table~\ref{tab:summary} summarizes our results for the graphs of Figure~\ref{fig:graphs}.
 The second and third columns give the value of the radius and the diameter.
 The conditions in the fourth column are the conditions for fluctuation of the infinite system obtained from the corollaries.
 For opinion graphs with a variable number of vertices, the last two columns give sufficient conditions for fixation in the two extreme cases when
 the confidence threshold is one and when the confidence threshold is large.
 To explain the last column for paths and stars, note that the opinion model fixates whenever~$\diameter / \threshold$
 is larger than the largest root of the polynomials
 $$ \begin{array}{rl}
      3 X^2 - 20 X + 30 & \hbox{for the path} \vspace*{3pt} \\
      2 X^2 - 11 X + 13 & \hbox{for the star with~$b = 3$ branches} \vspace*{3pt} \\
      4 X^2 - 21 X + 25 & \hbox{for the star with~$b = 5$ branches} \end{array} $$
 and the diameter of the opinion graph is sufficiently large.
 These polynomials are obtained from the conditions in Corollaries~\ref{cor:path}--\ref{cor:star} by only keeping the terms with degree two.


\section{Coupling with a system of annihilating particles}
\label{sec:coupling}

\indent To study the one-dimensional system, it is convenient to construct the process from a graphical representation and to introduce
 a coupling between the opinion model and a certain system of annihilating particles that keeps track of the discrepancies along the edges
 of the lattice rather than the opinion at each vertex.
 This system of particles can also be constructed from the same graphical representation.
 Since the opinion model on general finite graphs will be studied using other techniques, we only define the graphical representation
 for the process on~$\Z$, which consists of the following collection of independent Poisson processes:
\begin{itemize}
 \item For each~$x \in \Z$, we let~$(N_t (x, x \pm 1) : t \geq 0)$ be a rate one Poisson process. \vspace*{3pt}
 \item We denote by~$T_n (x, x \pm 1) := \inf \,\{t : N_t (x, x \pm 1) = n \}$ its~$n$th arrival time.
\end{itemize}
 This collection of independent Poisson processes is then turned into a percolation structure by drawing an arrow~$x \to x \pm 1$
 at time~$t := T_n (x, x \pm 1)$.
 We say that this arrow is {\bf active} when
 $$ d (\eta_{t-} (x), \eta_{t-} (x \pm 1)) \ \leq \ \threshold. $$
 The configuration at time~$t$ is then obtained by setting
\begin{equation}
\label{eq:rule}
  \begin{array}{rcll}
   \eta_t (x \pm 1) & = & \eta_{t-} (x) & \hbox{when the arrow~$x \to x \pm 1$ is active} \vspace*{3pt} \\
                    & = & \eta_{t-} (x \pm 1) & \hbox{when the arrow~$x \to x \pm 1$ is not active} \end{array}
\end{equation}
 and leaving the opinion at all the other vertices unchanged.
 An argument due to Harris \cite{harris_1972} implies that the opinion model starting from any configuration can indeed
 be constructed using this percolation structure and rule~\eqref{eq:rule}.
 From the collection of active arrows, we construct active paths as in percolation theory.
 More precisely, we say that there is an {\bf active path} from~$(z, s)$ to~$(x, t)$, and write~$(z, s) \leadsto (x, t)$, whenever there exist
 $$ s_0 = s < s_1 < \cdots < s_{n + 1} = t \qquad \hbox{and} \qquad
    x_0 = z, \,x_1, \,\ldots, \,x_n = x $$
 such that the following two conditions hold:
\begin{itemize}
 \item For~$j = 1, 2, \ldots, n$, there is an active arrow~$x_{j - 1} \to x_j$ at time~$s_j$. \vspace*{3pt}
 \item For~$j = 0, 1, \ldots, n$, there is no active arrow that points at~$\{x_j \} \times (s_j, s_{j + 1})$.
\end{itemize}
 These two conditions imply that
 $$ \hbox{for all} \ (x, t) \in \Z \times \R_+ \ \hbox{there is a unique} \ z \in \Z \ \hbox{such that} \ (z, 0) \leadsto (x, t). $$
 Moreover, because of the definition of active arrows, the opinion at vertex~$x$ at time~$t$ originates from and is therefore equal to the
 initial opinion at vertex~$z$ so we call vertex~$z$ the {\bf ancestor} of vertex~$x$ at time~$t$.

\indent As previously mentioned, to study the one-dimensional system, we look at the process that keeps track of the discrepancies along the edges
 rather than the actual opinion at each vertex, that we shall call the {\bf system of piles}.
 To define this process, it is convenient to identify each edge with its midpoint and to define translations on the edge set as follows:
 $$ \begin{array}{rclcl}
          e & := & \{x, x + 1 \} \ \equiv \ x + 1/2         & \hbox{for all} & x \in \Z \vspace*{3pt} \\
      e + v & := & \{x, x + 1 \} + v \ \equiv \ x + 1/2 + v & \hbox{for all} & (x, v) \in \Z \times \R. \end{array} $$
 The system of piles is then defined as
 $$ \xi_t (e) \ := \ d (\eta_t (e - 1/2), \eta_t (e + 1/2)) \quad \hbox{for all} \quad e \in \Z + 1/2, $$
 and it is convenient to think of edge~$e$ as being occupied by a pile of~$\xi_t (e)$ particles.
 The dynamics of the opinion model induces the following evolution rules on this system of particles.
 Assuming that there is an arrow~$x - 1 \to x$ at time~$t$ and that
 $$ \begin{array}{rcl}
    \xi_{t-} (x - 1/2) & := & d (\eta_{t-} (x), \eta_{t-} (x - 1)) \ = \ s_- \vspace*{3pt} \\
    \xi_{t-} (x + 1/2) & := & d (\eta_{t-} (x), \eta_{t-} (x + 1)) \ = \ s_+ \end{array} $$
 we have the following alternative:
\begin{itemize}
 \item In case~$s_- = 0$, meaning that there is no particle on the edge, the two interacting agents already agree just before the interaction therefore nothing happens. \vspace*{3pt}
 \item In case~$s_- > \threshold$, meaning that there are more than~$\threshold$ particles on the edge, the two interacting agents disagree too much to trust each other so nothing happens. \vspace*{3pt}
 \item In case~$0 < s_- \leq \threshold$, meaning that there is at least one but no more than~$\threshold$ particles on the edge, the agent at vertex~$x$ mimics her left neighbor, which gives
   $$ \begin{array}{rcl}
      \xi_t (x - 1/2) & := & d (\eta_t (x), \eta_t (x - 1)) \ = \ d (\eta_{t-} (x - 1), \eta_{t-} (x - 1)) \ = \ 0 \vspace*{3pt} \\
      \xi_t (x + 1/2) & := & d (\eta_t (x), \eta_t (x + 1)) \ = \ d (\eta_{t-} (x - 1), \eta_{t-} (x + 1)). \end{array} $$
   In particular, there is no more particles at edge~$x - 1/2$.
   In addition, the size~$s$ of the pile of particles at edge~$x + 1/2$ at time~$t$, where size of a pile refers to the number of particles
   in that pile, satisfies the two inequalities
\begin{equation}
\label{eq:size}
    \begin{array}{rcl}
      s & \leq & |d (\eta_{t-} (x - 1), \eta_{t-} (x)) + d (\eta_{t-} (x), \eta_{t-} (x + 1))| \ = \ |s_- + s_+| \vspace*{3pt} \\
      s & \geq & |d (\eta_{t-} (x - 1), \eta_{t-} (x)) - d (\eta_{t-} (x), \eta_{t-} (x + 1))| \ = \ |s_- - s_+|. \end{array}
\end{equation}
   Note that the first inequality implies that the process involves deaths of particles but no births, which is a key property that will be used later.
\end{itemize}
 Similar evolution rules are obtained by exchanging the direction of the interaction from which we deduce the following description
 for the dynamics of piles:
\begin{itemize}
 \item Piles with more than~$\threshold$ particles cannot move: we call such piles {\bf frozen piles} and the particles in such piles frozen particles. \vspace*{3pt}
 \item Piles with at most~$\threshold$ particles jump one unit to the left or to the right at rate one: we call such piles {\bf active piles} and the particles in such piles active particles.
  Note that arrows in the graphical representation are active if and only if they cross an active pile. \vspace*{3pt}
 \item When a pile of size~$s_-$ jumps onto a pile of size~$s_+$ this results in a pile whose size~$s$ satisfies the two inequalities in~\eqref{eq:size}
  so we say that~$s_- + s_+ - s$ particles are {\bf annihilated}.
\end{itemize}


\section{Proof of Theorem~\ref{th:fluctuation}}
\label{sec:fluctuation}

\indent Before proving the theorem, we start with some preliminary remarks.
 To begin with, we observe that, when the diameter~$\diameter \leq \threshold$, the~$\threshold$-center covers all the opinion graph,
 indicating that the model reduces to a multitype voter model with~$F = \card \VO$ opinions.
 In this case, all three parts of the theorem are trivial, with the probability of consensus in the last part being equal to one.
 To prove the theorem in the nontrivial case~$\threshold < \diameter$, we introduce the set
\begin{equation}
\label{eq:boundary}
  B (\OO, \threshold) \ := \ \{i \in \VO : d (i, j) > \threshold \ \hbox{for some} \ j \in \VO \}
\end{equation}
 and call this set the~{\bf $\threshold$-boundary} of the opinion graph.
 One key ingredient to our proof is the following lemma, which gives a sufficient condition for~\eqref{eq:fluctuation} to hold.
\begin{lemma} --
\label{lem:partition}
 The sets~$V_1 = C (\OO, \threshold)$ and~$V_2 = B (\OO, \threshold)$ satisfy~\eqref{eq:fluctuation} when~$\radius \leq \threshold < \diameter$.
\end{lemma}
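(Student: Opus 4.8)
The plan is to verify directly the two requirements hidden in \eqref{eq:fluctuation}: that $\{V_1,V_2\} = \{C(\OO,\threshold),B(\OO,\threshold)\}$ is an honest partition of $\VO$, and that every pair $(i,j) \in V_1 \times V_2$ has opinion distance at most $\threshold$. First I would record the purely set-theoretic observation that $B(\OO,\threshold)$ is, by the definitions \eqref{eq:center} and \eqref{eq:boundary}, exactly the complement of $C(\OO,\threshold)$ in $\VO$: a vertex $i$ either satisfies $d(i,j) \leq \threshold$ for every $j \in \VO$, in which case $i \in C(\OO,\threshold)$, or it fails this for at least one $j$, in which case $i \in B(\OO,\threshold)$. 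Hence $C(\OO,\threshold) \cup B(\OO,\threshold) = \VO$ and the two sets are disjoint, so the only remaining point for ``partition'' is that neither set is empty.

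Next I would use the two inequalities $\radius \leq \threshold < \diameter$, each for one nonemptiness. For the center: by the definition of the radius there is a vertex $i_0 \in \VO$ whose eccentricity $\max_{j \in \VO} d(i_0,j)$ equals $\radius$, and since $\radius \leq \threshold$ this says $d(i_0,j) \leq \threshold$ for all $j$, i.e. $i_0 \in C(\OO,\threshold)$, so $C(\OO,\threshold) \neq \emptyset$. For the boundary: by the definition of the diameter there are vertices $i_1,j_1 \in \VO$ with $d(i_1,j_1) = \diameter > \threshold$, so $i_1 \in B(\OO,\threshold)$, giving $B(\OO,\threshold) \neq \emptyset$. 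Thus $\{V_1,V_2\}$ is a partition of $\VO$ into two nonempty sets.

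Finally I would check the distance condition: fix any $(i,j) \in V_1 \times V_2 = C(\OO,\threshold) \times B(\OO,\threshold)$. Since $i$ lies in the $\threshold$-center, the definition \eqref{eq:center} gives $d(i,j') \leq \threshold$ for \emph{every} $j' \in \VO$, in particular for $j' = j$, which is precisely the inequality required by \eqref{eq:fluctuation}. I do not expect any genuine obstacle here: the whole content is the tautology that being in the $\threshold$-center means being within distance $\threshold$ of everything, together with the elementary facts that the center is nonempty as soon as $\threshold \geq \radius$ and is a proper subset of $\VO$ as soon as $\threshold < \diameter$ — so the partition $\{C(\OO,\threshold),B(\OO,\threshold)\}$ automatically witnesses \eqref{eq:fluctuation}.
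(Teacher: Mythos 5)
Your proposal is correct and follows essentially the same route as the paper's proof: both verify that $C(\OO,\threshold)$ and $B(\OO,\threshold)$ are complementary, deduce nonemptiness of the center from $\radius \leq \threshold$ and of the boundary from $\threshold < \diameter$, and then observe that the distance condition in~\eqref{eq:fluctuation} is immediate from the definition of the $\threshold$-center.
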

\begin{proof}
 From~\eqref{eq:center} and~\eqref{eq:boundary}, we get~$B (\OO, \threshold) = \VO \setminus C (\OO, \threshold)$ therefore
 $$ C (\OO, \threshold) \,\cup \,B (\OO, \threshold) = V \quad \hbox{and} \quad C (\OO, \threshold) \,\cap \,B (\OO, \threshold) = \varnothing. $$
 In addition, the~$\threshold$-center of the graph is nonempty because
\begin{equation}
\label{eq:center-radius}
  \begin{array}{rcl}
    C (\OO, \threshold) \neq \varnothing & \hbox{if and only if} & \hbox{there is~$i \in \VO$ such that~$d (i, j) \leq \threshold$ for all~$j \in \VO$} \vspace*{3pt} \\
                                         & \hbox{if and only if} & \hbox{there is~$i \in \VO$ such that~$\max_{j \in \VO} \,d (i, j) \leq \threshold$} \vspace*{3pt} \\
                                         & \hbox{if and only if} & \min_{i \in \VO} \,\max_{j \in \VO} \,d (i, j) \leq \threshold \vspace*{3pt} \\
                                         & \hbox{if and only if} & \radius \leq \threshold \end{array}
\end{equation}
 while the~$\threshold$-boundary is nonempty because
 $$ \begin{array}{rcl}
      B (\OO, \threshold) \neq \varnothing & \hbox{if and only if} & \hbox{there is~$i \in \VO$ such that~$d (i, j) > \threshold$ for some~$j \in \VO$} \vspace*{3pt} \\
                                           & \hbox{if and only if} & \hbox{there are~$i, j \in \VO$ such that~$d (i, j) > \threshold$} \vspace*{3pt} \\
                                           & \hbox{if and only if} & \max_{i \in \VO} \,\max_{j \in \VO} \,d (i, j) > \threshold \vspace*{3pt} \\
                                           & \hbox{if and only if} & \diameter > \threshold. \end{array} $$
 This shows that~$\{V_1, V_2 \}$ is a partition of the set of opinions.
 Finally, since all the vertices in the~$\threshold$-center are within distance~$\threshold$ of all the other vertices, condition~\eqref{eq:fluctuation} holds.
\end{proof} \\ \\
 This lemma will be used in the proof of part b where clustering will follow from fluctuation, and in the proof of part c to show that
 the probability of consensus on any finite connected graph is indeed positive.
 From now on, we call vertices in the~$\threshold$-center the centrist opinions and vertices in the~$\threshold$-boundary the extremist opinions. \\ \\
\begin{demo}{Theorem~\ref{th:fluctuation}a (fluctuation)} --
 Under condition~\eqref{eq:fluctuation}, agents who support an opinion in the set~$V_1$ are within the confidence threshold of
 agents who support an opinion in~$V_2$, therefore we deduce from the expression of the transition rates~\eqref{eq:rates} that
\begin{equation}
\label{eq:fluctuation-1}
  \begin{array}{rcl}
     c_{i \to j} (x, \eta_t) & = &
          \lim_{h \to 0} \ (1/h) \,P \,(\eta_{t + h} (x) = j \,| \,\eta_t \ \hbox{and} \ \eta_t (x) = i) \vspace*{4pt} \\ & = &
          \card \{y \in N_x :  \eta_t (y) = j \} \end{array}
\end{equation}
 for every~$(i, j) \in V_1 \times V_2$ and every~$(i, j) \in V_2 \times V_1$. Let
\begin{equation}
\label{eq:fluctuation-3}
  \zeta_t (x) \ := \ \ind \{\eta_t (x) \in V_2 \} \quad \hbox{for all} \quad x \in \Z.
\end{equation}
 Since, according to~\eqref{eq:fluctuation-1}, we have
\begin{itemize}
 \item for all~$j \in V_2$, the rates~$c_{i \to j} (x, \eta_t)$ are constant across all~$i \in V_1$, \vspace*{4pt}
 \item for all~$i \in V_1$, the rates~$c_{j \to i} (x, \eta_t)$ are constant across all~$j \in V_2$,
\end{itemize}
 the process~$(\zeta_t)$ is Markov with transition rates
 $$ \begin{array}{rrl}
     c_{0 \to 1} (x, \zeta_t) & := &
          \lim_{h \to 0} \ (1/h) \,P \,(\zeta_{t + h} (x) = 1 \,| \,\zeta_t \ \hbox{and} \ \zeta_t (x) = 0) \vspace*{4pt} \\ & = &
          \sum_{i \in V_1} \sum_{j \in V_2} c_{i \to j} (x, \eta_t) \,P \,(\eta_t (x) = i \,| \,\zeta_t (x) = 0) \vspace*{4pt} \\ & = &
          \sum_{i \in V_1} \sum_{j \in V_2} \card \{y \in N_x :  \eta_t (y) = j \} \,P \,(\eta_t (x) = i \,| \,\zeta_t (x) = 0) \vspace*{4pt} \\ & = &
          \sum_{j \in V_2} \card \{y \in N_x : \eta_t (y) = j \} \ = \ \card \{y \in N_x : \zeta_t (y) = 1 \} \end{array} $$
 and similarly for the reverse transition
 $$ \begin{array}{l}
     c_{1 \to 0} (x, \zeta_t) \ = \ \card \{y \in N_x : \eta_t (y) \in V_1 \} \ = \ \card \{y \in N_x : \zeta_t (y) = 0 \}. \end{array} $$
 This shows that~$(\zeta_t)$ is the voter model.
 In addition, since~$V_1, V_2 \neq \varnothing$,
 $$ \begin{array}{l} P \,(\zeta_0 (x) = 0) \ = \ P \,(\eta_0 (x) \in V_1) \ = \ \sum_{j \in V_1} \,\rho_j \ \in \ (0, 1) \end{array} $$
 whenever condition~\eqref{eq:product} holds.
 In particular, the lemma follows from the fact that the one-dimensional voter model starting with a positive density of each type fluctuates.
 This last result is a consequence of site recurrence for annihilating random walks proved in \cite{arratia_1983}.
\end{demo} \\ \\
\begin{demo}{Theorem~\ref{th:fluctuation}b (clustering)} --
 Since~$\radius \leq \threshold < \diameter$,
 $$ V_1 \ = \ C (\OO, \threshold) \quad \hbox{and} \quad V_2 \ = \ B (\OO, \threshold) $$
 form a partition of~$\VO$ according to Lemma~\ref{lem:partition}.
 This implies in particular that, not only the opinion model fluctuates, but also the coupled voter model~\eqref{eq:fluctuation-3}
 for this specific partition fluctuates, which is the key to the proof.
 To begin with, we define the function
 $$ \begin{array}{l} u (t) \ := \ E \,\xi_t (e) \ = \ \sum_{0 \leq j \leq \diameter} \,j \,P \,(\xi_t (e) = j) \end{array} $$
 which, in view of translation invariance of the initial configuration and the evolution rules, does not depend on the choice of~$e$.
 Note that, since the system of particles coupled with the process involves deaths of particles but no births, the function~$u (t)$ is nonincreasing in time.
 Since it is also nonnegative, it has a limit:~$u (t) \to l$ as~$t \to \infty$.
 Now, on the event that an edge~$e$ is occupied by a pile with at least one particle at a given time~$t$, we have the following alternative:
\begin{itemize}
 \item[(1)] In case edge~$e := x + 1/2$ carries a frozen pile, since the centrist agents are within the confidence threshold of all the other individuals, we must have
  $$ \eta_t (x) \in V_2 = B (\OO, \threshold) \quad \hbox{and} \quad \eta_t (x + 1) \in V_2 = B (\OO, \threshold). $$
  Now, using that the voter model~\eqref{eq:fluctuation-3} fluctuates,
  $$ T \ := \ \inf \,\{s > t : \eta_s (x) \in V_1 = C (\OO, \threshold) \ \hbox{or} \ \eta_s (x + 1) \in V_1 = C (\OO, \threshold) \} < \infty $$
  almost surely, while by definition of the~$\threshold$-center, we have
  $$ \xi_T (e) \ = \ d (\eta_T (x), \eta_T (x + 1)) \ \leq \ \threshold \ < \ \xi_t (e). $$
  In particular, at least one of the frozen particles at~$e$ is annihilated eventually. \vspace*{4pt}
 \item[(2)] In case edge~$e := x + 1/2$ carries an active pile, since one-dimensional symmetric random walks are recurrent, this pile eventually intersects
  another pile.
  Let~$s_-$ and~$s_+$ be respectively the size of these two piles and let~$s$ be the size of the pile of particles resulting from their intersection.
  Then, we have the following alternative:
 \begin{itemize}
  \item[(a)] In case~$s < s_- + s_+$ and~$s > \threshold$, at least one particle is annihilated and there is either formation or increase of a frozen pile so we are
   back to case~(1): since the voter model coupled with the opinion model fluctuates, at least one of the frozen particles in this pile is annihilated eventually.
  \item[(b)] In case~$s < s_- + s_+$ and~$s \leq \threshold$, at least one particle is annihilated.
  \item[(c)] In case~$s = s_- + s_+$ and~$s > \threshold$, there is either formation or increase of a frozen pile so we are back to case~(1):
   since the voter model coupled with the opinion model fluctuates, at least one of the frozen particles in this pile is annihilated eventually.
  \item[(d)] In case~$s = s_- + s_+$ and~$s \leq \threshold$, the resulting pile is again active so it keeps moving until, after a finite number of collisions, we are back to either~(a) or~(b) or~(c)
   and at least one particle is annihilated eventually.
 \end{itemize}
\end{itemize}
 This shows that there is a sequence~$0 < t_1 < \cdots < t_n < \cdots < \infty$ such that
 $$ u (t_n) \ \leq \ (1/2) \,u (t_{n - 1}) \ \leq \ (1/4) \,u (t_{n - 2}) \ \leq \ \cdots \ \leq \ (1/2)^n \,u (0) \ \leq \ (1/2)^n \,F $$
 from which it follows that the density of particles decreases to zero:
 $$ \begin{array}{l} \lim_{t \to \infty} \,P \,(\xi_t (e) \neq 0) \ \leq \ \lim_{t \to \infty} \,u (t) \ = \ 0 \quad \hbox{for all} \quad e \in \Z + 1/2. \end{array} $$
 In conclusion, for all~$x, y \in \Z$ with~$x < y$, we have
 $$ \begin{array}{rcl}
    \lim_{t \to \infty} \,P \,(\eta_t (x) \neq \eta_t (y)) & \leq &
    \lim_{t \to \infty} \,P \,(\xi_t (z + 1/2) \neq 0 \ \hbox{for some} \ x \leq z < y) \vspace*{4pt} \\ & \leq &
    \lim_{t \to \infty} \,\sum_{x \leq z < y} \,P \,(\xi_t (z + 1/2) \neq 0) \vspace*{4pt} \\ & = &
     (y - x) \lim_{t \to \infty} \,P \,(\xi_t (e) \neq 0) \ = \ 0, \end{array} $$
 which proves clustering.
\end{demo} \\ \\
 The third part of the theorem, which gives a lower bound for the probability of consensus of the process on finite connected graphs,
 relies on very different techniques, namely techniques related to martingale theory following an idea from \cite{lanchier_2010}, section 3.
 However, the partition of the opinion set into centrist opinions and extremist opinions is again a key to the proof. \\ \\
\begin{demo}{Theorem~\ref{th:fluctuation}c (consensus)} --
 We first prove that the process that keeps track of the number of supporters of any given opinion is a martingale.
 Then, applying the optional stopping theorem, we obtain a lower bound for the probability of extinction of the extremist agents, which is
 also a lower bound for the probability of consensus.
 For every~$j \in \VO$, we set
 $$ X_t (j) \ := \ \card \{x \in \VG : \eta_t (x) = j \} \quad \hbox{and} \quad X_t \ := \ \card \{x \in \VG : \eta_t (x) \in C (\OO, \threshold) \} $$
 and we observe that
\begin{equation}
\label{eq:consensus-1}
 \begin{array}{l}
   X_t \ = \ \sum_{j \in C (\OO, \threshold)} X_t (j). \end{array}
\end{equation}
 Letting~$\mathcal F_t$ denote the natural filtration of the process, we also have
 $$ \begin{array}{l}
    \lim_{h \to 0} \ (1/h) \,E \,(X_{t + h} (j) - X_t (j) \,| \,\mathcal F_t) \vspace*{4pt} \\ \hspace*{25pt} = \
    \lim_{h \to 0} \ (1/h) \,P \,(X_{t + h} (j) - X_t (j) = 1 \,| \,\mathcal F_t) \vspace*{4pt} \\ \hspace*{40pt} - \
    \lim_{h \to 0} \ (1/h) \,P \,(X_{t + h} (j) - X_t (j) = - 1 \,| \,\mathcal F_t) \vspace*{4pt} \\ \hspace*{25pt} = \
    \card \{(x, y) \in \EG : \eta_t (x) \neq j \ \hbox{and} \ \eta_t (y) = j \ \hbox{and} \ |\eta_t (x) - j| \leq \threshold \} \vspace*{4pt} \\ \hspace*{40pt} - \
    \card \{(x, y) \in \EG : \eta_t (x) = j \ \hbox{and} \ \eta_t (y) \neq j \ \hbox{and} \ |\eta_t (y) - j| \leq \threshold \} \ = \ 0. \end{array} $$
 This shows that the process~$X_t (j)$ is a martingale with respect to the natural filtration of the opinion model.
 This, together with equation~\eqref{eq:consensus-1}, implies that~$X_t$ also is a martingale.
 Because of the finiteness of the graph, this martingale is bounded and gets trapped in an absorbing state after an almost surely
 finite stopping time:
 $$ T \ := \ \inf \,\{t : \eta_t = \eta_s \ \hbox{for all} \ s > t \} \ < \ \infty \quad \hbox{almost surely}. $$
 We claim that~$X_T$ can only take two values:
\begin{equation}
\label{eq:consensus-2}
  X_T \in \{0, N \} \quad \hbox{where} \quad \hbox{$N := \card (\VG)$ = the population size}.
\end{equation}
 Indeed, assuming by contradiction that~$X_T \notin \{0, N \}$ implies the existence of an absorbing state with at least one centrist
 agent and at least one extremist agent.
 Since the graph is connected, this further implies the existence of an edge~$e = (x, y)$ such that
 $$ \eta_T (x) \in C (\OO, \threshold) \quad \hbox{and} \quad \eta_T (y) \in B (\OO, \threshold) $$
 but then we have
 $$ \eta_T (x) \neq \eta_T (y) \quad \hbox{and} \quad d (\eta_T (y), \eta_T (x)) \leq \threshold $$
 showing that~$\eta_T$ is not an absorbing state, in contradiction with the definition of time~$T$.
 This proves that our claim~\eqref{eq:consensus-2} is true.
 Now, applying the optional stopping theorem to the bounded martingale~$X_t$ and the almost surely finite stopping time~$T$,
 we obtain
 $$ \begin{array}{l} E X_T \ = \ E X_0 \ = \ N \times P \,(\eta_0 (x) \in C (\OO, \threshold)) \ = \ N \times \sum_{j \in C (\OO, \threshold)} \,\rho_j \ = \ N \rho_{\cent} \end{array} $$
 which, together with~\eqref{eq:consensus-2}, implies that
\begin{equation}
\label{eq:consensus-3}
 \begin{array}{rcl}
   P \,(X_T = N) & = & (1/N)(0 \times P \,(X_T = 0) + N \times P \,(X_T = N)) \vspace*{4pt} \\
                 & = & (1/N) \ E X_T \ = \ \rho_{\cent}. \end{array}
\end{equation}
 To conclude, we observe that, on the event that~$X_T = N$, all the opinions present in the system at the time to absorption
 are centrist opinions and since the only absorbing states with only centrist opinions are the configurations in which all the agents
 share the same opinion, we deduce that the system converges to a consensus.
 This, together with~\eqref{eq:consensus-3}, implies that
 $$ P \,(\eta_t \equiv \hbox{constant for some} \ t > 0) \ \geq \ P \,(X_T = N) \ = \ \rho_{\cent}. $$
 Finally, since the threshold is at least equal to the radius, it follows from~\eqref{eq:center-radius} that
 the~$\threshold$-center is nonempty, so we have~$\rho_{\cent} > 0$.
 This completes the proof of Theorem~\ref{th:fluctuation}.
\end{demo}


\section{Sufficient condition for fixation}
\label{sec:condition}

\indent This section and the next two ones are devoted to the proof of Theorem~\ref{th:fixation} which studies the fixation regime
 of the infinite one-dimensional system.
 In this section, we give a general sufficient condition for fixation that can be expressed based on the initial number of active
 particles and frozen particles in a large random interval.
 The main ingredient of the proof is a construction due to Bramson and Griffeath~\cite{bramson_griffeath_1989} based on
 duality-like techniques looking at active paths.
 The next section establishes large deviation estimates for the initial number of particles in order to simplify
 the condition for fixation using instead the expected number of active and frozen particles per edge.
 This is used in the subsequent section to prove Theorem~\ref{th:fixation}.
 The next lemma gives a condition for fixation based on properties of the active paths, which is the
 analog of~\cite[Lemma~2]{bramson_griffeath_1989}.
\begin{lemma} --
\label{lem:fixation-condition}
 For all~$z \in \Z$, let
 $$ T (z) \ := \ \inf \,\{t : (z, 0) \leadsto (0, t) \}. $$
 Then, the opinion model on~$\Z$ fixates whenever
\begin{equation}
\label{eq:fixation}
 \begin{array}{l} \lim_{N \to \infty} \,P \,(T (z) < \infty \ \hbox{for some} \ z < - N) \ = \ 0. \end{array}
\end{equation}
\end{lemma}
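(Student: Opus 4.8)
The plan is to read~\eqref{eq:fixation} as a confinement statement for the ancestral lineage of the origin and then to show that a confined lineage must eventually stop moving. Since the $N\to\infty$ limit in~\eqref{eq:fixation} is unchanged by a finite spatial shift, the hypothesis holds verbatim with the origin replaced by any~$x\in\Z$, so it suffices to prove that~$\eta_t(0)$ changes only finitely often almost surely. Recall from Section~\ref{sec:coupling} that~$\eta_t(0)=\eta_0(z(t))$, where~$z(t)$ is the unique vertex with~$(z(t),0)\leadsto(0,t)$; hence~$\eta_t(0)$ can change only when~$z(t)$ does, and~$z(t)$ changes only at the arrival times of active arrows pointing at~$0$, jumping at such a time to the then-current ancestor of the neighbour from which the arrow emanates.

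Next I would rewrite the hypothesis in terms of the set of all ancestors of the origin,
$$ A(0) \ := \ \{z\in\Z : T(z)<\infty\}. $$
The events~$\{T(z)<\infty$ for some~$z<-N\}$ are nonincreasing in~$N$, and their intersection over~$N\geq 1$ is exactly the event that~$A(0)$ is unbounded below; so~\eqref{eq:fixation} says precisely that~$A(0)$ is almost surely bounded below. Because the graphical representation is invariant in law under the reflection~$x\mapsto -x$, condition~\eqref{eq:fixation} simultaneously yields the mirror statement, so~$A(0)$ is almost surely bounded above as well, and therefore almost surely contained in some finite random interval~$[-N,N]$.

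It then remains to prove the core implication: on the event~$\{A(0)\subseteq[-N,N]\}$, the ancestor~$z(t)$ is eventually constant; this is the analog of~\cite[Lemma~2]{bramson_griffeath_1989} and I expect it to be the main obstacle. I would argue it by contradiction. If~$z(t)$ changed infinitely often, then infinitely many active arrows carrying a genuinely different opinion would point at~$0$, which forces an active pile of positive size --- a pile of size in~$\{1,\dots,\threshold\}$ in the sense of Section~\ref{sec:coupling} --- to pass through one of the two edges incident to~$0$ infinitely often. One then derives a contradiction with~$A(0)\subseteq[-N,N]$ by combining three ingredients: one-dimensional symmetric random walks are recurrent, so a surviving active pile returns near the origin repeatedly; the pile system has deaths but no births, so only finitely many particles can ever be annihilated inside a fixed finite block, which forces the offending active piles to be fed in from outside~$[-N,N]$ infinitely often; and one-dimensional ancestral lineages are ordered and cannot cross, which ties such incursions near the origin to the appearance of vertices of absolute value larger than~$N$ in~$A(0)$. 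Once~$z(t)$ is shown to be eventually constant, $\eta_t(0)=\eta_0(z(t))$ changes only finitely often, which is fixation at the origin, and by translation invariance the process fixates at every vertex. The bookkeeping in this last step --- quantifying how active particles that wander in and out of the window~$[-N,N]$ are constrained by the finite annihilation budget in its interior and by the non-crossing of lineages --- is the only substantial point; everything else is routine.
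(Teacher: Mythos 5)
Your setup is fine and matches the standard route: reduce to fixation at the origin by translation invariance, observe that the events in~\eqref{eq:fixation} decrease in~$N$ so the hypothesis says exactly that the ancestor set~$A(0):=\{z:T(z)<\infty\}$ is almost surely bounded below, and use reflection invariance of the graphical representation and of the initial product measure to get boundedness above as well, hence~$A(0)\subseteq[-N,N]$ for a random finite~$N$. (One small correction: what you must then show is that~$\eta_0(z(t))$ is eventually constant, not that~$z(t)$ is; the ancestor can keep jumping among sites of~$A(0)$ carrying the same opinion, so the stronger claim you announce is not the right target.)

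The problem is that the implication ``$A(0)\subseteq[-N,N]$ implies finitely many opinion changes at~$0$'' \emph{is} the lemma, and your three ingredients do not assemble into a proof of it. Recurrence of the symmetric walk performed by an active pile is the mechanism by which the origin could flip infinitely often, not an obstruction to it. The ``deaths but no births'' property bounds the total number of annihilations along any fixed collection of particles, but an active pile can cross the edges~$\pm1/2$ arbitrarily many times without a single annihilation occurring, and piles freely enter and leave any fixed window, so there is no ``finite annihilation budget inside~$[-N,N]$'' to exhaust. Finally, the non-crossing property does not directly convert a crossing of the origin at time~$t$ into a site~$z$ with~$|z|>N$ and~$T(z)<\infty$: the new ancestor produced by such a crossing is~$a(\pm1,t-)$, which lies in~$A(0)\subseteq[-N,N]$ by hypothesis, so no contradiction falls out of the crossing itself. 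The actual argument (Bramson--Griffeath, and Lemma~4 of~\cite{lanchier_scarlatos_2013}, which is all the paper invokes here) has to track the backward lineage of the origin between consecutive \emph{genuine} opinion changes and show that infinitely many of them force active paths into~$0$ from sources outside every bounded interval; that bookkeeping is precisely the content you defer, and as written your sketch gives no reason the single-surviving-active-pile scenario, or an active pile shuttling between frozen piles inside~$[-N,N]$, is incompatible with bounded ancestry. So the proposal identifies the right reduction but leaves the lemma's substance unproven.
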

\begin{proof}
 This follows closely the proof of~\cite[Lemma~4]{lanchier_scarlatos_2013}.
\end{proof} \\ \\
 To derive a more explicit condition for fixation, we let
 $$ H_N \ := \ \{T (z) < \infty \ \hbox{for some} \ z < - N \} $$
 be the event introduced in~\eqref{eq:fixation}.
 Following the construction in~\cite{bramson_griffeath_1989}, we also let~$\tau_N$ be the first time an active path starting from the
 left of~$- N$ hits the origin, and observe that
 $$ \tau_N \ = \ \inf \,\{T (z) : z \in (- \infty, - N) \}. $$
 In particular, the event~$H_N$ can be written as
\begin{equation}
\label{eq:key-event}
  \begin{array}{l} H_N \ = \ \bigcap_{z < - N} \, \{T (z) < \infty \} \ = \ \{\tau_N < \infty \}. \end{array}
\end{equation}
 Given the event~$H_N$, we let~$z_- < - N$ and~$z_+ \geq 0$ be the initial position of the active path and the rightmost
 source of an active path that reaches the origin by time~$\tau_N$, i.e.,
\begin{equation}
\label{eq:paths}
  \begin{array}{rcl}
    z_- & := & \,\min \,\{z \in \Z : (z, 0) \leadsto (0, \tau_N) \} \ < \ - N \vspace*{2pt} \\
    z_+ & := & \max \,\{z \in \Z : (z, 0) \leadsto (0, \sigma_N) \ \hbox{for some} \ \sigma_N < \tau_N \} \ \geq \ 0, \end{array}
\end{equation}
 and define~$I_N = (z_-, z_+)$.
 Now, we observe that, on the event~$H_N$,
\begin{itemize}
\item All the frozen piles initially in~$I_N$ must have been destroyed, i.e., turned into active piles due to the occurrence of
 annihilating events, by time~$\tau_N$. \vspace*{4pt}
\item The active particles initially outside the interval~$I_N$ cannot jump inside the space-time region delimited
 by the two active paths implicitly defined in~\eqref{eq:paths} because the existence of such particles would contradict
 the minimality of~$z_-$ or the maximality of~$z_+$.
\end{itemize}
 This, together with equation~\eqref{eq:key-event}, implies that, given the event~$H_N$, all the frozen piles initially in the random
 interval~$I_N$ must have been destroyed by either active piles initially in this interval or active piles that result from
 the destruction of these frozen piles.
 To quantify this statement, we attach random variables, that we call {\bf contributions}, to each edge.
 The definition depends on whether the edge initially carries an active pile or a frozen pile.
 To begin with, we give an arbitrary deterministic contribution, say~$-1$, to each pile initially active by setting
\begin{equation}
\label{eq:contribution-active}
 \cont (e) \ := \ - 1 \quad \hbox{whenever} \quad 0 < \xi_0 (e) \leq \threshold.
\end{equation}
 Now, we observe that, given~$H_N$, for each frozen pile initially in~$I_N$, a random number of active piles must have jumped onto this frozen
 pile to turn it into an active pile.
 Therefore, to define the contribution of a frozen pile, we let
\begin{equation}
\label{eq:breaks}
  T_e \ := \ \inf \,\{t > 0 : \xi_t (e) \leq \threshold \}
\end{equation}
 and define the contribution of a frozen pile initially at~$e$ as
\begin{equation}
\label{eq:contribution-frozen}
 \cont (e) \ := \ - 1 + \hbox{number of active piles that hit~$e$ until time~$T_e$}.
\end{equation}
 Note that~\eqref{eq:contribution-frozen} reduces to~\eqref{eq:contribution-active} when edge~$e$ carries initially an active pile since in this
 case the time until the edge becomes active is zero, therefore~\eqref{eq:contribution-frozen} can be used as the general definition for the contribution
 of an edge with at least one particle.
 Edges with initially no particle have contribution zero.
 Since the occurrence of~$H_N$ implies that all the frozen piles initially in~$I_N$ must have been destroyed by either active piles initially
 in this interval or active piles that result from the destruction of these frozen piles, in which case~$T_e < \infty$ for all the
 edges in the interval, and since particles in an active pile jump all at once rather than individually,
\begin{equation}
\label{eq:inclusion-1}
 \begin{array}{rcl}
    H_N & \subset & \{\sum_{e \in I_N} \,\cont (e \,| \,T_e < \infty) \leq 0 \} \vspace*{4pt} \\
        & \subset & \{\sum_{e \in (l, r)} \,\cont (e \,| \,T_e < \infty) \leq 0 \ \hbox{for some~$l < - N$ and some~$r \geq 0$} \}. \end{array}
\end{equation}
 Lemma~\ref{lem:fixation-condition} and~\eqref{eq:inclusion-1} are used in section~\ref{sec:fixation} together with the large
 deviation estimates for the number of active and frozen piles showed in the following section to prove Theorem~\ref{th:fixation}.


\section{Large deviation estimates}
\label{sec:deviation}

\indent In order to find later a good upper bound for the probability in~\eqref{eq:fixation} and deduce a sufficient condition for
 fixation of the opinion model, the next step is to prove large deviation estimates for the initial number of piles with~$s$~particles
 in a large interval.
 More precisely, the main objective of this section is to prove that for all~$s$ and all~$\ep > 0$ the probability that
 $$ \card \{e \in (0, N) : \xi_0 (e) = s \} \ \notin \ (1 - \ep, 1 + \ep) \ E \,(\card \{e \in (0, N) : \xi_0 (e) = s \}) $$
 decays exponentially with~$N$.
 Note that, even though it is assumed that the process starts from a product measure and therefore the initial opinions at different
 vertices are chosen independently, the initial states at different edges are not independent in general.
 When starting from the uniform product measure, these states are independent if and only if, for every size~$s$,
 $$ \card \{j \in \VO : d (i, j) = s \} \quad \hbox{does not depend on~$i \in \VO$}. $$
 This holds for cycles or hypercubes but not for graphs which are not vertex-transitive.
 When starting from more general product measures, the initial number of particles at different edges are not independent, even for
 very specific graphs.
 In particular, the number of piles of particles with a given size in a given interval does not simply reduce to a binomial random variable.

\indent The main ingredient to prove large deviation estimates for the initial number of piles with a given number of particles in
 a large spatial interval is to first show large deviation estimates for the number of so-called changeovers in a sequence of independent
 coin flips.
 Consider an infinite sequence of independent coin flips such that
 $$ P \,(X_t = H) = p \quad \hbox{and} \quad P \,(X_t = T) = q = 1 - p \quad \hbox{for all} \quad t \in \N $$
 where~$X_t$ is the outcome: heads or tails, at time~$t$.
 We say that a {\bf changeover} occurs whenever two consecutive coin flips result in two different outcomes.
 The expected value of the number of changeovers~$Z_N$ before time~$N$ can be easily computed by observing that
 $$ \begin{array}{l} Z_N \ = \ \sum_{0 \leq t < N} \,Y_t \quad \hbox{where} \quad Y_t \ := \ \ind \{X_{t + 1} \neq X_t \} \end{array} $$
 and by using the linearity of the expected value:
 $$ \begin{array}{rcl}
      E Z_N & = & \sum_{0 \leq t < N} \,E Y_t \vspace*{4pt} \\
            & = & \sum_{0 \leq t < N} \,P \,(X_{t + 1} \neq X_t) \ = \ N \,P \,(X_0 \neq X_1) \ = \ 2 N p q. \end{array} $$
 Then, we have the following result for the number of changeovers.
\begin{lemma} --
\label{lem:changeover}
 For all~$\ep > 0$, there exists~$c_0 > 0$ such that
 $$ P \,(Z_N - E Z_N \notin (- \ep N, \ep N)) \ \leq \ \exp (- c_0 N) \quad \hbox{for all~$N$ large}. $$
\end{lemma}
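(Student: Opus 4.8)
The plan is to prove the large deviation estimate for the number of changeovers $Z_N$ by a standard exponential Markov (Chernoff) argument, with the twist that the summands $Y_t = \ind\{X_{t+1} \neq X_t\}$ are not independent: consecutive $Y_t$ share a coin flip, so $Y_t$ and $Y_{t+1}$ are dependent, although $Y_t$ and $Y_{t'}$ are independent as soon as $|t - t'| \geq 2$. First I would exploit this ``$2$-dependence'' by splitting the sum $Z_N = \sum_{0 \leq t < N} Y_t$ into the even-indexed terms and the odd-indexed terms, namely $Z_N = Z_N^{\mathrm{even}} + Z_N^{\mathrm{odd}}$ where $Z_N^{\mathrm{even}} = \sum_{t \text{ even}} Y_t$ and $Z_N^{\mathrm{odd}} = \sum_{t \text{ odd}} Y_t$. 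Within each of these two sums, the summands are genuinely independent Bernoulli random variables with parameter $P(X_0 \neq X_1) = 2pq$, since any two distinct even (resp. odd) indices differ by at least $2$.

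The next step is to apply the classical Chernoff bound for sums of i.i.d. bounded random variables to each of the two sub-sums separately. For any $\ep > 0$, there is $c > 0$ such that $P(|Z_N^{\mathrm{even}} - E Z_N^{\mathrm{even}}| > (\ep/2) N) \leq \exp(-cN)$ for all $N$ large, and likewise for $Z_N^{\mathrm{odd}}$; here $E Z_N^{\mathrm{even}} + E Z_N^{\mathrm{odd}} = E Z_N = 2Npq$. Then a union bound gives that, off an event of probability at most $2\exp(-cN)$, both sub-sums are within $(\ep/2)N$ of their means, and hence $Z_N$ is within $\ep N$ of $E Z_N$ by the triangle inequality. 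Setting $c_0$ slightly smaller than $c$ absorbs the factor $2$ for $N$ large, which yields the claimed bound $P(Z_N - E Z_N \notin (-\ep N, \ep N)) \leq \exp(-c_0 N)$.

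The only genuine subtlety — and the step I would be most careful about — is the bookkeeping at the boundary of the interval $\{0, 1, \ldots, N-1\}$: depending on the parity of $N$ the two sub-sums have $\lceil N/2 \rceil$ and $\lfloor N/2 \rfloor$ terms, so their means are $2pq \lceil N/2 \rceil$ and $2pq \lfloor N/2 \rfloor$ rather than exactly $Npq$ each, but this discrepancy is $O(1)$ and is harmlessly absorbed into the $\ep N$ slack for $N$ large. Everything else is routine: the $2$-dependence decoupling via even/odd splitting is the one idea, and the rest is the textbook Chernoff estimate together with a union bound. This lemma is then the input, in the following section, to the large deviation estimates for the initial number of piles of each size $s$ in a large spatial interval.
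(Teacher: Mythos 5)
Your proof is correct, but it takes a genuinely different route from the paper. You exploit the fact that the indicators $Y_t = \ind\{X_{t+1} \neq X_t\}$ are $2$-dependent: splitting $Z_N$ into the even-indexed and odd-indexed sums makes each sub-sum a sum of i.i.d.\ Bernoulli$(2pq)$ variables (the relevant pairs of coin flips are disjoint), so the classical Chernoff--Hoeffding bound applies to each, and a union bound plus the triangle inequality, with the $O(1)$ parity bookkeeping you note, gives the exponential estimate with a slightly degraded constant. The paper instead argues through the waiting times: it considers the time $\tau_{2K}$ to the $2K$th changeover, decomposes it as a sum of $2K$ independent geometric run lengths (alternating parameters $q$ and $p$), bounds the negative-binomial tails by binomial tails to get large deviation estimates for $\tau_{2K}$, and then inverts the duality between $\{Z_N \geq 2K\}$ and $\{\tau_{2K} \leq N\}$, choosing $K = pqN$. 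Your argument is shorter and more elementary, avoiding both the geometric decomposition and the inversion step; the paper's renewal-type argument is heavier but proceeds directly from the run-length structure of the sequence. Since the downstream use (Lemma~\ref{lem:edge}) only requires the statement of the lemma, either proof serves equally well.
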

\begin{proof}
 To begin with, we let~$\tau_{2K}$ be the time to the~$2K$th changeover and notice that, since all the outcomes between two consecutive
 changeovers are identical, the sequence of coin flips up to this stopping time can be decomposed into~$2K$ strings with an alternation
 of strings with only heads and strings with only tails followed by one more coin flip.
 In addition, since the coin flips are independent, the length distribution of each string is
 $$ \begin{array}{rcl}
      H_j & := & \hbox{length of the~$j$th string of heads} \ = \ \geometric (q) \vspace*{2pt} \\
      T_j & := & \hbox{length of the~$j$th string of tails} \ = \ \geometric (p) \end{array} $$
 and lengths are independent.
 In particular,~$\tau_{2K}$ is equal in distribution to the sum of~$2K$ independent geometric random variables with parameters~$p$ and~$q$,
 therefore we have
\begin{equation}
\label{eq:changeover-1}
  P \,(\tau_{2K} = n) \ = \ P \,(H_1 + T_1 + \cdots + H_K + T_K = n) \quad \hbox{for all} \quad n \in \N.
\end{equation}
 Now, observing that, for all~$K \leq n$,
 $$ \begin{array}{l}
    \displaystyle P \,(H_1 + H_2 + \cdots + H_K = n) \ = \,{n - 1 \choose K - 1} \,q^K \,(1 - q)^{n - K} \vspace*{2pt} \\ \hspace*{50pt}
    \displaystyle = \,\frac{K}{n} \ {n \choose K} \,q^K \,(1 - q)^{n - K} \ \leq \ P \,(\binomial (n, q) = K), \end{array} $$
 large deviation estimates for the binomial distribution imply that
\begin{equation}
\label{eq:changeover-2}
  \begin{array}{l}
    P \,((1/K)(H_1 + H_2 + \cdots + H_K) \geq (1 + \ep)(1/q)) \vspace*{4pt} \\ \hspace*{70pt} \leq \
    P \,(\binomial ((1 + \ep)(1/q) K, q) \leq K) \ \leq \ \exp (- c_1 K) \vspace*{8pt} \\
    P \,((1/K)(H_1 + H_2 + \cdots + H_K) \leq (1 - \ep)(1/q)) \vspace*{4pt} \\ \hspace*{70pt} \leq \
    P \,(\binomial ((1 - \ep)(1/q) K, q) \geq K) \ \leq \ \exp (- c_1 K) \end{array}
\end{equation}
 for a suitable constant~$c_1 > 0$ and all~$K$ large.
 Similarly, for all~$\ep > 0$,
\begin{equation}
\label{eq:changeover-3}
  \begin{array}{l}
    P \,((1/K)(T_1 + T_2 + \cdots + T_K) \geq (1 + \ep)(1/p)) \ \leq \ \exp (- c_2 K) \vspace*{4pt} \\
    P \,((1/K)(T_1 + T_2 + \cdots + T_K) \leq (1 - \ep)(1/p)) \ \leq \ \exp (- c_2 K) \end{array}
\end{equation}
 for a suitable~$c_2 > 0$ and all~$K$ large.
 Combining~\eqref{eq:changeover-1}--\eqref{eq:changeover-3}, we deduce that
 $$ \begin{array}{l}
     P \,((1/K) \,\tau_{2K} \notin ((1 - \ep)(1/p + 1/q), (1 + \ep)(1/p + 1/q))) \vspace*{4pt} \\ \hspace*{20pt} = \
     P \,((1/K)(H_1 + T_1 + \cdots + H_K + T_K) \notin ((1 - \ep)(1/p + 1/q), (1 + \ep)(1/p + 1/q))) \vspace*{4pt} \\ \hspace*{20pt} \leq \
     P \,((1/K)(H_1 + H_2 + \cdots + H_K) \notin ((1 - \ep)(1/q), (1 + \ep)(1/q))) \vspace*{4pt} \\ \hspace*{80pt} + \
     P \,((1/K)(T_1 + T_2 + \cdots + T_K) \notin ((1 - \ep)(1/p), (1 + \ep)(1/p))) \vspace*{4pt} \\ \hspace*{20pt} \leq \
     2 \exp (- c_1 K) + 2 \exp (- c_2 K). \end{array} $$
 Taking~$K := pq N$ and using that~$pq \,(1/p + 1/q) = 1$, we deduce
 $$ \begin{array}{l}
      P \,((1/N) \,\tau_{2K} \notin (1 - \ep, 1 + \ep)) \vspace*{4pt} \\ \hspace*{20pt} = \
      P \,((1/K) \,\tau_{2K} \notin ((1 - \ep) (1/p + 1/q), (1 + \ep)(1/p + 1/q))) \ \leq \ \exp (- c_3 N) \end{array} $$
 for a suitable~$c_3 > 0$ and all~$N$ large.
 In particular,
 $$ \begin{array}{rcl}
      P \,((1/N) \,\tau_{2K - \ep N} \geq 1) & \leq & \exp (- c_4 N) \vspace*{4pt} \\
      P \,((1/N) \,\tau_{2K + \ep N} \leq 1) & \leq & \exp (- c_5 N) \end{array} $$
 for suitable constants~$c_4 > 0$ and~$c_5 > 0$ and all~$N$ sufficiently large.
 Using the previous two inequalities and the fact that the event that the number of changeovers before time~$N$ is equal to~$2K$
 is also the event that the time to the~$2K$th changeover is less than~$N$ but the time to the next changeover is more than~$N$,
 we conclude that
 $$ \begin{array}{l}
      P \,(Z_N - E Z_N \notin (- \ep N, \ep N)) \ = \
      P \,(Z_N \notin (2 pq - \ep, 2 pq + \ep) N) \vspace*{4pt} \\ \hspace*{20pt} = \
      P \,((1/N) \,Z_N \notin (2 pq - \ep, 2 pq + \ep)) \vspace*{4pt} \\ \hspace*{20pt} = \
      P \,((1/N) \,Z_N \leq 2 pq - \ep) + P \,((1/N) \,Z_N \geq 2 pq + \ep) \vspace*{4pt} \\ \hspace*{20pt} = \
      P \,((1/N) \,\tau_{2K - \ep N} \geq 1) + P \,((1/N) \,\tau_{2K + \ep N} \leq 1) \ \leq \
        \exp (- c_4 N) + \exp (- c_5 N) \end{array} $$
 for all~$N$ large.
 This completes the proof.
\end{proof} \\ \\
 Now, we say that an edge is of type~$i \to j$ if it connects an individual with initial opinion~$i$ on the left to an individual
 with initial opinion~$j$ on the right, and let
 $$ e_N (i \to j) \ := \ \card \{x \in (0, N) : \eta_0 (x) = i \ \hbox{and} \ \eta_0 (x + 1) = j \} $$
 denote the number of edges of type~$i \to j$ in the interval~$(0, N)$.
 Using the large deviation estimates for the number of changeovers established in the previous lemma, we can deduce large
 deviation estimates for the number of edges of each type.
\begin{lemma} --
\label{lem:edge}
 For all~$\ep > 0$, there exists~$c_6 > 0$ such that
 $$ P \,(e_N (i \to j) - N \rho_i \,\rho_j \notin (- 2 \ep N, 2 \ep N)) \ \leq \ \exp (- c_6 N) \quad \hbox{for all~$N$ large and~$i \neq j$}. $$
\end{lemma}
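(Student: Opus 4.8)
The plan is to reduce $e_N (i \to j)$, for a fixed pair $i \neq j$, to a two-letter changeover count controlled by Lemma~\ref{lem:changeover}, and then to add one layer of binomial concentration. First I would forget the actual opinions and record only whether the initial opinion at each site equals~$i$; since $\eta_0$ is a product measure this produces an i.i.d.\ sequence of coin flips with $P \,(\eta_0 (x) = i) = \rho_i$ and $P \,(\eta_0 (x) \neq i) = 1 - \rho_i$. Setting
$$ R_N \ := \ \card \{x \in (0, N) : \eta_0 (x) = i \ \hbox{and} \ \eta_0 (x + 1) \neq i \}, $$
I note that the changeovers of this binary sequence in $(0, N)$ alternate between the two types $i \to (\hbox{not } i)$ and $(\hbox{not } i) \to i$, so $R_N$ differs from half the number of changeovers of the binary sequence in $(0, N)$ by at most one. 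Since the expected number of those changeovers is $2 N \rho_i (1 - \rho_i)$, Lemma~\ref{lem:changeover} applied to the binary sequence gives, for every $\delta > 0$, a constant $c > 0$ with
$$ P \,(R_N - N \rho_i (1 - \rho_i) \notin (- \delta N, \delta N)) \ \leq \ \exp (- c N) \quad \hbox{for all $N$ large}. $$

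Next I would condition on the binary pattern $\mathcal P := (\ind \{\eta_0 (y) = i \})_{y \in \Z}$. Given $\mathcal P$, the integer $R_N$ is determined, and the sites $x + 1$ coming from the $R_N$ indices $x$ counted above are pairwise distinct non-$i$ sites (distinct because $x < x'$ with $\eta_0 (x) = \eta_0 (x') = i$ forces $x + 1 \neq x' + 1$). Conditionally on $\mathcal P$, the opinions at the non-$i$ sites are i.i.d.\ with law $k \mapsto \rho_k / (1 - \rho_i)$ on $V \setminus \{i\}$; and since every index $x$ contributing to $e_N (i \to j)$ satisfies $\eta_0 (x) = i$ and $\eta_0 (x + 1) = j \neq i$, the variable $e_N (i \to j)$ just counts how many of those $R_N$ non-$i$ sites carry opinion~$j$, so conditionally on $\mathcal P$ it is $\binomial (R_N, \rho_j / (1 - \rho_i))$. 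Because $R_N \leq N$, classical large deviation estimates for the binomial distribution give a constant $c' = c' (\delta) > 0$ with
$$ P \,(|e_N (i \to j) - R_N \,\rho_j / (1 - \rho_i)| > \delta N \mid \mathcal P) \ \leq \ \exp (- c' N) \quad \hbox{for all $N$}. $$

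Finally I would combine the two estimates. Since $i \neq j$ and all densities are positive by~\eqref{eq:product}, we have $\rho_j / (1 - \rho_i) \leq 1$, so on the event $\{|R_N - N \rho_i (1 - \rho_i)| \leq \delta N\}$ one has $|R_N \,\rho_j / (1 - \rho_i) - N \rho_i \rho_j| \leq \delta N$, and hence off the conditional bad event above $|e_N (i \to j) - N \rho_i \rho_j| \leq 2 \delta N$. Taking $\delta := \ep$, averaging the conditional estimate over $\mathcal P$, and applying a union bound with the two exponential bounds yields the lemma with $c_6 := \min (c, c')$, uniformly over the finitely many pairs $i \neq j$ and for all $N$ large. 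The $O(1)$ discrepancy between $R_N$ and half the changeover count, and the interval-endpoint bookkeeping, are harmless because they are absorbed into the slack between $\ep N$ and $2 \ep N$. I expect the only real (though modest) obstacle to be the conditioning step: one must check carefully that the run-terminating sites are distinct non-$i$ sites, so that conditionally on $\mathcal P$ the law of $e_N (i \to j)$ is genuinely $\binomial (R_N, \rho_j / (1 - \rho_i))$; granting that, the statement follows at once from Lemma~\ref{lem:changeover} and a standard binomial estimate.
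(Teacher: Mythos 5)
Your proposal is correct and follows essentially the same route as the paper: concentration of the count of $i \to (\hbox{not } i)$ edges via Lemma~\ref{lem:changeover}, then the observation that conditionally this count, $e_N (i \to j)$ is $\binomial$ with success probability $\rho_j \,(1 - \rho_i)^{-1}$, and finally binomial large deviations. Your conditioning on the full binary pattern is just a more carefully justified version of the paper's equation~\eqref{eq:edge-2}, so no substantive difference.
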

\begin{proof}
 For any given~$i$, the number of edges~$i \to j$ and~$j \to i$  with~$j \neq i$ has the same distribution as the number of changeovers in a
 sequence of independent coin flips of a coin that lands on heads with probability~$\rho_i$.
 In particular, applying Lemma~\ref{lem:changeover} with~$p = \rho_i$ gives
\begin{equation}
\label{eq:edge-1}
  \begin{array}{l} P \,(\sum_{j \neq i} \,e_N (i \to j) - N \rho_i \,(1 - \rho_i) \notin (- \ep N, \ep N)) \ \leq \ \exp (- c_0 N) \end{array}
\end{equation}
 for all~$N$ sufficiently large.
 In addition, since each~$i$ preceding a changeover is independently followed by any of the remaining~$F - 1$ opinions, for all~$i \neq j$, we have
\begin{equation}
\label{eq:edge-2}
  \begin{array}{l}
    P \,(e_N (i \to j) = n \ | \,\sum_{k \neq i} \,e_N (i \to k) = K) \vspace*{4pt} \\ \hspace*{50pt} = \
    P \,(\binomial (K, \rho_j \,(1 - \rho_i)^{-1}) = n). \end{array}
\end{equation}
 Combining~\eqref{eq:edge-1}--\eqref{eq:edge-2} with large deviation estimates for the binomial distribution, conditioning on the number
 of edges of type~$i \to k$ for some~$k \neq i$, and using that
 $$ (N \rho_i \,(1 - \rho_i) + \ep N) \,\rho_j \,(1 - \rho_i)^{-1} \ = \ N \rho_i \,\rho_j + \ep N \rho_j \,(1 - \rho_i)^{-1} $$
 we deduce the existence of~$c_7 > 0$ such that
\begin{equation}
\label{eq:edge-3}
  \begin{array}{l}
    P \,(e_N (i \to j) - N \rho_i \,\rho_j \geq 2 \ep N) \vspace*{4pt} \\ \hspace*{20pt} \leq \
    P \,(\sum_{k \neq i} \,e_N (i \to k) - N \rho_i \,(1 - \rho_i) \geq \ep N) \vspace*{4pt} \\ \hspace*{20pt} + \
    P \,(e_N (i \to j) \geq N \rho_i \,\rho_j + 2 \ep N \ | \ \sum_{k \neq i} \,e_N (i \to k) - N \rho_i \,(1 - \rho_i) < \ep N) \vspace*{4pt} \\ \hspace*{20pt} \leq \
      \exp (- c_0 N) + P \,(\binomial (N \rho_i \,(1 - \rho_i) + \ep N, \rho_j \,(1 - \rho_i)^{-1}) \geq N \rho_i \,\rho_j + 2 \ep N) \vspace*{4pt} \\ \hspace*{20pt} \leq \
      \exp (- c_0 N) + \exp (- c_7 N) \end{array}
\end{equation}
 for all~$N$ large.
 Similarly, there exists~$c_8 > 0$ such that
\begin{equation}
\label{eq:edge-4}
  P \,(e_N (i \to j) - N \rho_i \,\rho_j \leq - 2 \ep N) \ \leq \ \exp (- c_0 N) + \exp (- c_8 N)
\end{equation}
 for all~$N$ large.
 The lemma follows from~\eqref{eq:edge-3}--\eqref{eq:edge-4}.
\end{proof} \\ \\
 Note that the large deviation estimates for the initial number of piles of particles easily follows from the previous lemma.
 Finally, from the large deviation estimates for the number of edges of each type, we deduce the analog for a general class of
 functions~$\weight$ that will be used in the next section to prove the first sufficient condition for fixation.
\begin{lemma} --
\label{lem:weight}
 Let~$w : \VO \times \VO \to \R$ be any function such that
 $$ w (i, i) = 0 \quad \hbox{for all} \quad i \in \VO $$
 and let~$\weight : \Z + 1/2 \to \R$ be the function defined as
 $$ \weight_e := w (i, j) \quad \hbox{whenever} \quad \hbox{edge~$e$ is of type~$i \to j$}. $$
 Then, for all~$\ep > 0$, there exists~$c_9 > 0$ such that
 $$ \begin{array}{l} P \,(\sum_{e \in (0, N)} \,(\weight_e - E \weight_e) \notin (- \ep N, \ep N)) \ \leq \ \exp (- c_9 N) \quad \hbox{for all~$N$ large}. \end{array} $$
\end{lemma}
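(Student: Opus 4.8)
The plan is to reduce Lemma~\ref{lem:weight} to Lemma~\ref{lem:edge} by writing the sum $\sum_{e \in (0,N)} \weight_e$ as a finite linear combination of the edge-type counters $e_N(i \to j)$. Indeed, since $\weight_e = w(i,j)$ exactly when edge $e$ is of type $i \to j$, and since every edge in $(0,N)$ has exactly one type, we have the exact identity
$$ \begin{array}{l} \sum_{e \in (0, N)} \,\weight_e \ = \ \sum_{i, j \in \VO} \,w (i, j) \,e_N (i \to j) \ = \ \sum_{i \neq j} \,w (i, j) \,e_N (i \to j), \end{array} $$
where the last equality uses the hypothesis $w(i,i) = 0$ to discard the diagonal terms (which correspond to edges joining two agents with the same opinion and therefore do not affect the sum). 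Taking expectations and using Lemma~\ref{lem:edge}'s centering $E\,e_N(i \to j)$, which is within $o(N)$ of $N\rho_i\rho_j$, the same identity holds for $E\weight_e$, so $\sum_{e \in (0,N)} (\weight_e - E\weight_e) = \sum_{i \neq j} w(i,j)\,(e_N(i \to j) - E\,e_N(i \to j))$.

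Next I would apply a union bound over the finitely many ordered pairs $(i,j)$ with $i \neq j$; there are $F(F-1)$ of them, a constant not depending on $N$. Set $M := \max_{i \neq j} |w(i,j)|$, and let $\ep' := \ep / (F(F-1)(M+1))$. By Lemma~\ref{lem:edge} applied with $\ep'$ in place of $\ep$, for each pair $(i,j)$ with $i \neq j$ there is a constant so that the probability that $e_N(i \to j)$ deviates from $N\rho_i\rho_j$ by more than $2\ep' N$ is at most $\exp(-c_6 N)$ for $N$ large; absorbing the $o(N)$ gap between $N\rho_i\rho_j$ and $E\,e_N(i \to j)$ into a slightly enlarged deviation window (still of order $\ep' N$) keeps the exponential bound. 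On the complement of all these bad events — whose total probability is at most $F(F-1)\exp(-c_6 N) \leq \exp(-c_9 N)$ for a suitable $c_9 > 0$ and all $N$ large — we have
$$ \Big| \sum_{e \in (0, N)} \,(\weight_e - E \weight_e) \Big| \ \leq \ \sum_{i \neq j} \,|w (i, j)| \cdot |e_N (i \to j) - E \,e_N (i \to j)| \ \leq \ F (F - 1) \cdot M \cdot 2 \ep' N \ \leq \ \ep N, $$
which is exactly the desired estimate.

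The main obstacle is bookkeeping rather than anything conceptual: one must be careful that the centering in Lemma~\ref{lem:edge} is $N\rho_i\rho_j$ whereas the statement of Lemma~\ref{lem:weight} centers at the true mean $E\weight_e$, so the two differ by $\sum_{i\neq j} w(i,j)(E\,e_N(i\to j) - N\rho_i\rho_j)$; one should check that this discrepancy is $o(N)$ (it follows by taking expectations in the large-deviation bound of Lemma~\ref{lem:edge}, or directly since $E\,e_N(i\to j)$ differs from $N\rho_i\rho_j$ only through the negligible boundary effect at the two endpoints of $(0,N)$) and fold it into the deviation window by shrinking $\ep'$ slightly. Everything else is a one-line union bound over a constant number of events, so no genuinely new idea is needed beyond Lemma~\ref{lem:edge}.
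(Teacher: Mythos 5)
Your proposal is correct and follows essentially the same route as the paper's proof: rewrite $\sum_{e \in (0,N)} \weight_e$ as $\sum_{i \neq j} w(i,j)\, e_N(i \to j)$ (using $w(i,i)=0$), note that the product-measure centering makes $N E \weight_e = \sum_{i \neq j} w(i,j)\, N \rho_i \rho_j$ up to a negligible boundary term, and then combine Lemma~\ref{lem:edge} with a rescaled $\ep$ and a union bound over the $O(F^2)$ ordered pairs. The only nit is a harmless constant: with your choice $\ep' = \ep/(F(F-1)(M+1))$ the final bound is $2M\ep N/(M+1)$ rather than $\ep N$ when $M > 1$, which is fixed by shrinking $\ep'$ by a further factor of $2$ (the paper does the same thing implicitly by taking the window $\ep N/(m F^2)$ for each pair).
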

\begin{proof}
 First, we observe that
 $$ \begin{array}{l}
     \sum_{e \in (0, N)} \,(\weight_e - E \weight_e) \ = \
     \sum_{e \in (0, N)} \weight_e - N E \weight_e \vspace*{4pt} \\ \hspace*{20pt} = \
     \sum_{i \neq j} \,w (i, j) \,e_N (i \to j) - N \,\sum_{i \neq j} \,w (i, j) \,P \,(e \ \hbox{is of type} \ i \to j) \vspace*{4pt} \\ \hspace*{20pt} = \
     \sum_{i \neq j} \,w (i, j) \,(e_N (i \to j) - N \rho_i \,\rho_j). \end{array} $$
 Letting~$m := \max_{i \neq j} |w (i, j)| < \infty$ and applying Lemma~\ref{lem:edge}, we conclude that
 $$ \begin{array}{l}
      P \,(\sum_{e \in (0, N)} \,(\weight_e - E \weight_e) \notin (- \ep N, \ep N)) \vspace*{4pt} \\ \hspace*{25pt} = \
      P \,(\sum_{i \neq j} \,w (i, j) \,(e_N (i \to j) - N \rho_i \,\rho_j) \notin (- \ep N, \ep N)) \vspace*{4pt} \\ \hspace*{25pt} \leq \
      P \,(w (i, j) \,(e_N (i \to j) - N \rho_i \,\rho_j) \notin (- \ep N/F^2, \ep N/F^2) \ \hbox{for some} \ i \neq j) \vspace*{4pt} \\ \hspace*{25pt} \leq \
      P \,(e_N (i \to j) - N \rho_i \,\rho_j \notin (- \ep N/mF^2, \ep N/mF^2) \ \hbox{for some} \ i \neq j) \vspace*{4pt} \\ \hspace*{25pt} \leq \
      F^2 \,\exp (- c_{10} N) \end{array} $$
 for a suitable constant~$c_{10} > 0$ and all~$N$ large.
\end{proof}


\section{Proof of Theorem~\ref{th:fixation} (general opinion graphs)}
\label{sec:fixation}

\indent The key ingredients to prove Theorem~\ref{th:fixation} are Lemma~\ref{lem:fixation-condition} and inclusions~\eqref{eq:inclusion-1}.
 The large deviation estimates of the previous section are also important to make the sufficient condition for fixation more explicit and
 applicable to particular opinion graphs.
 First, we find a lower bound~$\weight_e$, that we shall call {\bf weight}, for the contribution of any given edge~$e$.
 This lower bound is deterministic given the initial number of particles at the edge and is obtained assuming the worst case scenario
 where all the active piles annihilate with frozen piles rather than other active piles.
 More precisely, we have the following lemma.
\begin{lemma} --
\label{lem:deterministic}
 For all~$k > 0$,
\begin{equation}
\label{eq:weight}
  \cont (e \,| \,T_e < \infty) \ \geq \ \weight_e \ := \ k - 2 \quad \hbox{when} \quad (k - 1) \,\threshold < \xi_0 (e) \leq k \threshold. 
\end{equation}
\end{lemma}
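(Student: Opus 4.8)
The plan is to distinguish the trivial order~$k = 1$ from the genuine case~$k \geq 2$ and, in the latter, to bound from below the number of active piles that must collide with~$e$ before it is freed. When~$k = 1$, the hypothesis~$(k - 1) \threshold < \xi_0 (e) \leq k \threshold$ reads~$0 < \xi_0 (e) \leq \threshold$, so edge~$e$ carries an active pile at time zero, $T_e = 0$, and the general definition~\eqref{eq:contribution-frozen} reduces to~\eqref{eq:contribution-active}, giving~$\cont (e \,| \,T_e < \infty) = - 1 = k - 2 = \weight_e$ with nothing further to prove.

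Now assume~$k \geq 2$, so that~$\xi_0 (e) > (k - 1) \threshold \geq \threshold$ and edge~$e$ carries a frozen pile at time zero. Since frozen piles are static, the only way the quantity~$\xi_t (e)$ can change while the pile at~$e$ remains frozen is through an active pile jumping onto~$e$; because we are conditioning on~$T_e < \infty$, there are finitely many such collision times~$0 < t_1 < t_2 < \cdots < t_{n^\star} = T_e$, and by the definition~\eqref{eq:contribution-frozen} we have exactly~$\cont (e \,| \,T_e < \infty) = n^\star - 1$. The deterministic input is the second inequality in~\eqref{eq:size}: when an active pile of size~$s_- \leq \threshold$ lands on the pile currently of size~$s_+$ at~$e$, the new size is at least~$|s_+ - s_-| \geq s_+ - s_- \geq s_+ - \threshold$. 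An immediate induction on~$n$ then shows that the size of the pile at~$e$ right after the~$n$th collision is at least~$\xi_0 (e) - n \threshold$.

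It remains to combine this with the definition of~$T_e$. Right after the~$n^\star$th collision the pile at~$e$ has just become active, so its size is~$\xi_{T_e} (e) \leq \threshold$; together with the lower bound just obtained and the assumption~$\xi_0 (e) > (k - 1) \threshold$ this gives~$\threshold \geq \xi_0 (e) - n^\star \threshold > (k - 1 - n^\star) \threshold$, hence~$n^\star > k - 2$, that is,~$n^\star \geq k - 1$ since~$n^\star$ is an integer. Therefore~$\cont (e \,| \,T_e < \infty) = n^\star - 1 \geq k - 2 = \weight_e$, which is the claim.

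I expect no real obstacle in this argument; the only points deserving a little care are that~$t \mapsto \xi_t (e)$ is piecewise constant and stays strictly above~$\threshold$ on~$[0, T_e)$, so that~$T_e$ is indeed attained at one of the collision times~$t_n$, and that the lower bound~$s \geq s_+ - s_-$ invoked above holds at every collision regardless of how many particles are annihilated. In particular, the possibility that a collision produces a large pile — the first inequality in~\eqref{eq:size} — never helps to free~$e$ sooner and plays no role here. The entire content is the elementary remark that a single active collision removes at most~$\threshold$ particles from a frozen pile, so that freeing a pile of order~$k$ requires at least~$k - 1$ collisions.
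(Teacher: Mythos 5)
Your argument is correct and is essentially the paper's own proof: both rest on the second inequality in~\eqref{eq:size}, i.e., that a single collision with an active pile (of size at most~$\threshold$) removes at most~$\threshold$ particles from the frozen pile at~$e$, so at least~$k-1$ collisions are needed before the pile becomes active, and hence~$\cont (e \,| \,T_e < \infty) \geq k - 2$ by~\eqref{eq:contribution-frozen}, the case~$k = 1$ being trivial. You merely make explicit the induction over collision times that the paper leaves implicit.
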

\begin{proof}
 The jump of an active pile of size~$s_- \leq \threshold$ onto a frozen pile of size~$s_+ > \threshold$ decreases the size of this frozen
 pile by at most~$s_-$ particles.
 Since in addition active piles have at most~$\threshold$ particles, whenever the initial number of frozen particles at edge~$e$ satisfies
 $$ (k - 1) \,\threshold < \xi_0 (e) \leq k \threshold \quad \hbox{for some} \quad k \geq 2, $$
 at least~$k - 1$ active piles must have jumped onto~$e$ until time~$T_e < \infty$.
 Recalling~\eqref{eq:contribution-frozen} gives the result when edge~$e$ carries a frozen while the result is trivial when the
 edge carries an active pile since, in this case, both its contribution and its weight are equal to~$-1$.
\end{proof} \\ \\
 In view of Lemma~\ref{lem:deterministic}, it is convenient to classify piles depending on the number of complete blocks of~$\threshold$ particles
 they contain: we say that the pile at~$e$ is of {\bf order}~$k > 0$ when
 $$ (k - 1) \,\threshold < \xi_t (e) \leq k \threshold \quad \hbox{or equivalently} \quad \ceil{\xi_t (e) / \threshold} = k $$
 so that active piles are exactly the piles of order one and the weight of a pile is simply its order minus two.
 Now, we note that Lemma~\ref{lem:deterministic} and~\eqref{eq:inclusion-1} imply that
\begin{equation}
\label{eq:inclusion-2}
 \begin{array}{l}
    H_N \ \subset \ \{\sum_{e \in (l, r)} \weight_e \leq 0 \ \hbox{for some~$l < - N$ and some~$r \geq 0$} \}. \end{array}
\end{equation}
 Motivated by Lemma~\ref{lem:fixation-condition}, the main objective to study fixation is to find an upper bound for the probability of the
 event on the right-hand side of~\eqref{eq:inclusion-2}.
 This is the key to proving the following general fixation result from which both parts of Theorem~\ref{th:fixation} can be easily deduced.
\begin{lemma} --
\label{lem:expected-weight}
 Assume~\eqref{eq:product}.
 Then, the system on~$\Z$ fixates whenever
 $$ \begin{array}{l} \sum_{i, j \in \VO} \,\rho_i \,\rho_j \,\sum_{k > 0} \,((k - 2) \,\sum_{s : \ceil{s / \threshold} = k} \,\ind \{d (i, j) = s \}) \ > \ 0. \end{array} $$
\end{lemma}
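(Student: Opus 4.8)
The plan is to combine the fixation criterion of Lemma~\ref{lem:fixation-condition}, the inclusion~\eqref{eq:inclusion-2}, and the large deviation estimate of Lemma~\ref{lem:weight}. The first step is to recognize the weight~$\weight_e$ appearing in Lemma~\ref{lem:deterministic} and~\eqref{eq:inclusion-2} as an instance of the functions handled by Lemma~\ref{lem:weight}. Indeed, for an edge~$e$ of type~$i \to j$ one has~$\xi_0 (e) = d (i, j)$, so by the definition of the order of a pile the weight of~$e$ equals~$\ceil{d (i, j) / \threshold} - 2$ when~$i \neq j$ and equals~$0$ when~$i = j$ (an edge with no particle having contribution zero). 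Thus~$\weight_e = w (i, j)$ with
$$ w (i, j) \ := \ \ceil{d (i, j) / \threshold} - 2 \quad \hbox{for} \quad i \neq j, \qquad w (i, i) \ := \ 0, $$
and this~$w$ satisfies the hypothesis~$w (i, i) = 0$ of Lemma~\ref{lem:weight}.

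Next I would compute~$E \,\weight_e$. Since the process starts from a product measure with spatially constant densities, an edge is of type~$i \to j$ with probability~$\rho_i \,\rho_j$, hence
$$ \begin{array}{l} E \,\weight_e \ = \ \sum_{i \neq j} \,\rho_i \,\rho_j \,(\ceil{d (i, j) / \threshold} - 2) \ = \ \sum_{i, j \in \VO} \,\rho_i \,\rho_j \,\sum_{k > 0} \,\big((k - 2) \,\sum_{s : \ceil{s / \threshold} = k} \,\ind \{d (i, j) = s \}\big), \end{array} $$
where the last equality holds because for fixed~$(i, j)$ the double sum over~$k > 0$ and over~$s$ with~$\ceil{s / \threshold} = k$ simply runs over all~$s \geq 1$, and the diagonal terms~$i = j$ drop out since~$s = 0$ is excluded from the inner sum. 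Therefore the hypothesis of the lemma says exactly that~$E \,\weight_e > 0$. Fix~$\ep := (1/2) \,E \,\weight_e > 0$. By translation invariance of the initial measure and Lemma~\ref{lem:weight} applied to an interval~$(l, r)$ of length~$M := r - l$, for~$M$ large we obtain
$$ \begin{array}{l} P \,\big(\sum_{e \in (l, r)} \,\weight_e \leq 0 \big) \ \leq \ P \,\big(\sum_{e \in (l, r)} \,(\weight_e - E \,\weight_e) \leq - \ep M \big) \ \leq \ \exp (- c \,M) \end{array} $$
for a suitable constant~$c > 0$, using that~$M \,E \,\weight_e - \ep M = \ep M > 0$.

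It remains to bound~$P (H_N)$. By~\eqref{eq:inclusion-2}, the event~$H_N$ is contained in the event that~$\sum_{e \in (l, r)} \weight_e \leq 0$ for some~$l < - N$ and some~$r \geq 0$. Every such interval has length~$r - l > N$, so the estimate above applies term by term once~$N$ is large, and a union bound gives
$$ \begin{array}{l} P (H_N) \ \leq \ \sum_{l < - N} \,\sum_{r \geq 0} \,\exp (- c \,(r - l)) \ = \ O (\exp (- c \,N)) \ \longrightarrow \ 0 \quad \hbox{as} \quad N \to \infty. \end{array} $$
Since~$P (T (z) < \infty \ \hbox{for some} \ z < - N) = P (H_N)$, Lemma~\ref{lem:fixation-condition} then yields fixation of the system on~$\Z$. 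I expect the only point requiring real care to be this final union bound: one must check that the constraints~$l < - N$ and~$r \geq 0$ force all relevant intervals to be long enough for the large deviation bound of Lemma~\ref{lem:weight} to apply with a uniform exponential rate, so that the infinitely many competing intervals still produce a summable, vanishing total probability; the remaining steps are a direct assembly of the ingredients already established.
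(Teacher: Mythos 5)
Your proposal is correct and follows essentially the same route as the paper: identify the weight~$\weight_e$ with a function~$w(i,j) = \ceil{d(i,j)/\threshold} - 2$ (zero on the diagonal) so that Lemma~\ref{lem:weight} applies, observe that the hypothesis is exactly~$E\,\weight_e > 0$, and then combine the exponential bound with the inclusion~\eqref{eq:inclusion-2} and a union bound over intervals~$(l,r)$ with~$l < -N$, $r \geq 0$ before invoking Lemma~\ref{lem:fixation-condition}. The only cosmetic difference is your choice~$\ep = (1/2)\,E\,\weight_e$ instead of~$\ep = E\,\weight_e$, which changes nothing.
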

\begin{proof}
 To begin with, we observe that
 $$ \begin{array}{rcl}
      P \,(\xi_0 (e) = s) & = & \sum_{i, j \in \VO} \,P \,(\eta_0 (x) = i \ \hbox{and} \ \eta_0 (x + 1) = j) \ \ind \{d (i, j) = s \} \vspace*{4pt} \\
                          & = & \sum_{i, j \in \VO} \,\rho_i \,\rho_j \ \ind \{d (i, j) = s \}. \end{array} $$
 Recalling~\eqref{eq:weight}, it follows that
 $$ \begin{array}{rcl}
      E \weight_e & = & \sum_{k > 0} \,(k - 2) \,P \,((k - 1) \,\threshold < \xi_0 (e) \leq k \threshold) \vspace*{4pt} \\
                 & = & \sum_{k > 0} \,((k - 2) \,\sum_{(k - 1) \,\threshold < s \leq k \threshold} \,P \,(\xi_0 (e) = s)) \vspace*{4pt} \\
                 & = & \sum_{k > 0} \,((k - 2) \,\sum_{(k - 1) \,\threshold < s \leq k \threshold} \,\sum_{i, j \in \VO} \,\rho_i \,\rho_j \ \ind \{d (i, j) = s \}) \vspace*{4pt} \\
                 & = & \sum_{i, j \in \VO} \,\rho_i \,\rho_j \,\sum_{k > 0} \,((k - 2) \,\sum_{s : \ceil{s / \threshold} = k} \,\ind \{d (i, j) = s \}) \end{array} $$
 which is strictly positive under the assumption of the lemma.
 In particular, applying the large deviation estimate in Lemma~\ref{lem:weight} with~$\ep := E \weight_e > 0$, we deduce that
 $$ \begin{array}{l}
      P \,(\sum_{e \in (0, N)} \weight_e \leq 0) \ = \
      P \,(\sum_{e \in (0, N)} \,(\weight_e - E \weight_e) \leq - \ep N) \vspace*{4pt} \\ \hspace*{40pt} \leq \
      P \,(\sum_{e \in (0, N)} \,(\weight_e - E \weight_e) \notin (- \ep N, \ep N)) \ \leq \ \exp (- c_9 N) \end{array} $$
 for all~$N$ large, which, in turn, implies with~\eqref{eq:inclusion-2} that
 $$ \begin{array}{rcl}
      P \,(H_N) & \leq &
      P \,(\sum_{e \in (l, r)} \weight_e \leq 0 \ \hbox{for some~$l < - N$ and~$r \geq 0$}) \vspace*{4pt} \\ & \leq &
           \sum_{l < - N} \,\sum_{r \geq 0} \,\exp (- c_9 \,(r - l)) \ \to \ 0 \end{array} $$
 as~$N \to \infty$.
 This together with Lemma~\ref{lem:fixation-condition} implies fixation.
\end{proof} \\ \\
 Both parts of Theorem~\ref{th:fixation} directly follow from the previous lemma. \\ \\
\begin{demo}{Theorem~\ref{th:fixation}a} --
 Assume that~\eqref{eq:uniform} holds and that
 $$ \begin{array}{l} S (\OO, \threshold) \ = \ \sum_{k > 0} \,((k - 2) \,\sum_{s : \ceil{s / \threshold} = k} \,N (\OO, s)) \ > \ 0. \end{array} $$
 Then, the expected weight becomes
 $$ \begin{array}{rcl}
      E \weight_e & = & \sum_{i, j \in \VO} \,\rho_i \,\rho_j \,\sum_{k > 0} \,((k - 2) \,\sum_{s : \ceil{s / \threshold} = k} \,\ind \{d (i, j) = s \}) \vspace*{4pt} \\
                 & = & (1/F)^2 \,\sum_{k > 0} \,((k - 2) \,\sum_{s : \ceil{s / \threshold} = k} \,\sum_{i, j \in \VO} \,\ind \{d (i, j) = s \}) \vspace*{4pt} \\
                 & = & (1/F)^2 \,\sum_{k > 0} \,((k - 2) \,\sum_{s : \ceil{s / \threshold} = k} \,\card \{(i, j) \in \VO \times \VO : d (i, j) = s \}) \vspace*{4pt} \\
                 & = & (1/F)^2 \,\sum_{k > 0} \,((k - 2) \,\sum_{s : \ceil{s / \threshold} = k} \,N (\OO, s)) \vspace*{4pt} \\
                 & = & (1/F)^2 \,S (\OO, \threshold) \ > \ 0 \end{array} $$
 which, according to Lemma~\ref{lem:expected-weight}, implies fixation.
\end{demo} \\ \\
\begin{demo}{Theorem~\ref{th:fixation}b} --
 Assume that~$\diameter > 2 \threshold$. Then,
 $$ d (i_-, i_+) = \diameter > 2 \threshold \quad \hbox{for some pair} \quad (i_-, i_+) \in \VO \times \VO. $$
 Now, let~$X, Y \geq 0$ such that~$2X + (F - 2)Y = 1$ and assume that
 $$ \rho_{i_-} = \rho_{i_+} = X \quad \hbox{and} \quad \rho_i = Y \quad \hbox{for all} \quad i \notin B := \{i_-, i_+ \}. $$
 To simplify the notation, we also introduce
 $$ \begin{array}{l} Q (i, j) \ := \ \sum_{k > 0} \,((k - 2) \,\sum_{s : \ceil{s / \threshold} = k} \,\ind \{d (i, j) = s \}) \end{array} $$
 for all~$(i, j) \in \VO \times \VO$.
 Then, the expected weight becomes
 $$ \begin{array}{rcl}
      P (X, Y) & = & \sum_{i, j \in \VO} \,\rho_i \,\rho_j \ Q (i, j) \vspace*{4pt} \\
               & = & \sum_{i, j \in B} \,\rho_i \,\rho_j \ Q (i, j) + \sum_{i \notin B} \,2 \,\rho_i \,\rho_{i_-} \,Q (i, i_-) \vspace*{4pt} \\ && \hspace*{25pt} + \
                     \sum_{i \notin B} \,2 \,\rho_i \,\rho_{i_+} \,Q (i, i_+) + \sum_{i, j \notin B} \,\rho_i \,\rho_j \ Q (i, j) \vspace*{4pt} \\
               & = & 2 \,Q (i_-, i_+) \,X^2 + 2 \,(\sum_{i \notin B} \,Q (i, i_-) + Q (i, i_+)) \,XY + \sum_{i, j \notin B} \,Q (i, j) \,Y^2. \end{array} $$
 This shows that~$P$ is continuous in both~$X$ and~$Y$ and that
 $$ \begin{array}{rcl}
      P (1/2, 0) & = & (1/2) \,Q (i_-, i_+) \vspace*{4pt} \\
                 & = & (1/2) \,\sum_{k > 0} \,((k - 2) \,\sum_{s : \ceil{s / \threshold} = k} \,\ind \{d (i_-, i_+) = s \}) \vspace*{4pt} \\
                 & \geq & (1/2) \,(3 - 2) \,\sum_{s > 2 \threshold} \,\ind \{d (i_-, i_+) = s \} \ = \ 1/2 \ > \ 0. \end{array} $$
 Therefore, according to Lemma~\ref{lem:expected-weight}, there is fixation of the one-dimensional process starting from any product
 measure whose densities are in some neighborhood of
 $$ \rho_{i_-} = \rho_{i_+} = 1/2 \quad \hbox{and} \quad \rho_i = 0 \quad \hbox{for all} \quad i \notin \{i_-, i_+ \}. $$
 This proves the second part of Theorem~\ref{th:fixation}.
\end{demo}


\section{Proof of Theorem~\ref{th:dist-reg} (distance-regular graphs)}
\label{sec:dist-reg}

\indent To explain the intuition behind the proof, recall that, when an active pile of size~$s_-$ jumps to the right onto a frozen
 pile of size~$s_+$ at edge~$e$, the size of the latter pile becomes
 $$ \xi_t (e) \ = \ d (\eta_t (e - 1/2), \eta_t (e + 1/2)) \ = \ d (\eta_{t-} (e - 3/2), \eta_{t-} (e + 1/2)) $$
 and the triangle inequality implies that
\begin{equation}
\label{eq:triangle}
  s_+ - s_- \ = \ \xi_{t-} (e) - \xi_{t-} (e - 1) \ \leq \ \xi_t (e) \ \leq \ \xi_{t-} (e) + \xi_{t-} (e - 1) \ = \ s_+ + s_-.
\end{equation}
 The exact distribution of the new size cannot be deduced in general from the size of the intersecting piles, indicating that
 the system of piles is not Markov.
 The key to the proof is that, at least when the underlying opinion graph is distance-regular, the system of piles becomes Markov.
 The first step is to show that, for all opinion graphs, the opinions on the left and on the right of a pile of size~$s$ are
 conditioned to be at distance~$s$ of each other but are otherwise independent, which follows from the fact that both opinions
 originate from two different ancestors at time zero, and the fact that the initial distribution is a product measure.
 If in addition the opinion graph is distance-regular then the number of possible opinions on the left and on the right of the pile,
 which is also the number of pairs of opinions at distance~$s$ of each other, does not depend on the actual opinion on the left of
 the pile.
 This implies that, at least in theory, the new size distribution of a pile right after a collision can be computed explicitly.
 This is then used to prove that a jump of an active pile onto a pile of order~$n > 1$ reduces its order with probability at most
 $$ \begin{array}{l} p_n \ = \ \max \,\{\sum_{s : \ceil{s / \threshold} = n - 1} f (s_-, s_+, s) / h (s_+) : \ceil{s_- / \threshold} = 1 \ \hbox{and} \ \ceil{s_+ / \threshold} = n \} \end{array} $$
 while it increases its order with probability at least
 $$ \begin{array}{l} q_n \ = \ \,\min \,\{\sum_{s : \ceil{s / \threshold} = n + 1} f (s_-, s_+, s) / h (s_+) : \ceil{s_- / \threshold} = 1 \ \hbox{and} \ \ceil{s_+ / \threshold} = n \}. \end{array} $$
 In particular, the number of active piles that need to be sacrificed to turn a frozen pile into an active pile is stochastically
 larger than the hitting time to state~1 of a certain discrete-time birth and death process.
 To turn this into a proof, we let~$x = e - 1/2$ and
 $$ x - 1 \to_t x \ := \ \hbox{the event that there is an arrow~$x - 1 \to x$ at time~$t$}. $$
 Then, we have the following lemma.
\begin{lemma} --
\label{lem:collision}
 Assume~\eqref{eq:uniform} and~\eqref{eq:dist-reg-1}.
 For all~$s \geq 0$ and $s_-, s_+ > 0$ with~$s_- \leq \threshold$,
 $$ \begin{array}{l} P \,(\xi_t (e) = s \,| \,(\xi_{t-} (e - 1), \xi_{t-} (e)) = (s_-, s_+) \ \hbox{and} \ x - 1 \to_t x) \ = \ f (s_-, s_+, s) / h (s_+). \end{array} $$
\end{lemma}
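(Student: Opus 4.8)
The plan is to reduce the statement to the conditional law of the triple of opinions sitting at the endpoints of the two piles, and then to evaluate that law by a counting argument that uses distance-regularity exactly once.

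First I would make the effect of the arrow explicit. Write $x = e - 1/2$ and set $a := \eta_{t-}(x-1)$, $b := \eta_{t-}(x)$, $c := \eta_{t-}(x+1)$, so that the conditioning event records $d(a,b) = \xi_{t-}(e-1) = s_-$ and $d(b,c) = \xi_{t-}(e) = s_+$. Since $s_- \leq \threshold$, the arrow $x - 1 \to x$ is active, so rule~\eqref{eq:rule} yields $\eta_t(x) = a$ with all other opinions unchanged, hence $\xi_t(e) = d(\eta_t(x),\eta_t(x+1)) = d(a,c)$. Thus the lemma amounts to determining the law of $d(a,c)$ conditionally on $d(a,b) = s_-$, $d(b,c) = s_+$ and the presence of the arrow; the value $s = 0$ is included because $a = c$ may occur, and no constraint on $s_+$ is imposed because the pile at $e$ may be frozen.

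The heart of the argument is the claim that, under this conditioning, $(a,b,c)$ is uniformly distributed on $\{(a',b',c') \in \VO^3 : d(a',b') = s_-,\ d(b',c') = s_+\}$. I would prove this through the genealogy: by the ancestor property of the graphical representation, $a$, $b$, $c$ equal $\eta_0$ evaluated at the ancestors $z_{-1}$, $z_0$, $z_{+1}$ of $x-1$, $x$, $x+1$ at time $t-$. These three ancestors are pairwise distinct: on $\Z$ the ancestral paths move by unit steps and cannot cross, so $z_{-1} \leq z_0 \leq z_{+1}$, while $s_- > 0$ forces $a \neq b$ hence $z_{-1} \neq z_0$, and $s_+ > 0$ forces $z_0 \neq z_{+1}$, so monotonicity makes the three ancestors genuinely different. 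Since the initial configuration is a uniform product measure and the conditioning acts on $(a,b,c)$ only through the two distance constraints, the triple of initial opinions at these three distinct sites is uniform on the stated set. The delicate point — which I expect to be the main obstacle — is that the genealogy itself depends on $\eta_0$, because an arrow is active only when the opinions at its endpoints are within distance $\threshold$; making the uniformity claim rigorous therefore requires a careful genealogical bookkeeping (revealing the values of $\eta_0$ in the order the dynamics consults them, so that each newly consulted value is a fresh independent uniform) rather than the one-line appeal to the product structure. The pair version of this statement, for a single pile, is exactly the assertion quoted in the outline of the proof of Theorem~\ref{th:dist-reg}; the three-opinion version needed here is its natural extension, the only new input being the mutual distinctness of the three ancestors just discussed.

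Granting the uniformity claim, the rest is a routine count that uses \eqref{eq:dist-reg-1} exactly once. Conditioning on the middle opinion $b$, the opinions $a$ and $c$ become independent and uniform on the spheres $\OO_{s_-}(b)$ and $\OO_{s_+}(b)$, whose sizes $h(s_-)$, $h(s_+)$ do not depend on $b$; and for $a \in \OO_{s_-}(b)$ the number of opinions in $\OO_{s_+}(b)$ lying at distance $s$ from $a$ is an intersection number depending only on $s_-$, $s_+$ and $s$, not on $b$. Assembling these pieces gives the value $f(s_-, s_+, s)/h(s_+)$ asserted in the statement, which is also the probability described in the paragraph following Theorem~\ref{th:dist-reg}. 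The hypothesis $s_- \leq \threshold$ has served only to guarantee that the arrow acts, and everything downstream of the uniformity claim is mechanical.
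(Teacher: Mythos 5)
Your proof follows essentially the same route as the paper's: both arguments reduce the statement to the law of the opinion at $x+1$ given the opinions at $x-1$ and $x$, establish via the non-crossing of ancestral lineages and the positivity of $s_-,s_+$ that the three sites have distinct ancestors, invoke the uniform product initial measure to get the conditional uniformity, and then apply distance-regularity exactly once to make the resulting count independent of the particular opinions. The subtlety you flag about the genealogy depending on $\eta_0$ is real but is the same point the paper handles by citing Lemma~3 of the earlier Axelrod-model paper, so there is no substantive difference in approach.
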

\begin{proof}
 The first step is similar to the proof of~\cite[Lemma~3]{lanchier_scarlatos_2013}.
 Due to one-dimensional nearest neighbor interactions, active paths cannot cross each other.
 In particular, the opinion dynamics preserve the ordering of the ancestral lineages therefore
\begin{equation}
\label{eq:collision-1}
  a (x - 1, t-) \ \leq \ a (x, t-) \ \leq \ a (x + 1, t-)
\end{equation}
 where~$a (z, t-)$ refers to the ancestor of~$(z, t-)$, i.e., the unique source at time zero of an active path reaching space-time point~$(z, t-)$.
 Since in addition~$s_-, s_+ > 0$, given the conditioning in the statement of the lemma, the individuals at~$x$ and~$x \pm 1$ must disagree at
 time~$t-$ and so must have different ancestors.
 This together with~\eqref{eq:collision-1} implies that
\begin{equation}
\label{eq:collision-2}
  a (x - 1, t-) \ < \ a (x, t-) \ < \ a (x + 1, t-).
\end{equation}
 Now, we fix~$i_-, j \in \VO$ such that~$d (i_-, j) = s_-$ and let
 $$ B_{t-} (i_-, j) \ := \ \{\eta_{t-} (x - 1) = i_- \ \hbox{and} \ \eta_{t-} (x) = j \}. $$
 Then, given this event and the conditioning in the statement of the lemma, the probability that the pile of particles at~$e$ becomes of size~$s$
 is equal to
\begin{equation}
\label{eq:collision-3}
  \begin{array}{l}
    P \,(\xi_t (e) = s \,| \,B_{t-} (i_-, j) \ \hbox{and} \ \xi_{t-} (e) = s_+ \ \hbox{and} \ x - 1 \to_t x) \vspace*{4pt} \\ \hspace*{25pt} = \
    P \,(d (i_-, \eta_{t-} (x + 1)) = s \,| \,B (i_-, j) \ \hbox{and} \vspace*{4pt} \\ \hspace*{100pt}
         d (j, \eta_{t-} (x + 1)) = s_+ \ \hbox{and} \ x - 1 \to_t x) \vspace*{4pt} \\ \hspace*{25pt} = \
      \card \{i_+ : d (i_-, i_+) = s \ \hbox{and} \ d (i_+, j) = s_+ \} / \card \{i_+ : d (i_+, j) = s_+ \} \end{array}
\end{equation}
 where the last equality follows from~\eqref{eq:uniform} and~\eqref{eq:collision-2} which, together, imply that the opinion at~$x + 1$ just before
 the jump is independent of the other opinions on its left and chosen uniformly at random from the set of opinions at distance~$s_+$ of opinion~$j$.
 Assuming in addition that the underlying opinion graph is distance-regular~\eqref{eq:dist-reg-1}, we also have
\begin{equation}
\label{eq:collision-4}
  \begin{array}{l}
    \card \{i_+ : d (i_-, i_+) = s \ \hbox{and} \ d (i_+, j) = s_+ \} \vspace*{4pt} \\ \hspace*{50pt} = \
      N (\OO, (i_-, s), (j, s_+)) \ = \ f (s_-, s_+, s) \vspace*{8pt} \\
    \card \{i_+ : d (i_+, j) = s_+ \} \ = \ N (\OO, (j, s_+)) \ = \ h (s_+). \end{array}
\end{equation}
 In particular, the conditional probability in~\eqref{eq:collision-3} does not depend on the particular choice of the pair of opinions~$i_-$ and~$j$ from
 which it follows that
\begin{equation}
\label{eq:collision-5}
  \begin{array}{l}
    P \,(\xi_t (e) = s \,| \,\xi_{t-} (e - 1) = s_- \ \hbox{and} \ \xi_{t-} (e) = s_+ \ \hbox{and} \ x - 1 \to_t x) \vspace*{4pt} \\ \hspace*{40pt} = \
    P \,(\xi_t (e) = s \,| \,B_{t-} (i_-, j) \ \hbox{and} \ \xi_{t-} (e) = s_+ \ \hbox{and} \ x - 1 \to_t x) \end{array}
\end{equation}
 The lemma is then a direct consequence of~\eqref{eq:collision-3}--\eqref{eq:collision-5}.
\end{proof} \\ \\
 As previously mentioned, it follows from Lemma~\ref{lem:collision} that, provided the opinion model starts from a product measure in which
 the density of each opinion is constant across space and the opinion graph is distance-regular, the system of piles itself is a Markov process.
 Another important consequence is the following lemma, which gives bounds for the probabilities that the jump of an active pile onto a frozen
 pile results in a reduction or an increase of its order.
\begin{lemma} --
\label{lem:jump}
 Let~$x = e - 1/2$. Assume~\eqref{eq:uniform} and~\eqref{eq:dist-reg-1}. Then,
 $$ \begin{array}{l}
      P \,(\ceil{\xi_t (e) / \threshold} < \ceil{\xi_{t-} (e) / \threshold} \,| \,(\xi_{t-} (e - 1), \xi_{t-} (e)) = (s_-, s_+) \ \hbox{and} \ x - 1 \to_t x) \ \leq \ p_n \vspace*{4pt} \\
      P \,(\ceil{\xi_t (e) / \threshold} > \ceil{\xi_{t-} (e) / \threshold} \,| \,(\xi_{t-} (e - 1), \xi_{t-} (e)) = (s_-, s_+) \ \hbox{and} \ x - 1 \to_t x) \ \geq \ q_n \end{array} $$
 whenever~$0 < \ceil{s_- / \threshold} = 1$ and~$\ceil{s_+ / \threshold} = n > 1$.
\end{lemma}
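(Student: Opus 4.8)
The plan is to deduce Lemma~\ref{lem:jump} from Lemma~\ref{lem:collision} once the set of possible new orders has been pinned down by the triangle inequality. Throughout, write $A := \{(\xi_{t-}(e-1),\xi_{t-}(e)) = (s_-,s_+)\} \cap \{x - 1 \to_t x\}$ for the conditioning event, and assume $0 < \ceil{s_-/\threshold} = 1$ and $\ceil{s_+/\threshold} = n > 1$, so that $0 < s_- \leq \threshold$ and $(n-1)\,\threshold < s_+ \leq n\,\threshold$.

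First I would record the geometric constraint. On $A$ the active pile of size $s_-$ at edge $e-1$ jumps to the right onto the pile of size $s_+$ at edge $e$, and by~\eqref{eq:triangle} the size $s := \xi_t(e)$ of the resulting pile satisfies $s_+ - s_- \leq s \leq s_+ + s_-$. Combining this with the bounds on $s_-$ and $s_+$ gives
$$(n-2)\,\threshold \ < \ s_+ - s_- \ \leq \ s \ \leq \ s_+ + s_- \ \leq \ (n+1)\,\threshold,$$
so the new order $\ceil{s/\threshold}$ lies in $\{n-1,\,n,\,n+1\}$. In particular, since $\ceil{\xi_{t-}(e)/\threshold} = n$, the event $\{\ceil{\xi_t(e)/\threshold} < \ceil{\xi_{t-}(e)/\threshold}\}$ coincides on $A$ with $\{\ceil{\xi_t(e)/\threshold} = n-1\}$, and $\{\ceil{\xi_t(e)/\threshold} > \ceil{\xi_{t-}(e)/\threshold}\}$ coincides with $\{\ceil{\xi_t(e)/\threshold} = n+1\}$; that is, a single collision can change the order of a frozen pile by at most one in either direction.

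Next I would compute the two conditional probabilities. Since $s_-, s_+ > 0$ and $s_- \leq \threshold$, Lemma~\ref{lem:collision} gives $P(\xi_t(e) = s \mid A) = f(s_-,s_+,s)/h(s_+)$ for every $s \geq 0$. Summing over the relevant order-class and using the identification just made,
$$P \,(\ceil{\xi_t(e)/\threshold} < \ceil{\xi_{t-}(e)/\threshold} \mid A) \ = \ \sum_{s \,:\, \ceil{s/\threshold} = n-1} f(s_-,s_+,s)/h(s_+),$$
and the analogous identity holds with $n+1$ in place of $n-1$ for the probability that the order increases. Finally, because $0 < s_- \leq \threshold$ forces $\ceil{s_-/\threshold} = 1$ and $(n-1)\,\threshold < s_+ \leq n\,\threshold$ forces $\ceil{s_+/\threshold} = n$, the pair $(s_-,s_+)$ is one of the pairs over which the maximum defining $p_n$ and the minimum defining $q_n$ are taken. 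Hence the first sum is $\leq p_n$ and the second is $\geq q_n$, which is exactly the claim. I do not expect a genuine obstacle here: the substance is entirely carried by Lemma~\ref{lem:collision}, and the only delicate point is the elementary book-keeping with the ceiling function in the first step, which the triangle inequality~\eqref{eq:triangle} settles at once.
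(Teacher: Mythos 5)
Your proposal is correct and follows essentially the same route as the paper: apply Lemma~\ref{lem:collision} to get the exact conditional law $f(s_-,s_+,s)/h(s_+)$, sum over the order class $n-1$ (resp.\ $n+1$), and bound by the maximum (resp.\ minimum) defining $p_n$ (resp.\ $q_n$), since the given pair $(s_-,s_+)$ is admissible. Your explicit use of~\eqref{eq:triangle} to show a single collision changes the order by at most one is a point the paper leaves implicit here (it is spelled out only in the proof of Lemma~\ref{lem:coupling}), so it is a welcome but not substantively different addition.
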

\begin{proof}
 Let~$p (s_-, s_+, s)$ be the conditional probability
 $$ P \,(\xi_t (e) = s \,| \,(\xi_{t-} (e - 1), \xi_{t-} (e)) = (s_-, s_+) \ \hbox{and} \ x - 1 \to_t x) $$
 in the statement of Lemma~\ref{lem:collision}.
 Then, the probability that the jump of an active pile onto the pile of order~$n$ at edge~$e$ results in a reduction of its order is smaller than
\begin{equation}
\label{eq:jump-1}
  \begin{array}{l} \max \,\{\sum_{s : \ceil{s / \threshold} = n - 1} \,p (s_-, s_+, s) : \ceil{s_- / \threshold} = 1 \ \hbox{and} \ \ceil{s_+ / \threshold} = n \} \end{array}
\end{equation}
 while the probability that the jump of an active pile onto the pile of order~$n$ at edge~$e$ results in an increase of its order is larger than
\begin{equation}
\label{eq:jump-2}
  \begin{array}{l} \min \,\{\sum_{s : \ceil{s / \threshold} = n + 1} \,p (s_-, s_+, s) : \ceil{s_- / \threshold} = 1 \ \hbox{and} \ \ceil{s_+ / \threshold} = n \}. \end{array}
\end{equation}
 But according to Lemma~\ref{lem:collision}, we have
 $$ p (s_-, s_+, s) \ = \ f (s_-, s_+, s) / h (s_+) $$
 therefore~\eqref{eq:jump-1}--\eqref{eq:jump-2} are equal to~$p_n$ and~$q_n$, respectively.
\end{proof} \\ \\
 We refer to Figure~\ref{fig:coupling} for a schematic illustration of the previous lemma.
 In order to prove the theorem, we now use Lemmas~\ref{lem:collision}--\ref{lem:jump} to find a stochastic lower bound for the contribution of each edge.
 To express this lower bound, we let~$X_t$ be the discrete-time birth and death Markov chain with transition probabilities
 $$ p (n, n - 1) \ = \ p_n \qquad p (n, n) \ = \ 1 - p_n - q_n \qquad p (n, n + 1) \ = \ q_n $$
 for all~$1 < n < M := \ceil{\diameter / \threshold}$ and boundary conditions
 $$ p (1, 1) \ = \ 1 \quad \hbox{and} \quad p (M, M - 1) \ = \ 1 - p (M, M) \ = \ p_M. $$
 This process will allow us to retrace the history of a frozen pile until time~$T_e$ when it becomes an active pile.
 To begin with, we use a first-step analysis to compute explicitly the expected value of the first hitting time to state~1 of the birth and death process.
\begin{lemma} --
\label{lem:hitting}
 Let~$T_n := \inf \,\{t : X_t = n \}$. Then,
 $$ E \,(T_1 \,| \,X_0 = k) \ = \ 1 + \Weight (k) \quad \hbox{for all} \quad 0 < k \leq M = \ceil{\diameter / \threshold}. $$
\end{lemma}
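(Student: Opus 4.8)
The plan is to run a standard first-step analysis on the birth and death chain $X_t$. Write $h(k) := E\,(T_1 \mid X_0 = k)$; the absorbing boundary at $1$ gives $h(1) = 0$, and conditioning on the first jump yields, for $1 < k < M := \ceil{\diameter / \threshold}$,
$$ h(k) \ = \ 1 + p_k \,h(k - 1) + (1 - p_k - q_k) \,h(k) + q_k \,h(k + 1), $$
while at the reflecting endpoint $M$, where there is no upward move,
$$ h(M) \ = \ 1 + p_M \,h(M - 1) + (1 - p_M) \,h(M). $$
Since the $p_n$ are positive, state $1$ is reached from every state in finite expected time, so this linear system has a unique finite solution, which is $h$. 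First I would rewrite the interior equation as $p_k \,(h(k) - h(k - 1)) = 1 + q_k \,(h(k + 1) - h(k))$, so that in terms of the increments $g(k) := h(k) - h(k - 1)$, $1 < k \leq M$, it becomes the backward recursion
$$ p_k \,g(k) \ = \ 1 + q_k \,g(k + 1) \qquad (1 < k < M), $$
with terminal value $g(M) = 1 / p_M$ read directly off the equation at $M$.

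Next I would verify that the explicit candidate
$$ u(k) \ := \ \sum_{k \leq m \leq M} \,\frac{q_k \,q_{k + 1} \cdots q_{m - 1}}{p_k \,p_{k + 1} \cdots p_m} \qquad (1 < k \leq M) $$
solves exactly this recursion. For $k = M$ only the term $m = M$ survives, and with the empty-product convention it equals $1 / p_M$, matching the terminal value. For $1 < k < M$, multiplying by $p_k$ cancels the first factor in every denominator; the $m = k$ term then contributes $1$ (empty products in both numerator and denominator), and the remaining terms, those with $m \geq k + 1$, factor as $q_k \,u(k + 1)$, giving $p_k \,u(k) = 1 + q_k \,u(k + 1)$. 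Since the backward recursion together with the terminal value $g(M) = 1 / p_M$ determines $g(M), g(M - 1), \dots, g(2)$ uniquely, we conclude that $g \equiv u$ on $\{2, \dots, M\}$.

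Finally, telescoping the increments and using $h(1) = 0$,
$$ E \,(T_1 \mid X_0 = k) \ = \ h(k) \ = \ \sum_{1 < n \leq k} g(n) \ = \ \sum_{1 < n \leq k} \ \sum_{n \leq m \leq M} \,\frac{q_n \,q_{n + 1} \cdots q_{m - 1}}{p_n \,p_{n + 1} \cdots p_m} \ = \ 1 + \Weight(k), $$
which is the claimed identity; for $k = 1$ both sides vanish, consistent with the empty-sum convention. This argument is essentially routine: the only points that need care are the empty-product and empty-sum bookkeeping — in particular that the $m = n$ term of the inner sum equals $1$ — and the fact that the endpoint $M$ is reflecting rather than two-sided, so that its balance equation supplies the terminal condition of the backward recursion rather than another interior relation. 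I expect no genuine obstacle beyond keeping these conventions straight.
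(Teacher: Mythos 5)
Your proof is correct and follows essentially the same route as the paper: a first-step analysis of the birth-and-death chain that reduces to the recursion $p_n\,g(n) = 1 + q_n\,g(n+1)$ with terminal value $g(M) = 1/p_M$, followed by summation over $1 < n \leq k$. The only cosmetic difference is that you work with $h(k) = E\,(T_1 \,|\, X_0 = k)$ and its increments and verify the explicit candidate, whereas the paper analyzes the one-step downward passage times $\sigma_n = E\,(T_{n-1} \,|\, X_0 = n)$ (which coincide with your $g(n)$) directly and solves the recursion by induction, identifying $\sigma_M$ as the mean of a geometric random variable.
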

\begin{proof}
 Let~$\sigma_n := E \,(T_{n - 1} \,| \,X_0 = n)$.
 Then, for all~$1 < n < M$,
 $$ \begin{array}{rcl}
    \sigma_n & = & p (n, n - 1) + (1 + \sigma_n) \,p (n, n) + (1 + \sigma_n + \sigma_{n + 1}) \,p (n, n + 1) \vspace*{3pt} \\
             & = & p_n + (1 + \sigma_n)(1 - p_n - q_n) + (1 + \sigma_n + \sigma_{n + 1}) \,q_n \vspace*{3pt} \\
             & = & p_n + (1 + \sigma_n)(1 - p_n) + q_n \,\sigma_{n + 1} \vspace*{3pt} \\
             & = & 1 + (1 - p_n) \,\sigma_n + q_n \,\sigma_{n + 1} \end{array} $$
 from which it follows, using a simple induction, that
\begin{equation}
\label{eq:hitting-1}
  \begin{array}{rcl}
    \sigma_n & = & 1 / p_n + \sigma_{n + 1} \,q_n / p_n \vspace*{4pt} \\
             & = & 1 / p_n +  q_n / (p_n \,p_{n + 1}) + \sigma_{n + 2} \,(q_n \,q_{n + 1}) / (p_n \,p_{n + 1}) \vspace*{4pt} \\
             & = & \sum_{n \leq m < M} \,(q_n \cdots q_{m - 1}) / (p_n \cdots p_m) + \sigma_M \,(q_n \cdots q_{M - 1}) / (p_n \cdots p_{M - 1}). \end{array} 
\end{equation}
 Since~$p (M, M - 1) = 1 - p (M, M) = p_M$, we also have
\begin{equation}
\label{eq:hitting-2}
  \sigma_M \ = \ E \,(T_{M - 1} \,| \,X_0 = M) \ = \ E \,(\geometric (p_M)) \ = \ 1 / p_M.
\end{equation}
 Combining~\eqref{eq:hitting-1}--\eqref{eq:hitting-2}, we deduce that
 $$ \begin{array}{l} \sigma_n \ = \ \sum_{n \leq m \leq M} \,(q_n \,q_{n + 1} \cdots q_{m - 1}) / (p_n \,p_{n + 1} \cdots p_m), \end{array} $$
 which finally gives
 $$ \begin{array}{rcl}
      E \,(T_1 \,| \,X_0 = k) & = & \sum_{1 < n \leq k} \,E \,(T_{n - 1} \,| \,X_0 = n) \ = \ \sum_{1 < n \leq k} \,\sigma_n \vspace*{4pt} \\
                              & = & \sum_{1 < n \leq k} \,\sum_{n \leq m \leq M} \,(q_n \cdots q_{m - 1}) / (p_n \cdots p_m) \ = \ 1 + \Weight (k). \end{array} $$
 This completes the proof.
\end{proof} \\ \\
 The next lemma gives a lower bound for the contribution~\eqref{eq:contribution-frozen} of an edge~$e$ that keeps track of the number
 of active piles that jump onto~$e$ before the pile at~$e$ becomes active.
 The key is to show how the number of jumps relates to the birth and death process.
 Before stating our next result, we recall that~$T_e$ is the first time the pile of particles at edge~$e$ becomes active.
\begin{figure}[t]
\centering
\scalebox{0.45}{\input{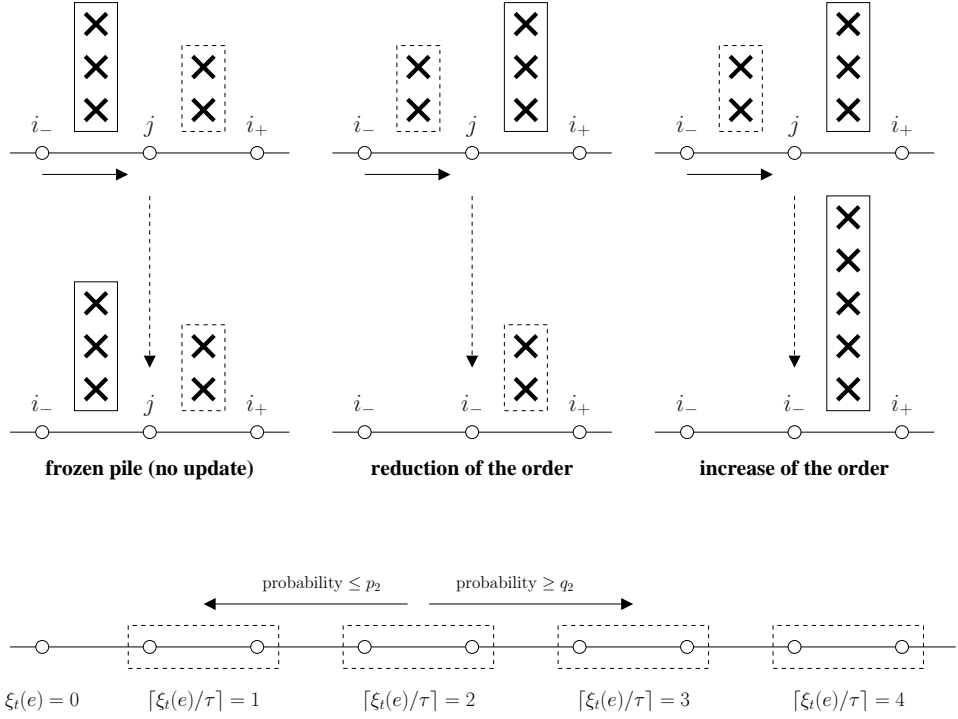}}
\caption{\upshape{Schematic illustration of the coupling between the opinion model and the system of piles along with their evolution rules.
 In our example, the threshold~$\threshold = 2$, which makes piles with three or more particles frozen piles and piles with one or two particles active piles.}}
\label{fig:coupling}
\end{figure}
\begin{lemma} --
\label{lem:coupling}
 Assume~\eqref{eq:uniform} and~\eqref{eq:dist-reg-1}.
 Then, for~$1 < k \leq \ceil{\diameter / \threshold}$,
 $$ \begin{array}{l} E \,(\cont (e \,| \,T_e < \infty)) \ \geq \ \Weight (k) \quad \hbox{when} \quad \ceil{\xi_0 (e) / \threshold} = k. \end{array} $$
\end{lemma}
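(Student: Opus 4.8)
The plan is to follow the history of the frozen pile at $e$, but sampled only at the successive times at which an active pile jumps onto it, and to recognize the resulting expected number of jumps as the expected hitting time of the birth and death chain $X_t$ already evaluated in Lemma~\ref{lem:hitting}. Write $\nu$ for the number of active piles that hit~$e$ before time~$T_e$, so that $\cont(e \,|\, T_e < \infty) = -1 + \nu$ by \eqref{eq:contribution-frozen}; it thus suffices to prove $E\,(\nu) \geq 1 + \Weight(k)$. Let $O_j$ be the order $\ceil{\xi(e)/\threshold}$ of the pile at~$e$ right after its $j$-th hit, with $O_0 = k$. Since frozen piles are static, the pile at~$e$ does not change between consecutive hits, so $T_e$ is exactly the time of the $\nu$-th hit and $\nu = \inf\{j : O_j = 1\}$. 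Two structural facts drive the argument. First, by the triangle inequality \eqref{eq:triangle} (and its left--right mirror image), a single hit alters the size of the pile at~$e$ by at most~$\threshold$ in either direction, hence alters its order by at most one, and starting from order at least~$2$ never produces an empty pile; thus $O_{j+1} \in \{O_j - 1, O_j, O_j + 1\}$ and $O_\nu = 1$. Second, because the system of piles is Markov (a consequence of Lemma~\ref{lem:collision}), conditioning on the history $\mathcal F_j$ up to the $j$-th hit fixes the sizes of the two colliding piles, so by Lemma~\ref{lem:jump}, on $\{O_j = n > 1\}$ we have $P\,(O_{j+1} = n-1 \,|\, \mathcal F_j) \leq p_n$ and $P\,(O_{j+1} = n+1 \,|\, \mathcal F_j) \geq q_n$, with $O_{j+1} = n+1$ impossible when $n = M := \ceil{\diameter / \threshold}$ since sizes never exceed~$\diameter$. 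The process $(O_j)$ need not be Markov, but these one-sided bounds are all we use.

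Next I would take as Lyapunov function $g(n) := 1 + \Weight(n)$, which by Lemma~\ref{lem:hitting} equals $E\,(T_1 \,|\, X_0 = n)$ and therefore satisfies the first-step identities $p_n\,(g(n) - g(n-1)) - q_n\,(g(n+1) - g(n)) = 1$ for $1 < n < M$ and $p_M\,(g(M) - g(M-1)) = 1$, with $g(1) = 0$. The one point that really matters is that $g$ is \emph{strictly increasing}: indeed $g(n) - g(n-1) = \Weight(n) - \Weight(n-1) = \sigma_n > 0$, with $\sigma_n$ as in the proof of Lemma~\ref{lem:hitting}. Combining this with the two structural facts, a direct computation gives, on $\{O_j > 1\}$ with $a \leq p_{O_j}$ and $b \geq q_{O_j}$ the conditional probabilities of a decrease and an increase,
$$ g(O_j) - E\,(g(O_{j+1}) \,|\, \mathcal F_j) \ = \ a\,(g(O_j) - g(O_j - 1)) - b\,(g(O_j + 1) - g(O_j)) \ \leq \ p_{O_j}\,(g(O_j) - g(O_j - 1)) - q_{O_j}\,(g(O_j + 1) - g(O_j)) \ = \ 1, $$
where both consecutive differences of~$g$ are positive and where the case $O_j = M$ is covered by the boundary identity since then $b = 0$. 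Hence $E\,(g(O_{j+1}) \,|\, \mathcal F_j) \geq g(O_j) - 1$ whenever $O_j > 1$, which says exactly that $j \mapsto g(O_{j \wedge \nu}) + (j \wedge \nu)$ is a nonnegative submartingale.

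Finally I would pass from this submartingale inequality to the expectation bound. Taking expectations gives $E\,(g(O_{j \wedge \nu})) + E\,(j \wedge \nu) \geq g(O_0) = g(k)$ for every~$j$. On the event $\{T_e < \infty\}$ on which we condition, $\nu < \infty$ and $O_\nu = 1$, so $g(O_{j \wedge \nu}) = g(O_j)\,\ind\{\nu > j\}$ and, since $g$ is bounded, $E\,(g(O_{j \wedge \nu})) \leq \|g\|_\infty\, P\,(\nu > j) \to 0$ as $j \to \infty$; meanwhile $E\,(j \wedge \nu) \uparrow E\,(\nu)$ by monotone convergence. Letting $j \to \infty$ yields $E\,(\nu) \geq g(k) = 1 + \Weight(k)$, whence $E\,(\cont(e \,|\, T_e < \infty)) = E\,(\nu) - 1 \geq \Weight(k)$, as claimed. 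The genuine content of the proof is imported: it is the uniform one-sided control of the order transition by $p_n$ and $q_n$ irrespective of the exact sizes of the colliding piles, which is what Lemmas~\ref{lem:collision} and~\ref{lem:jump} — and ultimately the distance-regularity hypothesis~\eqref{eq:dist-reg-1} — provide. Given those, the only mildly delicate steps here are the bookkeeping that identifies $T_e$ with the $\nu$-th hit and rules out an empty pile (so that $g(O_\nu) = 0$), the correct treatment of the boundary state $M$, and the $P\,(\nu > j) \to 0$ estimate needed to close the stopping argument; none of these is hard, and the monotonicity of $g$ is what makes everything fit together.
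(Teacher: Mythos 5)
Your proof is correct, and it takes a genuinely different technical route from the paper's. The paper argues by stochastic domination: it asserts that the order of the pile at~$e$, observed at the successive hits, dominates the birth and death chain~$X_t$ before it reaches state~1, and then transfers the bound on hitting times. You instead run a Lyapunov/optional-stopping argument with the potential $g(n) = 1 + \Weight(n) = E\,(T_1 \,|\,X_0 = n)$, showing that $j \mapsto g(O_{j \wedge \nu}) + (j \wedge \nu)$ is a submartingale and letting $j \to \infty$. What your version buys is precision about why the one-sided bounds $a \leq p_n$, $b \geq q_n$ from Lemma~\ref{lem:jump} suffice: domination of hitting times does not follow from such bounds for an arbitrary target function, and the missing ingredient --- which the paper leaves implicit in the word ``dominates'' --- is exactly the monotonicity $g(n) - g(n-1) = \sigma_n > 0$ that you isolate. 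You also handle the boundary state $M = \ceil{\diameter/\threshold}$ and the identification of $T_e$ with the $\nu$-th hit (using that frozen piles only change when hit, and that a hit on a pile of order at least two cannot empty it) more explicitly than the paper does. The two points where you are no more rigorous than the paper, and which you inherit from it, are (i) the upgrade of Lemma~\ref{lem:jump} from a bound conditional on the two colliding sizes to a bound conditional on the full history $\mathcal F_j$, which both arguments justify by appealing to the Markov property of the system of piles derived from Lemma~\ref{lem:collision}, and (ii) the fact that the expectation in the statement is implicitly taken on (or conditional on) the event $\{T_e < \infty\}$, whereas the transition bounds are stated under the unconditional law; neither you nor the paper addresses whether this conditioning could bias the jump outcomes, so this is not a defect of your argument relative to the intended one.
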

\begin{proof}
 Since active piles have at most~$\threshold$ particles, the triangle inequality~\eqref{eq:triangle} implies that the jump of an active pile onto
 a frozen pile can only increase or decrease its size by at most~$\threshold$ particles, and therefore can only increase or decrease its order by at most one.
 In particular,
 $$ \begin{array}{l}
      P \,(|\ceil{\xi_t (e) / \threshold} - \ceil{\xi_{t-} (e) / \threshold}| > 2 \,| \,x - 1 \to_t x) \ = \ 0. \end{array} $$
 This, together with the bounds in Lemma~\ref{lem:jump} and the fact that the outcomes of consecutive jumps of active piles onto a frozen pile are independent
 as explained in the proof of Lemma~\ref{lem:collision}, implies that the order of a frozen pile before it becomes active dominates stochastically the
 state of the birth and death process~$X_t$ before it reaches state~1.
 In particular,
 $$ \begin{array}{l} E \,(\cont (e \,| \,T_e < \infty)) \ \geq \ - 1 + E \,(T_1 \,| \,X_0 = k) \quad \hbox{when} \quad \ceil{\xi_0 (e) / \threshold} = k. \end{array} $$
 Using Lemma~\ref{lem:hitting}, we conclude that
 $$ \begin{array}{l} E \,(\cont (e \,| \,T_e < \infty)) \ \geq \ - 1 + (1 + \Weight (k)) \ = \ \Weight (k) \end{array} $$
 whenever~$\ceil{\xi_0 (e) / \threshold} = k$.
\end{proof} \\ \\
 We now have all the necessary tools to prove the theorem.
 The key idea is the same as in the proof of Lemma~\ref{lem:expected-weight} but relies on the previous lemma in place of Lemma~\ref{lem:deterministic}. \\ \\
\begin{demo}{Theorem~\ref{th:dist-reg}} --
 Assume~\eqref{eq:uniform} and~\eqref{eq:dist-reg-1} and
 $$ \begin{array}{l} S_{\reg} (\OO, \threshold) \ = \ \sum_{k > 0} \,(\Weight (k) \,\sum_{s : \ceil{s / \threshold} = k} \,h (s)) \ > \ 0. \end{array} $$
 Since the opinion graph is distance-regular,
 $$ \begin{array}{rcl}
      P \,(\xi_0 (e) = s) & = & \sum_{i \in \VO} \,P \,(\xi_0 (e) = s \,| \,\eta_0 (e - 1/2) = i) \,P \,(\eta_0 (e - 1/2) = i) \vspace*{4pt} \\
                          & = & \sum_{i \in \VO} \,F^{-1} \,\card \{j \in \VO : d (i, j) = s \} \ P \,(\eta_0 (e - 1/2) = i) \vspace*{4pt} \\
                          & = & \sum_{i \in \VO} \,F^{-1} \,h (s) \,P \,(\eta_0 (e - 1/2) = i) \ = \ F^{-1} \,h (s). \end{array} $$
 Using also Lemma~\ref{lem:coupling}, we get
 $$ \begin{array}{rcl}
      E \,(\cont (e \,| \,T_e < \infty)) & \geq & \sum_{k > 0} \,\Weight (k) \,P \,(\ceil{\xi_0 (e) / \threshold} = k) \vspace*{4pt} \\
                                         & = & \sum_{k > 0} \,\Weight (k) \,P \,((k - 1) \threshold < \xi_0 (e) \leq k \threshold) \vspace*{4pt} \\
                                         & = & \sum_{k > 0} \,\Weight (k) \,\sum_{s : \ceil{s / \threshold} = k} \,F^{-1} \,h (s) \vspace*{4pt} \\
                                         & = & F^{-1} \,S_{\reg} (\OO, \threshold) \ > \ 0. \end{array} $$
 Now, let~$\Weight_e$ be the collection of random variables
 $$ \begin{array}{l} \Weight_e \ := \ \sum_{k > 0} \,\Weight (k) \,\ind \{\xi_0 (e) = k \} \quad \hbox{for all} \quad e \in \Z + 1/2. \end{array} $$
 Using Lemma~\ref{lem:weight} and the fact the number of collisions to turn a frozen pile into an active pile is independent for different
 frozen piles, we deduce that there exists~$c_{11} > 0$ such that
 $$ \begin{array}{l}
      P \,(\sum_{e \in (0, N)} \cont (e \,| \,T_e < \infty) \leq 0) \ \leq \
      P \,(\sum_{e \in (0, N)} \Weight_e \leq 0) \vspace*{4pt} \\ \hspace*{40pt} = \
      P \,(\sum_{e \in (0, N)} \,(\Weight_e - E \Weight_e) \notin (- \ep N, \ep N)) \ \leq \ \exp (- c_{11} N) \end{array} $$
 for all~$N$ large.
 This, together with~\eqref{eq:inclusion-1}, implies that
 $$ \begin{array}{rcl}
      P \,(H_N) & \leq &
      P \,(\sum_{e \in (l, r)} \cont (e \,| \,T_e < \infty) \leq 0 \ \hbox{for some~$l < - N$ and~$r \geq 0$}) \vspace*{4pt} \\ & \leq &
           \sum_{l < - N} \,\sum_{r \geq 0} \,\exp (- c_{11} \,(r - l)) \ \to \ 0 \end{array} $$
 as~$N \to \infty$.
 In particular, it follows from Lemma~\ref{lem:fixation-condition} that the process fixates.
\end{demo}


\section{Proof of Corollaries~\ref{cor:path}--\ref{cor:hypercube}}
\label{sec:graphs}

\indent This section is devoted to the proof of Corollaries~\ref{cor:path}--\ref{cor:hypercube} that give sufficient conditions
 for fluctuation and fixation of the infinite system for the opinion graphs shown in Figure~\ref{fig:graphs}.
 To begin with, we prove the fluctuation part of all the corollaries at once. \\ \\
\begin{demo}{Corollaries~\ref{cor:path}--\ref{cor:hypercube} (fluctuation)} --
 We start with the tetrahedron.
 In this case, the diameter equals one therefore, whenever the threshold is positive, the system reduces to a four-opinion voter model,
 which is known to fluctuate according to~\cite{arratia_1983}.
 To deal with paths and stars, we recall that combining Theorem~\ref{th:fluctuation}a and Lemma~\ref{lem:partition} gives fluctuation
 when~$\radius \leq \threshold$.
 Recalling also the expression of the radius from Table~\ref{tab:summary} implies fluctuation when
 $$ \begin{array}{rl}
      F \leq 2 \threshold + 1 & \hbox{for the path with~$F$ vertices} \vspace*{3pt} \\
      r \leq   \threshold     & \hbox{for the star with~$b$ branches of length~$r$}. \end{array} $$
 For the other graphs, it suffices to find a partition that satisfies~\eqref{eq:fluctuation}.
 For the remaining four regular polyhedra and the hypercubes, we observe that there is a unique vertex at distance~$\diameter$ of any
 given vertex.
 In particular, fixing an arbitrary vertex~$i_-$ and setting
 $$ V_1 \ := \ \{i_-, i_+ \} \quad \hbox{and} \quad V_2 \ := \ V \setminus V_1 \quad \hbox{where} \quad d (i_-, i_+) = \diameter $$
 defines a partition of the set of opinions such that
 $$ d (i, j) \ \leq \ \diameter - 1 \quad \hbox{for all} \quad (i, j) \in V_1 \times V_2. $$
 Recalling the expression of the diameter from Table~\ref{tab:summary} and using~Theorem~\ref{th:fluctuation}a give the fluctuation
 parts of Corollaries~\ref{cor:polyhedron} and~\ref{cor:hypercube}.
 Using the exact same approach implies fluctuation when the opinion graph is a cycle with an even number of vertices and~$F \leq 2 \threshold + 2$.
 For cycles with an odd number of vertices, we again use Lemma~\ref{lem:partition} to deduce fluctuation if
 $$ \integer{F / 2} = \radius \leq \threshold \quad \hbox{if and only if} \quad F \leq 2 \threshold + 1 \quad \hbox{if and only if} \quad F \leq 2 \threshold + 2, $$
 where the last equivalence is true because~$F$ is odd.
\end{demo} \\ \\
 We now prove the fixation part of the corollaries using Theorems~\ref{th:fixation} and~\ref{th:dist-reg}.
 The first two classes of graphs, paths and stars, are not distance-regular therefore, to study the behavior of the
 model for these opinion graphs, we rely on the first part of Theorem~\ref{th:fixation}. \\ \\
\begin{demo}{Corollary~\ref{cor:path} (path)} --
 Assume that~$4 \threshold < \diameter = F - 1 \leq 5 \threshold$. Then,
 $$ \begin{array}{rcl}
     S (\OO, \threshold) & = & \sum_{k > 0} \,((k - 2) \,\sum_{s : \ceil{s / \threshold} = k} \,N (\OO, s)) \vspace*{4pt} \\
                            & = & \sum_{0 < k \leq 4} \,((k - 2) \,\sum_{s : \ceil{s / \threshold} = k} \,2 \,(F - s)) + 3 \,\sum_{4 \threshold < s \leq d} \,2 \,(F - s) \vspace*{4pt} \\
                            & = & \sum_{0 < k \leq 4} \,((k - 2)(2 F \threshold - (k \threshold)(k \threshold + 1) + ((k - 1) \,\threshold)((k - 1) \,\threshold + 1)) \vspace*{4pt} \\ && \hspace*{50pt} + \
                                   3 \,(2F \,(F - 4 \threshold - 1) - F \,(F - 1) + 4 \threshold \,(4 \threshold + 1)) \vspace*{4pt} \\
                            & = & 4 F \threshold + \threshold \,(\threshold + 1) + 2 \threshold \,(2 \threshold + 1) + 3 \threshold \,(3 \threshold + 1) \vspace*{4pt} \\ && \hspace*{50pt} + \
                                  4 \threshold \,(4 \threshold + 1) + 6 F \,(F - 4 \threshold - 1) - 3 F \,(F - 1) \vspace*{4pt} \\
                            & = & 3 F^2 - (20 \threshold + 3) \,F + 10 \,(3 \threshold + 1) \,\threshold. \end{array} $$
 Since the largest root~$F_+ (\threshold)$ of this polynomial satisfies
 $$ 4 \threshold \leq F_+ (\threshold) - 1 = (1/6)(20 \,\threshold + 3 + \sqrt{40 \,\threshold^2 + 9}) - 1 \leq 5 \threshold \quad \hbox{for all} \quad \threshold \geq 1 $$
 and since for any fixed~$\threshold$ the function~$F \mapsto S (\OO, \threshold)$ is nondecreasing, we deduce that fixation occurs under the assumptions of the lemma
 according to Theorem~\ref{th:fixation}.
\end{demo} \\ \\
 The case of the star with~$b$ branches of equal length~$r$ is more difficult mainly because there are two different expressions for the number of pairs of vertices at a
 given distance of each other depending on whether the distance is smaller or larger than the branches' length.
 In the next lemma, we compute the number of pairs of vertices at a given distance of each other, which we then use to find a condition for fixation when
 the opinion graph is a star.
\begin{lemma} --
\label{lem:star}
 For the star with~$b$ branches of length~$r$,
 $$ \begin{array}{rclcl}
      N (\OO, s) & = & b \,(2r + (b - 3)(s - 1)) & \hbox{for all} & s \in (0, r] \vspace*{3pt} \\
                    & = & b \,(b - 1)(2r - s + 1)   & \hbox{for all} & s \in (r, 2r]. \end{array} $$
\end{lemma}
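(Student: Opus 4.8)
The plan is to evaluate $N(\OO, s)$ by a direct enumeration, splitting the ordered pairs of opinions at opinion distance $s$ according to their position relative to the center of the star. First I would fix notation: let $c$ denote the center, index the $b$ branches by $\ell \in \{1, \dots, b\}$, and write $(\ell, a)$ for the unique vertex of branch $\ell$ at distance $a$ from $c$, where $a \in \{1, \dots, r\}$; together with $c$ this lists all $F = 1 + br$ vertices. The graph distance is then given by the elementary identities $d(c, (\ell, a)) = a$, by $d((\ell, a), (\ell, a')) = |a - a'|$ when the two vertices lie on a common branch, and by $d((\ell, a), (\ell', a')) = a + a'$ when $\ell \neq \ell'$, the last one because every path joining vertices on distinct branches must go through the center.

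For $s \in (0, r]$ I would partition the ordered pairs at distance $s$ into three families. The pairs involving $c$ are $(c, (\ell, s))$ and $((\ell, s), c)$ for $\ell = 1, \dots, b$, giving $2b$ of them. The pairs on a common branch are the $(\ell, a), (\ell, a')$ with $|a - a'| = s$ and $a, a' \in \{1, \dots, r\}$; for each branch there are $r - s$ such ordered pairs with $a' = a + s$ and another $r - s$ with $a' = a - s$, for a total of $2b(r - s)$. The pairs on two distinct branches are the $(\ell, a), (\ell', a')$ with $\ell \neq \ell'$ and $a + a' = s$; here $a$ may be any element of $\{1, \dots, s - 1\}$ (and then $a, a' \le s - 1 \le r - 1$ automatically), and there are $b(b - 1)$ ordered choices for the pair of branches, giving $b(b - 1)(s - 1)$. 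Adding the three contributions and simplifying yields $N(\OO, s) = 2b + 2b(r - s) + b(b - 1)(s - 1) = b\,(2r + (b - 3)(s - 1))$, which is the first asserted formula.

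For $s \in (r, 2r]$ the center is automatically excluded, since every vertex lies within distance $r < s$ of $c$, and no two vertices on a common branch are at distance $s$, since the largest in-branch distance is $r - 1 < s$. Hence only the distinct-branch pairs remain: $(\ell, a), (\ell', a')$ with $\ell \neq \ell'$ and $a + a' = s$, and now the constraints $1 \le a \le r$ and $1 \le a' = s - a \le r$ restrict $a$ to $\{s - r, \dots, r\}$, a set of cardinality $2r - s + 1$. Multiplying by the $b(b - 1)$ ordered choices of the two branches gives $N(\OO, s) = b\,(b - 1)(2r - s + 1)$, as claimed. There is no real obstacle here; the only thing to watch is keeping the index ranges correct in the degenerate cases $s = 1$ and $s = r$, and noting that the two regimes genuinely require different formulas because the center and the same-branch pairs contribute only when $s \le r$.
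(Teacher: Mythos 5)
Your proof is correct and takes essentially the same approach as the paper's: a direct elementary count of the ordered pairs of opinions at distance $s$, split according to the geometry of the star, with the two regimes $s\le r$ and $s>r$ treated separately. The only difference is bookkeeping --- the paper counts embedded directed paths per branch and per pair of branches (and, for $s>r$, counts admissible targets per source vertex according to its distance from the center), whereas you separate the center-involving pairs explicitly and count per ordered pair of branches --- and the two tallies agree.
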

\begin{proof}
 Let~$n_1 (s)$ and~$n_2 (s)$ be respectively the number of directed paths of length~$s$ embedded in a given branch of the star and the total
 number of directed paths of length~$s$ embedded in a given pair of branches of the star.
 Then, as in the proof of the corollary for paths,
 $$ n_1 (s) = 2 \,(r + 1 - s) \quad \hbox{and} \quad n_2 (s) = 2 \,(2r + 1 - s) \quad \hbox{for all} \quad s \leq r. $$
 Since there are~$b$ branches and $(1/2)(b - 1) \,b$ pairs of branches, and since self-avoiding paths embedded in the star cannot
 intersect more than two branches, we deduce that
 $$ \begin{array}{rcl}
      N (\OO, s) & = & b \,n_1 (s) + ((1/2)(b - 1) \,b)(n_2 (s) - 2 n_1 (s)) \vspace*{4pt} \\
                    & = & 2b \,(r + 1 - s) + b \,(b - 1)(s - 1) \vspace*{4pt} \\
                    & = & b \,(2r + 2 \,(1 - s) + (b - 1)(s - 1)) \ = \ b \,(2r + (b - 3)(s - 1)) \end{array} $$
 for all~$s \leq r$.
 To deal with~$s > r$, we let~$o$ be the center of the star and observe that there is no vertex at distance~$s$ of vertices which are
 close to the center whereas there are~$b - 1$ vertices at distance~$s$ from vertices which are far from the center.
 More precisely,
 $$ \begin{array}{rclcl}
    \card \{j \in \VO : d (i, j) = s \} & = & 0     & \quad \hbox{when} & d (i, o) < s - r \vspace*{3pt} \\
    \card \{j \in \VO : d (i, j) = s \} & = & b - 1 & \quad \hbox{when} & d (i, o) \geq s - r. \end{array} $$
 The number of directed paths of length~$s$ is then given by
 $$ \begin{array}{rcl}
      N (\OO, s) & = & (b - 1) \,\card \{i \in \VO : d (i, o) \geq s - r \} \vspace*{4pt} \\
                    & = & b \,(b - 1)(r - (s - r - 1)) \ = \ b \,(b - 1)(2r - s + 1) \end{array} $$
 for all~$s > r$.
 This completes the proof of the lemma.
\end{proof} \\ \\
\begin{demo}{Corollary~\ref{cor:star} (star)} --
 Assume that~$3 \threshold < \diameter = 2r \leq 4 \threshold$. Then,
 $$ \begin{array}{rcl}
     S (\OO, \threshold) & = & \sum_{k > 0} \,((k - 2) \,\sum_{s : \ceil{s / \threshold} = k} \,N (\OO, s)) \vspace*{4pt} \\
                            & = & - \ \sum_{0 < s \leq \threshold} \,N (\OO, s) + \sum_{2 \threshold < s \leq 3 \threshold} \,N (\OO, s) + 2 \,\sum_{3 \threshold < s \leq 2r} \,N (\OO, s). \end{array} $$
 Since~$\threshold < r \leq 2 \threshold$, it follows from Lemma~\ref{lem:star} that
 $$ \begin{array}{rcl}
     S (\OO, \threshold) & = & - \ \sum_{0 < s \leq \threshold} \,b \,(2r + (b - 3)(s - 1)) \vspace*{4pt} \\ &&
                                  + \ \sum_{2 \threshold < s \leq 3 \threshold} \,b \,(b - 1)(2r - s + 1) + 2 \,\sum_{3 \threshold < s \leq 2r} \,b \,(b - 1)(2r - s + 1) \vspace*{4pt} \\
                            & = & - \ b \,(2r - b + 3) \,\threshold - (b/2)(b - 3) \,\threshold \,(\threshold + 1) \vspace*{4pt} \\ &&
                                  + \ b \,(b - 1)(2r + 1) \,\threshold + (b/2)(b - 1)(2 \threshold \,(2 \threshold + 1) - 3 \threshold \,(3 \threshold + 1)) \vspace*{4pt} \\ &&
                                  + \ 2b \,(b - 1)(2r + 1)(2r - 3 \threshold) + b \,(b - 1)(3 \threshold \,(3 \threshold + 1) - 2 r \,(2r + 1)). \end{array} $$
 Expanding and simplifying, we get
 $$ (1/b) \,S (\OO, \threshold) \ = \ 4 \,(b - 1) \,r^2 + 2 \,((4 - 5b) \,\threshold + b - 1) \,r + (6b - 5) \,\threshold^2 + (1 - 2b) \,\threshold. $$
 As for paths, the result is a direct consequence of Theorem~\ref{th:fixation}.
\end{demo} \\ \\
 The remaining graphs in Figure~\ref{fig:graphs} are distance-regular, which makes Theorem~\ref{th:dist-reg} applicable.
 Note that the conditions for fixation in the last three corollaries give minimal values for the confidence threshold that lie between one third and
 one half of the diameter.
 In particular, we apply the theorem in the special case when~$\ceil{\diameter / \threshold} = 3$.
 In this case, we have
 $$ \Weight (1) \ = \ - 1 \qquad \Weight (2) \ = \ \Weight (1) + (1 / p_2)(1 + q_2 / p_3) \qquad \Weight (3) \ = \ \Weight + 1 / p_3 $$
 so the left-hand side of~\eqref{eq:th-dist-reg} becomes
\begin{equation}
\label{eq:common}
  \begin{array}{rcl}
    S_{\reg} (\OO, \threshold) & = &   \sum_{0 < k \leq 3} \,(\Weight (k) \,\sum_{s : \ceil{s / \threshold} = k} \,h (s)) \vspace*{4pt} \\ & = &
                                     - \ (h (1) + h (2) + \cdots + h (\diameter)) \vspace*{4pt} \\ &&
                                     + \ (1/p_2)(1 + q_2 / p_3)(h (\threshold + 1) + h (\threshold + 2) + \cdots + h (\diameter)) \vspace*{4pt} \\ &&
                                     + \ (1/p_3)(h (2 \threshold + 1) + h (2 \threshold + 2) + \cdots + h (\diameter)). \end{array}
\end{equation}
 This expression is used repeatedly to prove the remaining corollaries. \\ \\
\begin{demo}{Corollary~\ref{cor:polyhedron} (cube)} --
 When~$\OO$ is the cube and~$\threshold = 1$, we have
 $$ p_2 \ = \ f (1, 2, 1) / h (2) \ = \ 2/3 \quad \hbox{and} \quad q_2 \ = \ f (1, 2, 3) / h (2) \ = \ 1/3 $$
 which, together with~\eqref{eq:common} and the fact that~$p_3 \leq 1$, implies that
 $$ \begin{array}{rcl}
      S_{\reg} (\OO, 1) & \geq & - \ (h (1) + h (2) + h (3)) + (1/p_2)(1 + q_2)(h (2) + h (3)) + h (3) \vspace*{4pt} \\
                           & = & - \ (3 + 3 + 1) + (3/2)(1 + 1/3)(3 + 1) + 1 \ = \ 2 \ > \ 0. \end{array} $$
 This proves fixation according to Theorem~\ref{th:dist-reg}.
\end{demo} \\ \\
\begin{demo}{Corollary~\ref{cor:polyhedron} (icosahedron)} --
 When~$\OO$ is the icosahedron and~$\threshold = 1$,
 $$ p_2 \ = \ f (1, 2, 1) / h (2) \ = \ 2/5 \qquad \hbox{and} \qquad q_2 \ = \ f (1, 2, 3) / h (2) \ = \ 1/5. $$
 Using in addition~\eqref{eq:common} and the fact that~$p_3 \leq 1$, we obtain
 $$ \begin{array}{rcl}
      S_{\reg} (\OO, 1) & \geq & - \ (h (1) + h (2) + h (3)) + (1/p_2)(1 + q_2)(h (2) + h (3)) + h (3) \vspace*{4pt} \\
                           & = & - \ (5 + 5 + 1) + (5/2)(1 + 1/5)(5 + 1) + 1 \ = \ 8 \ > \ 0 \end{array} $$
 which, according to Theorem~\ref{th:dist-reg}, implies fixation.
\end{demo} \\ \\
\begin{demo}{Corollary~\ref{cor:polyhedron} (dodecahedron)} --
 Fixation of the opinion model when the threshold equals one directly follows from Theorem~\ref{th:fixation} since in this case
 $$ \begin{array}{rcl}
      F^{-1} \,S (\OO, 1) & = & (1/20)(- h (1) + h (3) + 2 \,h (4) + 3 \,h (5)) \vspace*{3pt} \\
                             & = & (1/20)(- 3 + 6 + 2 \times 3 + 3 \times 1) \ = \ 3/5 \ > \ 0. \end{array} $$
 However, when the threshold~$\threshold = 2$,
 $$ \begin{array}{rcl}
      F^{-1} \,S (\OO, 2) & = & (1/20)(- h (1) - h (2) + h (5)) \vspace*{3pt} \\
                             & = & (1/20)(- 3 - 6 + 1) \ = \ - 2/5 \ < \ 0 \end{array} $$
 so we use Theorem~\ref{th:dist-reg} instead: when~$\threshold = 2$, we have
 $$ \begin{array}{rcl}
      p_2 & = & \max \,\{\sum_{s = 1, 2} f (s_-, s_+, s) / h (s_+) : s_- = 1, 2 \ \hbox{and} \ s_+ = 3, 4 \} \vspace*{4pt} \\
          & = & \max \,\{f (1, 3, 2) / h (3), (f (2, 3, 2) + f (2, 3, 1)) / h (3), f (2, 4, 2) / h (4) \} \vspace*{4pt} \\
          & = & \max \,\{2/6, (2 + 1) / 6, 1/3 \} \ = \ 1/2. \end{array} $$
 In particular, using~\eqref{eq:common} and the fact that~$p_3 \leq 1$ and~$q_2 \geq 0$, we get
 $$ \begin{array}{rcl}
      S_{\reg} (\OO, 2) & \geq & - \ (h (1) + h (2) + h (3) + h (4) + h (5)) \vspace*{4pt} \\ && \hspace*{25pt} + \
                                     (1/p_2)(h (3) + h (4) + h (5)) + h (5) \vspace*{4pt} \\
                           & = & - \ (3 + 6 + 6 + 3 + 1) + 2 \times (6 + 3 + 1) + 1 \ = \ 2 \ > \ 0, \end{array} $$
 which again gives fixation.
\end{demo} \\ \\
\begin{demo}{Corollary~\ref{cor:cycle} (cycle)} --
 Regardless of the parity of~$F$,
\begin{equation}
\label{eq:cycle-1}
  \begin{array}{rclclcl}
     f (s_-, s_+, s) & = & 0 & \hbox{when} & s_- \leq s_+ \leq \diameter & \hbox{and} & s > s_+ - s_- \vspace*{2pt} \\
     f (s_-, s_+, s) & = & 1 & \hbox{when} & s_- \leq s_+ \leq \diameter & \hbox{and} & s = s_+ - s_- \end{array}
\end{equation}
 while the number of vertices at distance~$s_+$ of a given vertex is
\begin{equation}
\label{eq:cycle-2}
  h (s_+) = 2 \ \ \hbox{for all} \ \ s_+ < F/2 \quad \hbox{and} \quad h (s_+) = 1 \ \ \hbox{when} \ \ s_+ = F/2 \in \N.
\end{equation}
 Assume that~$F = 4 \threshold + 2$.
 Then, $\diameter = 2 \threshold + 1$ so it follows from~\eqref{eq:cycle-1}--\eqref{eq:cycle-2} that
 $$ \begin{array}{rcl}
      p_2 & = & \max \,\{\sum_{s : \ceil{s / \threshold} = 1} f (s_-, s_+, s) / h (s_+) : \ceil{s_- / \threshold} = 1 \ \hbox{and} \ \ceil{s_+ / \threshold} = 2 \} \vspace*{3pt} \\
          & = & \max \,\{f (s_-, s_+, s_+ - s_-) / h (s_+) : \ceil{s_- / \threshold} = 1 \ \hbox{and} \ \ceil{s_+ / \threshold} = 2 \} \vspace*{3pt} \\
          & = & \max \,\{f (s_-, s_+, s_+ - s_-) / h (s_+) : \ceil{s_+ / \threshold} = 2 \} \ = \ 1/2. \end{array} $$
 Using in addition that~$p_3 \leq 1$ and~$q_2 \geq 0$ together with~\eqref{eq:common}, we get
 $$ \begin{array}{rcl}
      S_{\reg} (\OO, \threshold) & \geq & - \ (h (1) + h (2) + \cdots + h (2 \threshold + 1)) \vspace*{4pt} \\ &&
                                             + \ (1/p_2)(h (\threshold + 1) + h (\threshold + 2) + \cdots + h (2 \threshold + 1)) + h (2 \threshold + 1) \vspace*{4pt} \\
                                    & = & - \ (4 \threshold + 1) + 2 \times (2 \threshold + 1) + 1 \ = \ 2 \ > \ 0. \end{array} $$
 In particular, the corollary follows from Theorem~\ref{th:dist-reg}.
\end{demo} \\ \\
\begin{demo}{corollary~\ref{cor:hypercube} (hypercube)} --
 The first part of the corollary has been explained heuristically in~\cite{adamopoulos_scarlatos_2012}.
 To turn it into a proof, we first observe that opinions on the hypercube can be represented by vectors with coordinates equal to zero or one while the distance
 between two opinions is the number of coordinates the two corresponding vectors disagree on.
 In particular, the number of opinions at distance~$s$ of a given opinion, namely~$h (s)$, is equal to the number of subsets of size~$s$ of a set of size~$d$.
 Therefore, we have the symmetry property
\begin{equation}
\label{eq:hypercube-1}
  h (s) \ = \ {d \choose s} \ = \ {d \choose d - s} \ = \ h (d - s) \quad \hbox{for} \quad s = 0, 1, \ldots, d,
\end{equation}
 from which it follows that, for~$d = 3 \threshold + 1$,
 $$ \begin{array}{rcl}
      2^{-d} \,S (\OO, \threshold) & = & - \ h (1) - \cdots - h (\threshold) + h (2 \threshold + 1) + \cdots + h (d - 1) + 2 \,h (d) \vspace*{3pt} \\
                                   & = & h (d - 1) - h (1) + h (d - 2) - h (2) + \cdots + h (d - \threshold) - h (\threshold) + 2 \,h (d) \vspace*{3pt} \\
                                   & = & 2 \,h (d) \ = \ 2 \ > \ 0. \end{array} $$
 Since in addition the function~$d \mapsto S (\OO, \threshold)$ is nondecreasing, a direct application of Theorem~\ref{th:fixation} gives the first part of the corollary.
 The second part is more difficult.
 Note that, to prove this part, it suffices to show that, for any fixed~$\sigma > 0$, fixation occurs when
\begin{equation}
\label{eq:hypercube-2}
  d \ = \ (2 + 3 \sigma) \,\threshold \quad \hbox{and} \quad \threshold \ \ \hbox{is large}.
\end{equation}
 The main difficulty is to find a good upper bound for~$p_2$ which relies on properties of the hypergeometric random variable.
 Let~$u$ and~$v$ be two opinions at distance~$s_-$ of each other.
 By symmetry, we may assume without loss of generality that both vectors disagree on their first~$s_-$ coordinates.
 Then, changing each of the first~$s_-$ coordinates in either one vector or the other vector and changing each of the remaining coordinates in either both vectors
 simultaneously or none of the vectors result in the same vector.
 In particular, choosing a vector~$w$ such that
 $$ d (u, w) \ = \ s_+ \quad \hbox{and} \quad d (v, w) \ = \ s $$
 is equivalent to choosing~$a$ of the first~$s_-$ coordinates and then choosing~$b$ of the remaining~$d - s_-$ coordinates with the following constraint:
 $$ a + b \ = \ s_+ \quad \hbox{and} \quad (s_- - a) + b \ = \ s. $$
 In particular, letting~$K := \ceil{(1/2)(s_- + s_+ - \threshold)}$, we have
 $$ \sum_{s = 1}^{\threshold} \ f (s_-, s_+, s) \ = \ \sum_{a = K}^{s_-} {s_- \choose a}{d - s_- \choose s_+ - a} \ = \ h (s_+) \,P \,(Z \geq K) $$
 where~$Z = \hypergeometric (d, s_-, s_+)$.
 In order to find an upper bound for~$p_2$ and deduce fixation, we first prove the following lemma about the hypergeometric random variable.
\begin{lemma} --
 Assume~\eqref{eq:hypercube-2}, that~$\ceil{s_- / \threshold} = 1$ and~$\ceil{s_+ / \threshold} = 2$. Then,
 $$ P \,(Z \geq K) \ = \ \sum_{a = K}^{s_-} {s_- \choose a}{d - s_- \choose s_+ - a}{d \choose s_+}^{-1} \leq \ 1/2. $$
\end{lemma}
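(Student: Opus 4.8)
The plan is to read the statement through the identity $Z\sim\hypergeometric(d,s_-,s_+)$, so that $E\,Z=s_-s_+/d$, and to prove the bound in two stages: first a purely deterministic comparison showing that $K$ exceeds the mean of $Z$ by an amount proportional to $s_-$, and then a probabilistic step turning that margin into the tail bound $P(Z\ge K)\le 1/2$.

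First I would record the constraints coming from $\ceil{s_-/\threshold}=1$ and $\ceil{s_+/\threshold}=2$, namely $1\le s_-\le\threshold<s_+\le 2\threshold$, which give $(s_--\threshold)(s_+-\threshold)\le 0$, that is, $\threshold(s_-+s_+-\threshold)\ge s_-s_+$. Using this together with $d=(2+3\sigma)\threshold$ and the elementary bound $K\ge(s_-+s_+-\threshold)/2$, I would obtain
$$ K-E\,Z \ \ge \ \frac{(2+3\sigma)\,\threshold\,(s_-+s_+-\threshold)-2s_-s_+}{2(2+3\sigma)\,\threshold} \ \ge \ \frac{3\sigma\,\threshold\,(s_-+s_+-\threshold)}{2(2+3\sigma)\,\threshold} \ \ge \ c_\sigma\,s_-, $$
where $c_\sigma:=3\sigma/(2(2+3\sigma))>0$; the middle inequality is exactly $\threshold(s_-+s_+-\threshold)\ge s_-s_+$, and the last one uses $s_+\ge\threshold$.

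For the probabilistic step, when $s_-$ is not too small I would apply the Chernoff-type bound $P(Z\ge E\,Z+a)\le\exp(-2a^2/s_-)$ for sampling without replacement (reading $Z$ as a sample of size $s_-$, the smaller of its two sample-size interpretations), which with $a=K-E\,Z\ge c_\sigma s_-$ gives $P(Z\ge K)\le\exp(-2c_\sigma^2 s_-)\le 1/2$ as soon as $s_-\ge(\ln 2)/(2c_\sigma^2)$. For the finitely many remaining values $1\le s_-<(\ln 2)/(2c_\sigma^2)$ I would split on $r:=s_+-\threshold$: if $r>s_-$ then $K\ge s_-+1$ while $Z\le\min(s_-,s_+)=s_-$, so $P(Z\ge K)=0$; and if $1\le r\le s_-$ there are only finitely many pairs $(s_-,r)$, for each of which, letting $\threshold\to\infty$ with $s_-$ and $r$ fixed, one has $s_+/d\to 1/(2+3\sigma)<1/2$, so $Z$ converges in distribution to $\binomial(s_-,1/(2+3\sigma))$. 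Since this limit law is strictly stochastically dominated by the symmetric $\binomial(s_-,1/2)$ and $K\ge\ceil{(s_-+1)/2}$, I would conclude $\lim_{\threshold}P(Z\ge K)=P(\binomial(s_-,1/(2+3\sigma))\ge K)<P(\binomial(s_-,1/2)\ge K)\le 1/2$, hence $P(Z\ge K)\le 1/2$ once $\threshold$ is large; taking the maximum of the finitely many thresholds on $\threshold$ finishes it.

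The delicate point is the small-$s_-$ regime: the deterministic margin $K-E\,Z$ is only of order $s_-$, hence possibly less than $1$ (indeed less than $1/2$ when $\sigma$ is small), so no crude concentration estimate applies; when $s_-=1$ and $s_+=\threshold+1$ the variable $Z$ is literally Bernoulli with parameter $(\threshold+1)/d$, and $P(Z\ge K)=(\threshold+1)/((2+3\sigma)\threshold)$ drops below $1/2$ only because $\sigma>0$ and $\threshold$ is large. This is exactly where the hypothesis \eqref{eq:hypercube-2} is used, and any correct argument has to isolate such boundary cases rather than rely on a single tail inequality.
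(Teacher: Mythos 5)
Your proof is correct, and while it shares the paper's overall architecture (a case split on the size of $s_-$, with a Hoeffding-type tail bound for the hypergeometric in one regime), the two halves are executed quite differently. The paper splits at $s_- = \sigma\threshold$: for $s_- < \sigma\threshold$ it uses an exact reflection argument, pairing the term $\binom{s_-}{a}\binom{d-s_-}{s_+-a}$ with $\binom{s_-}{a}\binom{d-s_-}{s_+-s_-+a}$ to get $P(Z \ge K) \le P(Z < K)$ directly and uniformly in $\threshold$, and for $\sigma\threshold \le s_- \le \threshold$ it applies the multiplicative (relative-entropy) form of Hoeffding's inequality, which is where it needs $\threshold$ large. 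You instead split at a constant depending only on $\sigma$: above that constant, your deterministic margin $K - E\,Z \ge c_\sigma s_-$ (a clean consequence of $(s_- - \threshold)(s_+ - \threshold) \le 0$, more transparent than the corresponding chain of inequalities in the paper) feeds into the additive bound $\exp(-2a^2/s_-)$ and works for every $\threshold$; below it, you handle the finitely many pairs $(s_-, r)$ via the hypergeometric-to-binomial limit and a strict stochastic comparison with $\binomial(s_-, 1/2)$, and that soft limit is where your proof consumes the hypothesis that $\threshold$ is large. The trade-off: your large-$s_-$ regime is quantitative and uniform in $\threshold$ where the paper's is not, but your small-$s_-$ regime is non-quantitative (no explicit threshold on $\threshold$) where the paper's symmetry argument is exact; you also correctly isolate the boundary cases such as $s_- = 1$, $s_+ = \threshold + 1$, where the constant $1/2$ is nearly attained and no single concentration inequality can close the argument.
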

\begin{proof}
 The proof is made challenging by the fact that there is no explicit expression for the cumulative distribution function of the hypergeometric random variable
 and the idea is to use a combination of symmetry arguments and large deviation estimates.
 Symmetry is used to prove the result when~$s_-$ is small while large deviation estimates are used for larger values.
 Note that the result is trivial when~$s_+ > s_- + \threshold$ since in this case the sum in the statement of the lemma is empty so equal to zero.
 To prove the result when the sum is nonempty, we distinguish two cases. \vspace*{5pt} \\
{\bf Small active piles} -- Assume that~$s_- < \sigma \threshold$. Then,
\begin{equation}
\label{eq:hypergeometric-1}
  \begin{array}{rcl}
    s_+ & \leq & s_- + \threshold < (1 + \sigma) \,\threshold \ = \ (1/2)(d - \sigma \threshold) \ < \ (1/2)(d - s_-) \vspace*{3pt} \\
      K & \geq & (1/2)(s_- + s_+ - \threshold) \ > \ s_- / 2 \ > \ s_- - K \end{array}
\end{equation}
 from which it follows that
\begin{equation}
\label{eq:hypergeometric-2}
  {s_- \choose a}{d - s_- \choose s_+ - a} \ \leq \ {s_- \choose a}{d - s_- \choose s_+ - s_- + a} \quad \hbox{for all} \quad K \leq a \leq s_-.
\end{equation}
 Using~\eqref{eq:hypergeometric-2} and again the second part of~\eqref{eq:hypergeometric-1}, we deduce that
 $$ \begin{array}{rcl}
      h (s_+) \,P \,(Z \geq K) & = & \displaystyle \sum_{a = K}^{s_-} {s_- \choose a}{d - s_- \choose s_+ - a}
                               \ \leq \ \displaystyle \sum_{a = K}^{s_-} {s_- \choose a}{d - s_- \choose s_+ - s_- + a} \vspace*{4pt} \\
                               & = & \displaystyle \sum_{a = 0}^{s_- - K} {s_- \choose s_- - a}{d - s_- \choose s_+ - a}
                               \ \leq \ \displaystyle \sum_{a = 0}^{K - 1} {s_- \choose a}{d - s_- \choose s_+ - a}. \end{array} $$
 In particular, we have~$P \,(Z \geq K) \leq P \,(Z < K)$, which gives the result. \vspace*{5pt} \\
{\bf Larger active piles} -- Assume that~$\sigma \threshold \leq s_- \leq \threshold$.
 In this case, the result is a consequence of the following large deviation estimates for the hypergeometric random variable:
\begin{equation}
\label{eq:hypergeometric-3}
  P \,\bigg(Z \geq \bigg(\frac{s_-}{d} + \ep \bigg) \,s_+ \bigg) \ \leq \ \bigg(\bigg(\frac{s_-}{s_- + \ep d} \bigg)^{s_- / d + \ep} \bigg(\frac{d - s_-}{d - s_- - \ep d} \bigg)^{1 - s_- / d - \ep} \bigg)^{s_+}
\end{equation}
 for all~$0 < \ep < 1 - s_- / d$, that can be found in~\cite{hoeffding_1963}.
 Note that
 $$ \begin{array}{rcl}
      d \,(s_+ + s_- - \threshold) - 2 s_+ \,s_- & = & (d - 2 s_-) \,s_+ + d \,(s_- - \threshold) \vspace*{3pt} \\
                                                 & \geq & (d - 2 s_-)(\threshold + 1) + d \,(s_- - \threshold) \ \geq \ (d - 2 \threshold) \,s_- \vspace*{3pt} \\
                                                 & = & 3 \sigma \threshold s_- \ = \ (3 \sigma \threshold / 2 s_+)(2 s_+ \,s_-) \ \geq \ (3 \sigma / 4)(2 s_+ \,s_-) \end{array} $$
 for all~$\threshold < s_+ \leq 2 \threshold$.
 It follows that
 $$ K \ \geq \ \frac{s_+ + s_- - \threshold}{2} \ \geq \ \bigg(1 + \frac{3 \sigma}{4} \bigg) \,\frac{s_+ \,s_-}{d} \ = \ \bigg(\frac{s_-}{d} + \frac{3 \sigma s_-}{4d} \bigg) \,s_+ \ \geq \ \bigg(\frac{s_-}{d} + \frac{\sigma^2}{3} \bigg) \,s_+ $$
 which, together with~\eqref{eq:hypergeometric-3} for~$\ep = \sigma^2 / 3$, gives
 $$ \begin{array}{rcl}
      P \,(Z \geq K) & \leq &
        \displaystyle P \,\bigg(Z \geq \bigg(\frac{s_-}{d} + \ep \bigg) \,s_+ \bigg) \ \leq \ \bigg(\frac{s_-}{s_- + \ep d} \bigg)^{s_+ s_- / d} \vspace*{8pt} \\ & \leq &
        \displaystyle \bigg(\frac{3 s_-}{3 s_- + \sigma^2 d} \bigg)^{s_+ s_- / d} \leq \ \bigg(\frac{3}{3 + 2 \sigma^2} \bigg)^{(\sigma / 3) \,s_+} \leq \ \bigg(\frac{3}{3 + 2 \sigma^2} \bigg)^{(\sigma / 3) \,\threshold}. \end{array} $$
 Since this tends to zero as~$\threshold \to \infty$, the proof is complete.
\end{proof} \\ \\
 It directly follows from the lemma that
 $$ \begin{array}{l}
      p_2 \ = \ \max \,\{\sum_{s : \ceil{s / \threshold} = 1} f (s_-, s_+, s) / h (s_+) : \ceil{s_- / \threshold} = 1 \ \hbox{and} \ \ceil{s_+ / \threshold} = 2 \} \ \leq \ 1/2. \end{array} $$
 This, together with~\eqref{eq:common} and~$p_3 \leq 1$ and~$q_2 \geq 0$, implies that
 $$ \begin{array}{rcl}
     S_{\reg} (\OO, \threshold) & \geq & - \ h (1) - \cdots - h (d) + (1/p_2) \,h (\threshold + 1) + \cdots + (1/p_2) \,h (d) \vspace*{3pt} \\
                                   & \geq & - \ h (1) - \cdots - h (d) +  2 \,h (\threshold + 1) + \cdots + 2 \,h (d) \vspace*{3pt} \\
                                   & = & - \ h (1) - \cdots - h (\threshold) +  h (\threshold + 1) + \cdots + h (d). \end{array} $$
 Finally, using again~\eqref{eq:hypercube-1} and the fact that~$d > 2 \threshold$, we deduce that
 $$ \begin{array}{rcl}
     S_{\reg} (\OO, \threshold) & \geq & - \ h (1) - \cdots - h (\threshold) +  h (\threshold + 1) + \cdots + h (d) \vspace*{3pt} \\
                                   & \geq & h (d - 1) - h (1) + h (d - 2) - h (2) + \cdots + h (d - \threshold) - h (\threshold) + h (d) \vspace*{3pt} \\
                                   & = & h (d) \ = \ 1 \ > \ 0. \end{array} $$
 The corollary follows once more from Theorem~\ref{th:dist-reg}.
\end{demo}



\begin{thebibliography}{10}

\bibitem{adamopoulos_scarlatos_2012}
 Adamopoulos, A. and Scarlatos, S. (2012). Emulation and complementarity in one-dimensional alternatives of the Axelrod model with binary features. \emph{Complexity} \textbf{17} 43--49.

\bibitem{arratia_1983}
 Arratia, R. (1983). Site recurrence for annihilating random walks on~${\bf Z}_{d}$. \emph{Ann. Probab.} \textbf{11} 706--713.

\bibitem{axelrod_1997}
 Axelrod, R. (1997). The dissemination of culture: a model with local convergence and global polarization. \emph{J. Conflict Resolut.} \textbf{41} 203--226.

\bibitem{boudourides_scarlatos_2005}
 Boudourides, M. A. and Scarlatos, S. (2005). Choice polarization on a social influence network. In Proceedings of Polarization and Conflict 2nd Annual Summer Meeting, PAC Research Group, Konstanz.

\bibitem{bramson_griffeath_1989}
 Bramson, M. and Griffeath, D. (1989). Flux and fixation in cyclic particle systems. \emph{Ann. Probab.} \textbf{17} 26--45.

\bibitem{clifford_sudbury_1973}
 Clifford, P. and Sudbury, A. (1973). A model for spatial conflict. \emph{Biometrika} \textbf{60} 581--588.

\bibitem{deffuant_al_2000}
 Deffuant, G., Neau, D., Amblard, F. and Weisbuch, G. (2000). Mixing beliefs among interacting agents. \emph{Adv. Compl. Sys.} \textbf{3} 87--98.

\bibitem{harris_1972}
 Harris, T. E. (1972). Nearest neighbor Markov interaction processes on multidimensional lattices. \emph{Adv. Math.} \textbf{9} 66--89.

\bibitem{hoeffding_1963}
 Hoeffding, W. (1963). Probability inequalities for sums of bounded random variables. \emph{J. Amer. Stat. Assoc.} \textbf{58} 13--30.

\bibitem{holley_liggett_1975}
 Holley, R. A. and Liggett, T. M. (1975). Ergodic theorems for weakly interacting systems and the voter model. \emph{Ann. Probab.} \textbf{3} 643--663.

\bibitem{itoh_etal_1998}
 Itoh, Y.,  Mallows, C. and  Shepp, L. (1998). Explicit sufficient invariants for an interacting particle system. \emph{J. Appl. Probab.} \textbf{35} 633--641.

\bibitem{lanchier_2010}
 Lanchier, N. (2010). Opinion dynamics with confidence threshold: An alternative to the Axelrod model. \emph{ALEA Lat. Am. J. Probab. Math. Stat.} \textbf{7} 1--18.

\bibitem{lanchier_2012}
 Lanchier, N. (2012). The Axelrod model for the dissemination of culture revisited. \emph{Ann. Appl. Probab.} \textbf{22} 860--880.

\bibitem{lanchier_moisson_2014}
 Lanchier, N. and Moisson, P.-H. (2014). Fixation results for the two-feature Axelrod model with a variable number of opinions. Available as arXiv:1407.6289.

\bibitem{lanchier_scarlatos_2013}
 Lanchier, N. and Scarlatos, S. (2013). Fixation in the one-dimensional Axelrod model. \emph{Ann. Appl. Probab.} \textbf{23}  2538--2559.

\bibitem{lanchier_scarlatos_2014}
 Lanchier, N. and Scarlatos, S. (2014). Clustering and coexistence in the one-dimensional vectorial Deffuant model. \emph{ALEA Lat. Am. J. Probab. Math. Stat.} \textbf{11} 541--564.

\bibitem{lanchier_schweinsberg_2012}
 Lanchier, N. and Schweinsberg, J. (2012). Consensus in the two-state Axelrod model. \emph{Stochastic Process. Appl.} \textbf{122} 3701--3717.

\bibitem{scarlatos_2013}
 Scarlatos, S. (2013). \textsl{Voter models with confidence parameter}. Ph.D. dissertation (in Greek), University of Patras, Greece 2013.

\bibitem{vazquez_krapivsky_redner_2003}
 V\'azquez, F., Krapivsky, P. L. and Redner, S. (2003). Constrained opinion dynamics: freezing and slow evolution. \emph{J. Phys. A} \textbf{36}  L61--L68.

\end{thebibliography}
\end{document}